\documentclass[12pt]{article}
\usepackage[utf8]{inputenc}
\usepackage{amsmath}
\usepackage{amssymb}
\usepackage[margin = 1 in]{geometry}
\usepackage{graphicx}
\usepackage{amsthm}
\usepackage{mathtools}
\mathtoolsset{showonlyrefs}
\usepackage{hyperref}
\hypersetup{colorlinks,linkcolor={blue}}
\usepackage{theoremref}
\usepackage{mleftright}
\usepackage{extpfeil}
\usepackage{enumitem}
\usepackage{cellspace}
\newcommand{\C}{\mathbb{C}}

\newcommand{\R}{\mathbb{R}}

\renewcommand{\H}{\mathbb{H}}
\newcommand{\im}{\operatorname{im}}
\newcommand{\id}{\text{id}}
\newcommand{\sgn}{\operatorname{sgn}}

\newcommand{\lspan}{\operatorname{span}}

\theoremstyle{definition}
\newtheorem{definition}{Definition}[section]

\theoremstyle{definition}
\newtheorem{fact}[definition]{Fact}

\theoremstyle{definition}
\newtheorem{example}[definition]{Example}

\theoremstyle{plain}
\newtheorem{theorem}[definition]{Theorem}

\theoremstyle{plain}
\newtheorem{proposition}[definition]{Proposition}

\theoremstyle{plain}
\newtheorem{lemma}[definition]{Lemma}

\theoremstyle{plain}
\newtheorem{corollary}[definition]{Corollary}

\theoremstyle{plain}

\theoremstyle{remark}
\newtheorem{remark}[definition]{Remark}

\theoremstyle{definition}
\newtheorem*{definition*}{Definition}

\theoremstyle{plain}
\newtheorem*{theorem*}{Theorem}

\theoremstyle{remark}
\newtheorem*{remark*}{Remark}

\newtheoremstyle{definitioncolon}%
  {}{}{}{}{\bfseries}{:}{ }{}
\theoremstyle{definitioncolon}

\title{On extending results of Gluck and Warner on fibrations of spheres by great subspheres}
\author{Eric Yu}
\date{\ifcase\month
  \or January\or February\or March\or April\or May\or June%
  \or July\or August\or September\or October\or November\or December\fi
  \space \the\year}

\begin{document}

\large\newlength{\oldparskip}\setlength\oldparskip{\parskip}\parskip=.3in
\thispagestyle{empty}
\vspace*{0 cm}
\begin{center}
ON EXTENDING RESULTS OF GLUCK AND WARNER ON FIBRATIONS OF SPHERES BY GREAT SUBSPHERES

Eric Yu

A THESIS

in

Mathematics

Presented to the Faculties of the University of
Pennsylvania 

in 

Partial
Fulfillment of the Requirements for the 

Degree of Master of
Arts

2026
\end{center}

\noindent Supervisor of Thesis \\ Dennis DeTurck \\ Robert A. Fox Leadership Professor of Mathematics

\noindent Graduate Group
Chair \\ Angela Gibney \\ Presidential Professor of Mathematics

\normalsize\parskip=\oldparskip

\newpage
\pagenumbering{roman}
\setcounter{page}{2}

\section*{Acknowledgments}

Special thanks to Dr. Dennis DeTurck for his excellent mentorship and support. This thesis would not have been possible without his guidance. Thanks also to Dr. Herman Gluck for serving on my defense committee, as well as my family for supporting me in my pursuit of mathematics. 

\newpage

\begin{center}
\textbf{ABSTRACT}

\bigskip

ON EXTENDING RESULTS OF GLUCK AND WARNER ON \\ FIBRATIONS OF SPHERES BY GREAT SUBSPHERES

\bigskip

Eric Yu \\ Dennis DeTurck, Advisor
\end{center}

    In this paper, we build upon the work of Gluck and Warner who showed in 1983 that the set of positively oriented fibrations of a 3-sphere by oriented great circles is in bijection with the set of distance-decreasing maps from the 2-sphere to itself. One approach to generalizing their result to higher-dimensional spheres involves understanding when exactly two Hopf fibrations of $S^{2n-1}$ are guaranteed to agree on a fiber. We give a complete characterization of this phenomenon, and we discuss the barriers which prevent us from obtaining a fully general version of Gluck and Warner's result.

\newpage
\pagenumbering{arabic}
\setcounter{page}{1}

\section{Introduction}

In 1983, Gluck and Warner discovered a nice characterization for the set of fibrations of the 3-sphere by oriented great circles:
\begin{theorem} \thlabel{gluck_warner}
    \textnormal{\cite[Theorem A]{gluck_warner}}
    The space of fibrations of $S^3$ by oriented great circles is homeomorphic to the disjoint union of two copies of $X$, where $X$ is the space of distance-decreasing maps $S^2 \to S^2$.
\end{theorem}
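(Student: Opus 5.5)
The plan is to pass through the Grassmannian of oriented 2-planes in $\R^4$. An oriented great circle in $S^3$ is the same thing as an oriented 2-plane through the origin in $\R^4$. The Grassmannian $\widetilde{G}_2(\R^4)$ of such planes is identified with $S^2 \times S^2$ as follows. Decompose $\Lambda^2\R^4 = \Lambda^+ \oplus \Lambda^-$ into self-dual and anti-self-dual parts via the Hodge star. An oriented plane with orthonormal basis $u,v$ corresponds to the unit decomposable bivector $u\wedge v$. Decomposability means $\omega\wedge\omega = 0$, which is equivalent to $|\omega^+| = |\omega^-|$. Hence $\omega \mapsto (\sqrt2\,\omega^+, \sqrt2\,\omega^-)$ is a diffeomorphism $\widetilde{G}_2(\R^4) \cong S^2 \times S^2$.

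The key geometric fact I will establish first is a criterion for when two oriented planes $P, Q$ meet nontrivially, i.e. share a line (equivalently, their great circles intersect). The claim is that this happens exactly when $\omega_P \wedge \omega_Q = 0$. Writing $P \leftrightarrow (x,y)$ and $Q \leftrightarrow (x',y')$, this condition becomes
\begin{equation}
x\cdot x' = y\cdot y',
\end{equation}
or equivalently $d(x,x') = d(y,y')$ for the spherical distance on $S^2$. I will prove this by expanding $\omega_P\wedge\omega_Q$ using $\Lambda^+\wedge\Lambda^- = 0$ and $\alpha\wedge\beta = \pm\langle\alpha,\beta\rangle\,\mathrm{vol}$ on $\Lambda^\pm$.

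Next I describe a fibration as a subset $F \subset S^2\times S^2$ such that every point of $S^3$ lies on exactly one circle of $F$.
\begin{enumerate}
\item \emph{Disjointness.} By the criterion, pairwise disjointness means $d(x,x') \neq d(y,y')$ for distinct points of $F$.
\item \emph{Graph structure.} Both projections are continuous on $F$. I will argue that, because $F$ is a compact 2-manifold (a fibration of $S^3$ by circles has base a closed surface, here $S^2$), one of the projections is injective. Specifically, continuity of $d(x,x') - d(y,y')$ on the connected space of pairs of distinct points, which is connected since $F\cong S^2$ minus the diagonal, shows that this difference has a constant sign. If $d(y,y') < d(x,x')$ always, then $x$ determines $y$, so $F$ is the graph of a map $f$ on a subset of the first factor. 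Invariance of domain applied to the compact 2-manifold $F$ then forces that subset to be all of $S^2$, and $f$ is strictly distance-decreasing.
\item \emph{Symmetric case.} The opposite sign gives a graph over the second factor. These are the two copies of $X$.
\end{enumerate}

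For the converse, given a distance-decreasing $f$, its graph consists of pairwise disjoint circles. I must show they cover $S^3$. The map $\mathrm{graph}(f) \times S^1 \to S^3$ is an injective continuous map from a compact 3-manifold (the unit circle bundle) into $S^3$. By invariance of domain it is open, and by compactness it is closed, so it is onto. Finally, the bijection is a homeomorphism once the topologies are fixed (Hausdorff topology on fibrations, $C^0$ topology on maps), since $f \mapsto \mathrm{graph}(f)$ is continuous both ways.

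The main obstacle is the step showing that the sign of $d(x,x')-d(y,y')$ is constant and that $F$ is a closed surface. That requires a genuine topological input: the quotient of $S^3$ by a continuous fibration by great circles is a 2-sphere. My first attempt will use the continuity of the map $S^3 \to F$ sending a point to its fiber, then invoke the standard fact that such a quotient is a closed 2-manifold.
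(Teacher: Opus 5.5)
Your proposal is correct. Its forward direction is essentially the paper's argument:
\begin{itemize}
\item Your criterion $\omega_P\wedge\omega_Q=0$ is the paper's condition $\langle\omega_P,\omega_{Q^\perp}\rangle=0$, because $\omega_P\wedge\omega_Q=\langle\omega_P,*\omega_Q\rangle\,\mathrm{Vol}$. From it you reach the same equal-angle test as \thref{theta_nonequal}.
\item Your Pl\"ucker-relation description of the image replaces the paper's Darboux-normal-form computation of $\langle\alpha,*\alpha\rangle$.
\item Your remaining steps are exactly the forward half of \thref{distance_decreasing}: the constant sign on pairs of distinct fibers, the fact $M_F\cong S^2$ (\thref{fibrations_are_spheres}, which rests on the same ``the base is a closed surface'' input you flag), and the fact that a continuous injection $S^2\to S^2$ is onto.
\end{itemize}

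Where you genuinely diverge is the covering step of the converse. The paper argues pointwise. The planes through a fixed $p$ form the graph of an isometry $g$ (\thref{planes_containing_p}). The strictly contracting map $g^{-1}\circ f$ of the compact $S^2$ then has a fixed point, which gives a fiber through $p$. The local product structure is deferred to Gluck and Warner.

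You instead map the unit circle bundle $E=\{(P,v): P\in\mathrm{graph}(f),\ v\in P,\ \|v\|=1\}$ injectively into $S^3$ and invoke invariance of domain. This works, and it buys something extra. A continuous bijection from the compact $E$ onto $S^3$ is a homeomorphism, so $S^3\to\mathrm{graph}(f)$ is automatically a circle bundle. That settles the local triviality the paper leaves to the reference.

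The paper's route is more elementary, needing neither invariance of domain nor the tautological bundle. It also describes the circles through a point as the graph of an isometry, a structure that reappears in Section~\ref{motivation} through the maps $\psi_{p\pm}$.

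Two small remarks. First, do not write the domain as $\mathrm{graph}(f)\times S^1$: the tautological circle bundle is not a product, and your own argument shows its total space is $S^3$. Second, for the final homeomorphism onto a \emph{disjoint} union, you should also note that the two classes are open. One way is that the sign of $\theta_+-\theta_-$ at a single pair of fibers is stable under small perturbation; the paper also passes over this point.
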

For the problem of classifying fibrations of spheres by great subspheres, this theorem covers the simplest nontrivial case: Since any two great circles of $S^2$ must intersect, 3 is the smallest value of $n>1$ for which $S^n$ could possibly admit a fibration by great circles. And to see that such a fibration exists, we look no further than the one famously discovered by Hopf in 1931 \cite{hopf}:
\begin{definition}[Standard Hopf fibration]\thlabel{standard_hopf}
    View $S^{2n-1}$ as the unit sphere in $\R^{2n} \cong \C^{n}$, and view $S^1$ as the complex unit circle oriented counterclockwise. $S^1$ acts on $S^{2n-1}$ via multiplication. We define the standard Hopf fibration to be the fibration of $S^{2n-1}$ whose fibers are the oriented great circle orbits of this action.
\end{definition}
\begin{definition}[Hopf fibration]\thlabel{transformation_sign}
    Let $H$ be the standard Hopf fibration of $S^{2n-1}$ in $\R^{2n}$. We define a Hopf fibration to be a fibration of the form $T(H)$, where $T: \R^{2n} \to \R^{2n}$ is an orthogonal linear transformation. If $\det(T) > 0$ then we say the Hopf fibration has positive sign. If $\det(T) < 0$ then we say it has negative sign. 
\end{definition}

The purpose of this paper is to document an attempt to generalize Gluck and Warner's result to higher-dimensional spheres. In Section~\ref{gw_argument}, we summarize the relevant parts of Gluck and Warner's argument, and in Section~\ref{motivation} we explain how we might hope to generalize it. In Section~\ref{hopf_linear}, we state the correspondence between Hopf fibrations and orthogonal complex structures, which we use in Section~\ref{hopf_agreement} to establish our main result:
\begin{theorem}\thlabel{main_result}
    Let $H_1$ and $H_2$ be two Hopf fibrations on $S^{2n-1}$, and suppose one of the following holds:
    \begin{enumerate}
        \item $H_1$ and $H_2$ have opposite sign and $n$ is even.
        \item $H_1$ and $H_2$ have the same sign and $n$ is odd.
    \end{enumerate}
    Then the fibrations $H_1$ and $H_2$ share an oriented fiber.
\end{theorem}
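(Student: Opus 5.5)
Translate the problem into linear algebra via orthogonal complex structures. A Hopf fibration $T(H)$ corresponds to $J = T J_0 T^{-1}$, where $J_0$ is multiplication by $i$ on $\C^n \cong \R^{2n}$. Then $J$ is orthogonal with $J^2 = -I$, and the oriented fiber through $x$ is the circle $t \mapsto \cos t\, x + \sin t\, Jx$. Consequently $H_1$ and $H_2$, with structures $J_1, J_2$, share an oriented fiber exactly when some unit vector $x$ satisfies $J_1 x = J_2 x$, i.e.\ when $\ker(J_1 - J_2) \neq 0$. Indeed, if $J_1x = J_2x$ then the two oriented circles through $x$ coincide. Conversely, a shared oriented fiber through $x$ has the same unit tangent at $x$, which forces $J_1 x = J_2 x$. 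This is the correspondence of Section~\ref{hopf_linear}, and I would quote it.

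It therefore suffices to show $\det(J_1 - J_2) = 0$ in cases 1 and 2. Since $J_1$ is orthogonal with $J_1^{-1} = -J_1$,
\[
\det(J_1 - J_2) = \det(J_1)\det(I - J_1^{-1}J_2) = \det(J_1)\det(I + J_1 J_2).
\]
Set $A = J_1 J_2 \in O(2n)$. It is enough to show $A$ has eigenvalue $-1$. The sign data enters through $\det A$ and through the relation between $A$ and its inverse. We have $\det J_i = 1$, since $J_i$ is conjugate to $J_0$, which has determinant $1$. So $\det A = 1$ always, and $\det A$ alone cannot distinguish the cases. Instead, use that conjugating $J_0$ by an orientation-reversing map such as complex conjugation gives $-J_0$.

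The sign of $T$ is then encoded as follows. Write $J_1 = T_1 J_0 T_1^{-1}$. Then $J_2$ is conjugate to $J_1$ by an element of $SO(2n)$ if the signs agree, and conjugate to $-J_1$ by an element of $SO(2n)$ if they differ. This is because $SO(2n)$-orbits of complex structures correspond to orientations, and $J$ and $-J$ induce the same orientation iff $n$ is even, since $\det$ of conjugation on $\C^n$ is $(-1)^n$. So in both cases 1 and 2, $J_1$ and $J_2$ induce \emph{opposite} orientations of $\R^{2n}$. The orientation induced by $J$ is the one given by a basis $e_1, Je_1, \dots, e_n, Je_n$, so this step is a pleasant bookkeeping check.

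The main step, and the real obstacle, is to show that two orthogonal complex structures inducing opposite orientations agree on some unit vector. Replacing $J_2$ by $-J_2$ gives an equivalent claim: if $J_1$ and $J_2$ induce opposite orientations, then there is $x$ with $J_1 x = -J_2 x$, i.e.\ $A = J_1J_2$ has eigenvalue $1$. One of the two phrasings should be the right one, and I will fix the signs carefully. Here is my approach. The map $A$ satisfies $J_1 A J_1^{-1} = J_2 J_1 = A^{-1}$. So $J_1$ pairs the eigenspaces of $A$ for $\lambda$ and $\bar\lambda$, and the eigenvalues $\pm1$ each have even multiplicity. Decompose $\R^{2n}$ orthogonally into $A$-invariant, $J_1$- and $J_2$-invariant pieces. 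These are the eigenspaces for $1$ and $-1$, together with, for each $\lambda = e^{i\theta}$ with $0<\theta<\pi$, the real $4k$-dimensional space on which $J_1$ and $J_2$ generate a quaternionic-like structure. On each such generic block, $J_1$ and $J_2$ are homotopic through complex structures on that block (rotate $\theta$ to $0$), hence they induce the same orientation there. The same holds on the $\lambda = 1$ block, where $J_2 = -J_1$ up to sign convention, with dimension divisible by $4$, so the orientations agree there too. So if the orientations differ globally, the remaining block, where $J_1 = J_2$, must be nonzero, and that block is exactly $\ker(J_1 - J_2)$. Verifying the orientation claim on the generic blocks, for example by exhibiting an explicit normal form with $J_1, J_2$ as $2\times2$ block rotations, is the part I expect to require the most care.
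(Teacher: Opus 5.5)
Your reduction to $\ker(J_1-J_2)\neq 0$ is correct, but the signs go wrong at the two places that matter, and as written the argument does not prove the theorem.

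\textbf{The two sign errors.} First, in case 2 the structures $J_1,J_2$ induce the \emph{same} orientation, since equal signs of $H_1,H_2$ mean equal signs of $J_1,J_2$. What holds in both cases is that $J_1$ and $-J_2$ induce opposite orientations, because $-J_2$ carries $(-1)^n$ times the orientation of $J_2$. For the same reason, the second half of your $SO(2n)$-orbit sentence holds only for odd $n$.

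Second, your ``main step'' is false as stated. For odd $n$, $J$ and $-J$ induce opposite orientations, yet $\ker(J-(-J))=\ker(2J)=0$. Its proposed proof also fails at the decisive point. The block $\ker(J_1+J_2)$ (eigenvalue $1$ of $A$), on which $J_2=-J_1$, need not have dimension divisible by $4$: on $\R^2$ with $J_2=-J_1$ it is all of $\R^2$. Divisibility by $4$ holds on the generic blocks, where $J_1,J_2$ generate a quaternionic action, but not on this block, where they commute. Moreover, the block $\ker(J_1-J_2)$, on which $J_1=J_2$, can never carry an orientation discrepancy. So ``the orientations differ globally, hence the $J_1=J_2$ block is nonzero'' is a non sequitur.

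\textbf{How to repair it.} The decomposition itself is sound, and the generic-block step you worried about is easy. On the eigenspace of the symmetric operator $J_1J_2+J_2J_1$ for eigenvalue $-2\cos\phi$, with $0<\phi<\pi$, the operator $L=(J_2-\cos\phi\,J_1)/\sin\phi$ is an orthogonal complex structure anticommuting with $J_1$. The path $\cos t\,J_1+\sin t\,L$, $0\le t\le\phi$, then joins $J_1$ to $J_2$ through orthogonal complex structures.

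With correct bookkeeping, $J_1$ and $J_2$ induce the same orientation on every generic block and on $\ker(J_1-J_2)$. On $\ker(J_1+J_2)$, with $2m=\dim\ker(J_1+J_2)$, their orientations differ by $(-1)^m$. Hence $\sgn(J_1)\sgn(J_2)=(-1)^m$. Opposite signs force $m$ odd, which gives $\ker(J_1+J_2)\neq 0$ (the paper's \thref{oppositeCircles}) and in fact the mod-$4$ statement of \thref{main_result_corollary}. Applying this to the pair $J_1,-J_2$, which has opposite signs in both cases 1 and 2, yields $\ker(J_1-J_2)\neq 0$.

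Once repaired, your route is a genuinely different, non-inductive proof. It replaces the paper's analysis of critical values of $v\mapsto\langle Jv,Kv\rangle$, the normal form of \thref{basisChoice}, and the induction splitting off a $4$-dimensional invariant subspace with one simultaneous block decomposition plus an orientation count. As submitted, however, it rests on a false lemma and places the orientation defect on the wrong block.
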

We prove this along with \thref{main_result_corollary}, which is slightly more general. In Section~\ref{next_steps}, we discuss possible next steps. Finally, in Section~\ref{S3_fibrations} we consider the case of fibrations by great subspheres of dimension higher than 1 and give examples which illustrate why this is less well-behaved than the case of fibrations by great circles. 

\section{Gluck and Warner's argument}\label{gw_argument}

The purpose of this section is to summarize the essential parts of Gluck and Warner's proof of \thref{gluck_warner}. 

Viewing $S^3$ as the unit sphere centered at the origin in $\R^4$, we notice that every oriented great circle of $S^3$ determines a unique oriented 2-plane in $\R^4$. This gives us a bijection between the set of oriented great circles in $S^3$ and the Grassmannian of oriented $2$-planes in $\R^4$, which we denote $G_2^+(\R^4)$.

\subsection{The homeomorphism $G_2^+(\R^4) \to S^2 \times S^2$}

It's well-known that $G_2^+(\R^4)$ is homeomorphic to $S^2 \times S^2$. Here we reproduce the explicit construction of this homeomorphism used in \cite{gluck_warner}. We write $\Lambda^k\R^4$ to denote the set of alternating multilinear maps $(\R^4)^k \to \R$, and we recall that the wedge product $\wedge: \Lambda^1\R^4 \times \Lambda^1 \R^4 \to \Lambda^2 \R^4$ is given by 
\begin{equation}
    [\alpha \wedge \beta](a, b) = \det\begin{bmatrix}
    \alpha(a) & \alpha(b) \\
    \beta(a) & \beta(b)
\end{bmatrix}.
\end{equation}

\begin{definition}[The map $\omega: G_2^+(\R^4) \to \Lambda^2 \R^4$]\thlabel{omega}
    Given an oriented 2-plane $P \subseteq \R^4$ passing through the origin, we define $\omega_P \in \Lambda^2 \R^4$ to equal $u^* \wedge v^*$, where $(u, v)$ is an orthonormal basis for $P$ with positive orientation and $u^* \in \Lambda^1\R^4$ is the map $[w \mapsto \langle w, u\rangle]$. In other words, $\omega_P$ is the function which maps a pair of vectors $a, b \in \R^4$ to the signed area of the parallelogram spanned by the orthogonal projections of $a$ and $b$ onto $P$. With this description, it is clear that $\omega$ is well-defined and injective.
    \begin{center}
        \includegraphics[width = 5.4 cm]{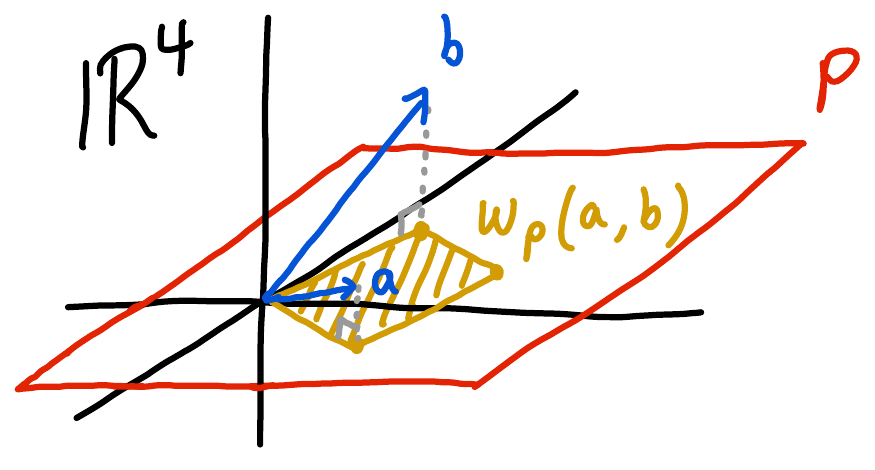}
    \end{center}
\end{definition}
In \thref{image_of_omega} we will see that $\im(\omega) \cong S^2 \times S^2$, and so $\omega$ is our desired homeomorphism.

\begin{definition}[Inner product on $\Lambda^2\R^4$]\thlabel{inner_product}
    We define the inner product on $\Lambda^2\R^4$ to be the unique inner product such that for all planes $P, Q \in G_2^+(\R^4)$ we have $\langle \omega_P, \omega_Q \rangle$ equals the determinant of the orthogonal projection map $P \to Q$. For a proof of existence and uniqueness, see Section~\ref{induced_inner_product}.
\end{definition}
\begin{proposition}\thlabel{norm_one}
    Let $P \in G_2^+(\R^4)$. Then $\langle\omega_P, \omega_P \rangle = 1$.
\end{proposition}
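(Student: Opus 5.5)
This follows directly from \thref{inner_product}, by taking $Q = P$ in the defining property of the inner product on $\Lambda^2\R^4$. That property says $\langle \omega_P, \omega_Q\rangle$ equals the determinant of the orthogonal projection map $P \to Q$. So the plan is to identify the orthogonal projection $P \to P$ as a linear map between oriented planes and compute its determinant.

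First, the orthogonal projection of $\R^4$ onto $P$ fixes every vector of $P$. Its restriction to $P$, viewed as a map $P \to P$, is therefore the identity map $\id_P$.

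Second, we need to make sense of the determinant. The determinant of a linear map between two oriented 2-planes is computed using positively oriented orthonormal bases of the source and target; this is the convention implicit in \thref{inner_product}, and it is what gives the determinant a well-defined sign. Take the positively oriented orthonormal basis $(u,v)$ of $P$ used in \thref{omega}, and use it for both source and target. The matrix of $\id_P$ is then the $2\times 2$ identity, with determinant $1$. Hence $\langle \omega_P, \omega_P\rangle = 1$.

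There is no real obstacle. The only point needing care is that the determinant is independent of the chosen bases. Changing the positively oriented orthonormal basis on the source multiplies the matrix by an element of $SO(2)$ on one side, and changing it on the target multiplies by an element of $SO(2)$ on the other side. Both have determinant $1$, so the determinant is unchanged.

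As a cross-check, we can compute directly from the basis description in Section~\ref{induced_inner_product}, where $\{e_i^*\wedge e_j^*\}_{i<j}$ is orthonormal. Writing $\omega_P = u^*\wedge v^*$, we have $\langle u^*\wedge v^*, u^*\wedge v^*\rangle = \det\begin{bmatrix}\langle u,u\rangle & \langle u,v\rangle\\ \langle v,u\rangle & \langle v,v\rangle\end{bmatrix} = 1$ by orthonormality of $(u,v)$.
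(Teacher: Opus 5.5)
Your proposal is correct and takes the same approach as the paper, whose proof is the single observation that the orthogonal projection $P \to P$ is the identity and so has determinant $1$. Your remarks on the basis-independence of the determinant and the Gram-determinant cross-check are valid but not needed.
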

\begin{proof}
    The orthogonal projection map $P \to P$ is the identity, which has determinant 1.
\end{proof}

\begin{definition}[Hodge star]\thlabel{hodge_star}
    We recall that the Hodge star operator $*: \Lambda^2 \R^4 \to \Lambda^2 \R^4$ is defined to be the unique linear operator satisfying $*\omega_P = \omega_{P^\perp}$, where $P^\perp$ is the orthogonal complement of $P$ oriented so that if $(p_1,p_2)$ is a positive basis for $P$ and $(p_3, p_4)$ is a positive basis for $P^\perp$, then $(p_1,p_2, p_3, p_4)$ is a positive basis for $\R^4$. For a proof of existence and uniqueness, see Section~\ref{hodge_star_existence_uniqueness}.
\end{definition}

\begin{lemma}\thlabel{nontrivial_intersection}
\textnormal{\cite[Proposition 4.5]{gluck_warner}}
    Two planes $P$ and $Q$ in $G_2^+(\R^4)$ intersect nontrivially if and only if $\langle \omega_P, \omega_{Q^\perp}\rangle = 0$.
\end{lemma}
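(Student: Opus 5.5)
Plan: compute $\langle \omega_P, \omega_{Q^\perp}\rangle$ in well-chosen orthonormal bases. By \thref{inner_product}, this inner product is the determinant of the orthogonal projection $\pi: P \to Q^\perp$. So the claim reduces to showing that this projection is singular exactly when $P \cap Q \neq \{0\}$.

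For one direction, suppose $0 \neq v \in P \cap Q$. Then $v \perp Q^\perp$, so $\pi(v) = 0$. Hence $\pi$ has nontrivial kernel and its determinant vanishes.

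For the converse, suppose $\det \pi = 0$. The map $\pi$ goes between two $2$-dimensional spaces, so it has a nonzero kernel vector $v \in P$ with $\pi(v) = 0$. That means $v$ is orthogonal to $Q^\perp$, so $v \in (Q^\perp)^\perp = Q$. Thus $v \in P \cap Q$ and the intersection is nontrivial.

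The one point needing care is that the determinant of a map between two different oriented planes must be well-defined. It is computed using positively oriented orthonormal bases of $P$ and $Q^\perp$, and changing either basis by a rotation leaves it unchanged. Its vanishing is orientation-independent, so the orientation conventions of \thref{hodge_star} play no role here. There is no real obstacle beyond invoking \thref{inner_product} correctly. Its well-definedness is deferred to Section~\ref{induced_inner_product}, which we assume.
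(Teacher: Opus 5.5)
Your proposal is correct and follows the paper's proof essentially step for step. Both use the definition of the inner product to identify $\langle \omega_P, \omega_{Q^\perp}\rangle$ with the determinant of the orthogonal projection $P \to Q^\perp$; a nonzero vector in $P \cap Q$ lies in the kernel of that projection, and conversely a kernel vector is orthogonal to $Q^\perp$ and so lies in $(Q^\perp)^\perp = Q$.
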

\begin{proof}
    Forward direction: Assuming $P$ and $Q$ intersect nontrivially, let $x$ be a nonzero vector in $P \cap Q$. Since $x$ is orthogonal to $Q^\perp$, the orthogonal projection map $P \to Q^\perp$ maps $x$ to $0$ and therefore has determinant 0. By \thref{inner_product}, we conclude that $\langle \omega_P, \omega_{Q^\perp}\rangle = 0$.
    
    Backward direction: Assuming $\langle \omega_P, \omega_{Q^\perp} \rangle = 0$, by \thref{inner_product} we have that the orthogonal projection map $P \to Q^\perp$ has determinant 0. Therefore, there exists a nonzero vector $x \in P$ whose orthogonal projection onto $Q^\perp$ is 0. In other words, $x \in Q$. So $x$ is a nonzero vector in $P \cap Q$, meaning $P$ and $Q$ intersect nontrivially.
\end{proof}

\begin{definition}[The subspaces $E_+, E_- \leq \Lambda^2\R^4$]\thlabel{eigenspaces}
    Let $\alpha$ be a vector in $\Lambda^2 \R^4$. We say that $\alpha$ is self-dual if $*\alpha = \alpha$, and we say that $\alpha$ is anti-self-dual if $*\alpha = -\alpha$. We define $E_+$ and $E_-$ to be the linear subspaces of $\Lambda^2\R^4$ consisting of all self-dual and all anti-self-dual vectors, respectively. We have an orthogonal direct sum decomposition $\Lambda^2\R^4 = E_- \oplus E_+$. To see this, we observe that 
    \begin{equation}
        \biggl\{ \frac{e_1^* \wedge e_2^* + e_3^* \wedge e_4^*}{\sqrt{2}}, \quad \frac{e_1^* \wedge e_3^* - e_2^* \wedge e_4^*}{\sqrt{2}}, \quad \frac{e_1^* \wedge e_4^* + e_2^* \wedge e_3^*}{\sqrt{2}} \biggr\}
    \end{equation}
    is an orthonormal basis for $E_+$, and
    \begin{equation}
        \biggl\{ \frac{e_1^* \wedge e_2^* - e_3^* \wedge e_4^*}{\sqrt{2}}, \quad \frac{e_1^* \wedge e_3^* + e_2^* \wedge e_4^*}{\sqrt{2}}, \quad \frac{e_1^* \wedge e_4^* - e_2^* \wedge e_3^*}{\sqrt{2}} \biggr\}
    \end{equation}
    is an orthonormal basis for $E_-$, and the union of these bases is an orthonormal basis for $\Lambda^2\R^4$. (see Proposition~\ref{basis_of_exterior}).
\end{definition}

\begin{definition}[The maps $\pi_-: \Lambda^2\R^4 \to E_-$ and $\pi_+: \Lambda^2\R^4 \to E_+$]
    Any $\alpha \in \Lambda^2\R^4$ can be split into its self-dual and anti-self-dual components:
    \begin{equation}
        \alpha = \frac{\alpha + *\alpha}{2} + \frac{\alpha - *\alpha}{2}.
    \end{equation}
    We define $\pi_+(\alpha) = \frac{1}{2}(\alpha + *\alpha)$ to be the self-dual part of $\alpha$ and $\pi_-(\alpha) = \frac{1}{2}(\alpha - *\alpha)$ to be the anti-self-dual part of $\alpha$.
\end{definition}

\begin{theorem}\thlabel{image_of_omega}
    Let $S^2_+$ and $S^2_-$ denote the spheres of radius $\frac{1}{\sqrt{2}}$ centered at the origin in $E_+$ and $E_-$, respectively. Then the image of the map $\omega: G_2^+(\R^4) \to \Lambda^2\R^4$ is $S^2_- \times S^2_+$.
\end{theorem}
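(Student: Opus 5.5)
We prove the two inclusions $\im(\omega) \subseteq S^2_- \times S^2_+$ and $S^2_- \times S^2_+ \subseteq \im(\omega)$ separately. Throughout we use the characterization of decomposable unit 2-forms: a vector $\alpha \in \Lambda^2\R^4$ lies in $\im(\omega)$ if and only if $\alpha\wedge\alpha = 0$ and $\langle\alpha,\alpha\rangle = 1$. Equivalently, in terms of the Hodge star and the inner product of \thref{inner_product}, the conditions are $\langle \alpha, *\alpha\rangle = 0$ and $\langle \alpha,\alpha\rangle = 1$. The equivalence of $\alpha\wedge\alpha=0$ with $\langle\alpha,*\alpha\rangle=0$ is the routine identity $\alpha\wedge\beta = \langle \alpha, *\beta\rangle\, e_1^*\wedge e_2^*\wedge e_3^*\wedge e_4^*$. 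This can be checked on the orthonormal basis $\{e_i^*\wedge e_j^*\}$, using the facts about $*$ from Section~\ref{hodge_star_existence_uniqueness}.

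\emph{Forward inclusion.} Let $P\in G_2^+(\R^4)$. By \thref{norm_one}, $\langle\omega_P,\omega_P\rangle=1$. Since $P$ meets itself nontrivially, \thref{nontrivial_intersection} gives $\langle\omega_P,\omega_{P^\perp}\rangle=0$, that is, $\langle \omega_P, *\omega_P\rangle = 0$. Write $\omega_P = a_- + a_+$ with $a_\pm = \pi_\pm(\omega_P)$. The decomposition $E_-\oplus E_+$ is orthogonal, so $|a_-|^2+|a_+|^2=1$. Since $*\omega_P = a_+ - a_-$, we also get $|a_+|^2-|a_-|^2=0$. Together these give $|a_\pm| = 1/\sqrt2$, so $\omega_P\in S^2_-\times S^2_+$.

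\emph{Reverse inclusion.} This is the main obstacle: we must show that every pair $(a_-,a_+)$ with $|a_\pm|=1/\sqrt2$ actually comes from a plane. The same computation run backwards shows that $\alpha = a_-+a_+$ satisfies $\langle\alpha,\alpha\rangle=1$ and $\langle\alpha,*\alpha\rangle=0$. It therefore suffices to show that any $\alpha$ with $\alpha\wedge\alpha=0$ and $\alpha \neq 0$ is decomposable, i.e.\ $\alpha = \lambda\, u^*\wedge v^*$ for some vectors $u,v$. Once that holds, we can replace $u,v$ by an orthonormal basis of their span with the appropriate orientation, and rescaling by $\langle\alpha,\alpha\rangle = 1$ then yields $\alpha=\omega_P$.

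To prove decomposability, view $\alpha$ as a skew-symmetric $4\times 4$ matrix $A$. Its Pfaffian is proportional to the coefficient of $\alpha\wedge\alpha$, so $\alpha\wedge\alpha=0$ forces $\det A = \mathrm{Pf}(A)^2 = 0$. A skew matrix has even rank, so $A \neq 0$ with $\det A = 0$ must have rank exactly $2$. A rank-2 skew form is $u^*\wedge v^*$, where $u,v$ span the orthogonal complement of $\ker A$.

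An alternative route for the reverse inclusion is transitivity. $SO(4)$ acts on $G_2^+(\R^4)$, and through $SO(4)\to SO(3)\times SO(3)$ it acts transitively on $S^2_-\times S^2_+$. Then $\im(\omega)$ is a nonempty, closed, invariant subset of $S^2_-\times S^2_+$, hence all of it. This route needs the double cover fact, which is why I would use the Pfaffian argument first.
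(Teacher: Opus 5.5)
Your proof is correct. The forward inclusion matches the paper's: both use $\langle\omega_P,\omega_P\rangle=1$ and $\langle\omega_P,*\omega_P\rangle=0$ from \thref{nontrivial_intersection}. The reverse inclusion follows the paper's outline (the norm conditions give $\|\alpha\|=1$ and $\langle\alpha,*\alpha\rangle=0$, then decomposability) but uses a different key step.

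The paper invokes the Darboux normal form (Proposition~\ref{darboux}) to write $\alpha=a\omega_P+b\omega_Q$ with $P\cap Q=\{0\}$. It then computes $\langle\alpha,*\alpha\rangle=2ab\langle\omega_P,*\omega_Q\rangle$ and applies \thref{nontrivial_intersection} a second time: transversality of $P$ and $Q$ makes the cross term nonzero, which forces $ab=0$.

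You instead recognize $\langle\alpha,*\alpha\rangle$ as twice the Pfaffian of the skew matrix $A$. So the norm conditions give $\operatorname{Pf}(A)=0$, hence $\det A=\operatorname{Pf}(A)^2=0$, and evenness of the rank gives rank $2$ and decomposability. You could even skip wedge products: with $a_{ij}$ the entries of $A$, a direct computation in the basis $e_i^*\wedge e_j^*$ gives $\langle\alpha,*\alpha\rangle=2(a_{12}a_{34}-a_{13}a_{24}+a_{14}a_{23})$.

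What your route buys: it is purely algebraic, and it identifies $\im(\omega)$ as the Pl\"ucker quadric $\alpha\wedge\alpha=0$ intersected with the unit sphere, a description that carries over to $\Lambda^2\R^n$. What it costs: it imports facts the paper never proves, namely $\det A=\operatorname{Pf}(A)^2$, evenness of the rank of skew matrices, and the involutivity of $*$ needed to turn the appendix identity $\alpha\wedge*\beta=\langle\alpha,\beta\rangle\operatorname{Vol}$ into yours. The paper's argument instead stays within its own projection-determinant toolkit, where transversality is exactly what rules out the rank-$4$ case.

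Your transitivity alternative is also valid, given equivariance of $\omega$ and $*$ and surjectivity of $\operatorname{SO}(4)\to\operatorname{SO}(3)\times\operatorname{SO}(3)$. Closedness of the image is not even needed there.
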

\begin{proof}
    To prove the forward inclusion $\im(\omega) \subseteq S^2_- \times S^2_+$, it suffices to show that $\|\pi_-(\omega_P)\| = \|\pi_+(\omega_P)\| = \frac{1}{\sqrt{2}}$ for all $P \in G_2^+(\R^4)$.
    \begin{align}
        \|\pi_+(\omega_P)\| &= \biggl\| \frac{\omega_P + *\omega_P}{2} \biggr\| \\
        &= \frac{1}{2} \sqrt{\langle \omega_P + *\omega_P, \; \omega_P + *\omega_P \rangle} \\
        &= \frac{1}{2}\sqrt{\langle\omega_P, \omega_P\rangle + \langle *\omega_P, *\omega_P \rangle + 2\langle \omega_P, *\omega_P\rangle}.
    \end{align}
    By \thref{norm_one}, we have $\langle \omega_P, \omega_P \rangle = 1$ and $\langle*\omega_P, *\omega_P\rangle = \langle\omega_{P^\perp}, \omega_{P^\perp} \rangle = 1$. By \thref{nontrivial_intersection} we have $\langle\omega_P, *\omega_P \rangle = 0$. So our expression for $\|\pi_+(\omega_P)\|$ reduces to $\frac{1}{2}\sqrt{1 + 1 + 0} = \frac{1}{\sqrt{2}}$. The argument for $\|\pi_-(\omega_P)\|$ is identical.

    To prove the backward inclusion $S^2_- \times S^2_+ \subseteq \im(\omega)$, let $\alpha \in \Lambda^2\R^4$ be a vector for which $\|\pi_-(\alpha)\| = \|\pi_+(\alpha)\| = \frac{1}{\sqrt{2}}$. Note that since $\pi_-(\alpha)$ is orthogonal to $\pi_+(\alpha)$, we have $\|\alpha\| = 1$. Our goal is to show that $\alpha \in \im(\omega)$. By Proposition~\ref{darboux}, we have $\alpha = a\omega_P + b \omega_Q$ where $a,b$ are real numbers and $P, Q$ are planes which intersect trivially. We now simplify the expression $\langle \alpha, *\alpha \rangle$ in two different ways. On one hand, 
    \begin{align}
        \langle \alpha, *\alpha \rangle &= \langle \pi_-(\alpha) + \pi_+(\alpha), \; *\pi_-(\alpha) + *\pi_+(\alpha) \rangle \\
        &= \langle \pi_-(\alpha) + \pi_+(\alpha), \; -\pi_-(\alpha) + \pi_+(\alpha) \rangle \\
        &= -\langle\pi_-(\alpha), \pi_-(\alpha) \rangle + \langle \pi_+(\alpha), \pi_+(\alpha) \rangle \\
        &= -\frac{1}{2} + \frac{1}{2} \\
        &= 0.
    \end{align}
    On the other hand, 
    \begin{align}
        \langle \alpha, *\alpha \rangle &= \langle (a\omega_P + b \omega_Q), *(a\omega_P + b \omega_Q)\rangle
        \\ &= \underbrace{a^2\langle\omega_P, *\omega_P\rangle + b^2\langle\omega_Q, *\omega_Q \rangle}_{= 0 \text{ by \thref{nontrivial_intersection}}} + \underbrace{ab\langle \omega_P, *\omega_Q \rangle + ba \langle \omega_Q, *\omega_P \rangle}_{ = 2ab\langle \omega_P, *\omega_Q \rangle \text{ since $*$ is orthogonal}} \\
        &= 2ab\langle\omega_P, *\omega_Q \rangle.
    \end{align}
    So $2ab\langle\omega_P, *\omega_Q \rangle = 0$. By \thref{nontrivial_intersection} we have $\langle \omega_P, *\omega_Q \rangle \neq 0$, so at least one of $a$ and $b$ must be 0. Assume without loss of generality that $b=0$. We now have $\alpha = a \omega_P$. By \thref{norm_one} we have $\|\omega_P\| = 1$, so the fact that $\|\alpha\| = 1$ forces $a$ to equal $\pm 1$, meaning $\alpha = \pm\omega_P \in \im(\omega)$.
\end{proof}
Here is an illustration summarizing the current situation:
\begin{center}
    \includegraphics[width = 7.8 cm]{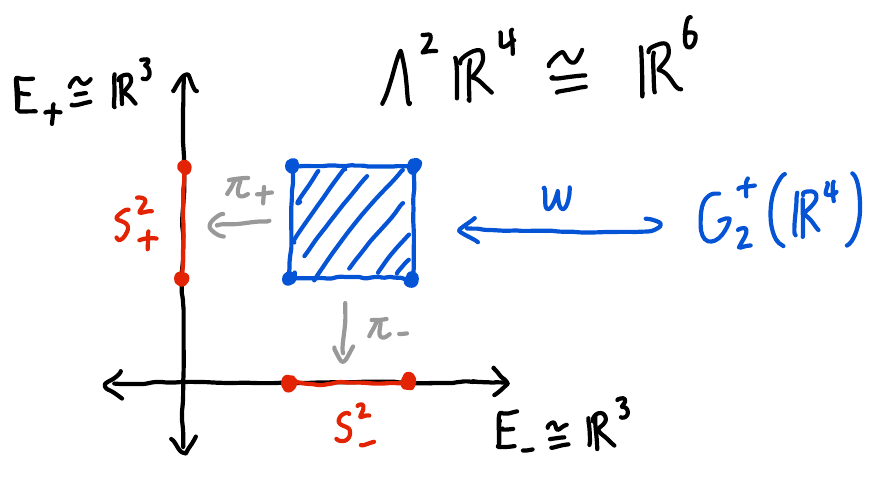}
\end{center}
\begin{corollary}
    The map $G_2^+(\R^4) \to S^2_- \times S^2_+$ given by $P \mapsto \bigl(\pi_-(\omega_P), \pi_+(\omega_P)\bigr)$ is a homeomorphism.
\end{corollary}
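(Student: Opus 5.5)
The plan is to write the map as a composition $\Phi = (\pi_-, \pi_+)\circ \omega$, where $\Phi: G_2^+(\R^4) \to S^2_- \times S^2_+$ is given by $P \mapsto \bigl(\pi_-(\omega_P), \pi_+(\omega_P)\bigr)$. Here $(\pi_-,\pi_+): \Lambda^2\R^4 \to E_- \times E_+$ is a linear isomorphism, since $\Lambda^2\R^4 = E_- \oplus E_+$ by \thref{eigenspaces}. Under this identification, \thref{image_of_omega} says exactly that $\Phi$ is well-defined and surjective onto $S^2_- \times S^2_+$. Injectivity follows because $\omega$ is injective (\thref{omega}) and $(\pi_-,\pi_+)$ is injective. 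So $\Phi$ is a continuous bijection, provided we check continuity, and the proof reduces to two topological points.

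\textbf{Continuity.} The topology on $G_2^+(\R^4)$ is the quotient topology from the Stiefel manifold of positively oriented orthonormal pairs $(u,v)$ in $\R^4$. The map $(u,v) \mapsto u^*\wedge v^*$ is polynomial in the coordinates of $u$ and $v$, hence continuous. It is constant on fibers of the quotient map, since $\omega$ is well-defined. It therefore descends to a continuous map $\omega$ on $G_2^+(\R^4)$. Composing with the continuous linear map $(\pi_-,\pi_+)$ gives continuity of $\Phi$.

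\textbf{Closedness of $\Phi$.} The space $G_2^+(\R^4)$ is compact, being a continuous image of the Stiefel manifold, which is a closed and bounded subset of $\R^8$. The target $S^2_- \times S^2_+$ is Hausdorff, being a subspace of a Euclidean space. A continuous bijection from a compact space to a Hausdorff space is closed, hence a homeomorphism.

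The main obstacle is only bookkeeping: the paper never fixes the topology on $G_2^+(\R^4)$. I would state it explicitly as the quotient topology above, or equivalently as the topology making $\omega$ an embedding. If the latter convention is adopted, the result is immediate from \thref{image_of_omega} together with the linear isomorphism $(\pi_-,\pi_+)$.
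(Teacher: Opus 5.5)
Your proof is correct and follows the same route as the paper, which treats this corollary as immediate from the injectivity of $\omega$ and \thref{image_of_omega}, combined with the splitting $\Lambda^2\R^4 = E_- \oplus E_+$. The only difference is that you spell out the continuity of $\omega$ and the compact-to-Hausdorff step, which the paper leaves implicit, since it never fixes a topology on $G_2^+(\R^4)$.
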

\subsection{Great circle fibrations as submanifolds of $G_2^+(\R^4)$}

Let $\mathcal{F}$ denote the space of all fibrations of $S^3$ by oriented great circles, which includes the Hopf fibrations. As we observed at the beginning of this section, an oriented great circle in $S^3$ may be associated with the oriented 2-plane in $\R^4$ that it sits positively inside of. This allows us to associate to each fibration in $\mathcal{F}$ a subset of the Grassmannian:
\begin{definition}[The map $M: \mathcal{F} \to \mathcal{P}(G_2^+(\R^4))$]
    Given a fibration $F$ of $S^3$ by oriented great circles, we define $M_F \subseteq G^+_2(\R^4)$ to be the set of all oriented 2-planes positively containing an oriented great circle in $F$.
\end{definition}

\begin{definition}[Sign of a fibration]\thlabel{sign}
    View $S^3$ as the unit sphere centered at the origin in $\R^4$ and fix a fibration of $S^3$ by oriented great circles. Choose a pair of unit vectors $p, q \in S^3$ not lying on the same fiber. Let $C_p$ and $C_q$ denote the fibers containing $p$ and $q$, respectively. Let $p'$ be $p$ rotated 90 degrees along $C_p$ in the positive direction, and let $q'$ be $q$ rotated 90 degrees along $C_q$ in the positive direction. We define the sign of $F$ to be the sign of the determinant of the $4 \times 4$ matrix $\begin{bmatrix}
        p & p' & q & q'
    \end{bmatrix}$. 
    
    To see that this is independent of our choice of $p$ and $q$, 
    let $X$ denote the space $\{ (p,q) \in S^3 \times S^3 \mid C_p \neq C_q \}$. The fact that $C_p \neq C_q$ tells us that 
    $\det\begin{bmatrix}
        p & p' & q & q'
    \end{bmatrix} \neq 0$ for all $(p,q) \in X$. Since $X$ is path-connected, the intermediate value theorem tells us that any two choices of $(p,q) \in X$ must have the same sign determinant. It's easy to see that for Hopf fibrations, this notion of sign agrees with the one given in \thref{transformation_sign}.
\end{definition}

\begin{definition}[The maps $\theta_+, \theta_-: G_2^+(\R^4) \times G_2^+(\R^4) \to {[0, \pi]}$]\thlabel{theta}
    Given $P, Q \in G_2^+(\R^4)$, we define $\theta_+(P,Q)$ to be the angle between the vectors $\pi_+(\omega_P)$ and $\pi_+(\omega_Q)$, and we define $\theta_-(P,Q)$ to be the angle between the vectors $\pi_-(\omega_P)$ and $\pi_-(\omega_Q)$.
    \begin{center}
        \includegraphics[width = 10 cm]{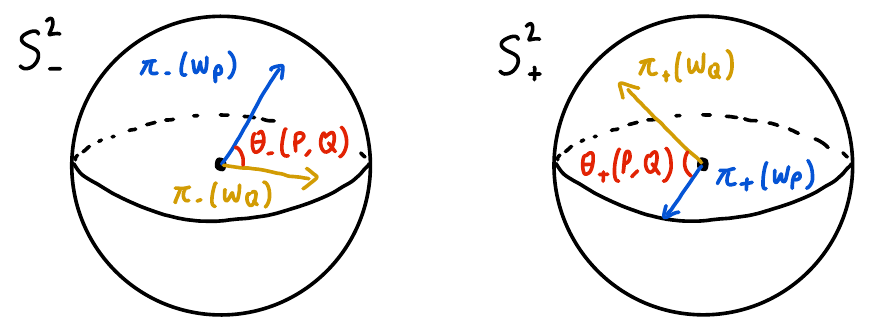}
    \end{center}
\end{definition}

\begin{lemma}\thlabel{theta_nonequal}
\textnormal{\cite[Lemma 5.4]{gluck_warner}}
    Two planes $P, Q \in G_2^+(\R^4)$ intersect nontrivially if and only if $\theta_+(P,Q) = \theta_-(P,Q)$.
\end{lemma}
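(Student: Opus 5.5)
By \thref{nontrivial_intersection}, $P$ and $Q$ intersect nontrivially if and only if $\langle \omega_P, *\omega_Q\rangle = 0$. The plan is to express this inner product in terms of the self-dual and anti-self-dual components and then read off the angles.

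First decompose both forms into their components:
\begin{equation}
    \omega_P = \pi_-(\omega_P) + \pi_+(\omega_P), \qquad \omega_Q = \pi_-(\omega_Q) + \pi_+(\omega_Q).
\end{equation}
Since $*$ acts as $+1$ on $E_+$ and $-1$ on $E_-$, we have $*\omega_Q = \pi_+(\omega_Q) - \pi_-(\omega_Q)$. The decomposition $E_- \oplus E_+$ is orthogonal (\thref{eigenspaces}), so the cross terms vanish and
\begin{equation}
    \langle \omega_P, *\omega_Q\rangle = \langle \pi_+(\omega_P), \pi_+(\omega_Q)\rangle - \langle \pi_-(\omega_P), \pi_-(\omega_Q)\rangle.
\end{equation}

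Next, \thref{image_of_omega} says every component has norm $\frac{1}{\sqrt2}$. By \thref{theta}, each inner product above is therefore $\frac12\cos$ of the corresponding angle:
\begin{equation}
    \langle \omega_P, *\omega_Q\rangle = \tfrac12\bigl(\cos\theta_+(P,Q) - \cos\theta_-(P,Q)\bigr).
\end{equation}
The angles $\theta_\pm$ lie in $[0,\pi]$, where cosine is injective. Hence the expression vanishes exactly when $\theta_+(P,Q)=\theta_-(P,Q)$, and combining with \thref{nontrivial_intersection} gives the lemma.

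There is no real obstacle here. The only points needing care are the sign bookkeeping for $*$ on $E_\pm$ and the use of the injectivity of cosine on $[0,\pi]$. Note that $\theta_\pm$ are well defined because the components are nonzero, again by \thref{image_of_omega}.
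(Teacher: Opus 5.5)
Your proof is correct and follows the paper's argument. Both reduce the claim, via \thref{nontrivial_intersection}, to the identity $\langle \omega_P, *\omega_Q\rangle = \langle \pi_+(\omega_P), \pi_+(\omega_Q)\rangle - \langle \pi_-(\omega_P), \pi_-(\omega_Q)\rangle$, and then use the constant norm $\frac{1}{\sqrt{2}}$ from \thref{image_of_omega} to equate equal angles with equal inner products; the only cosmetic difference is that you derive the identity from the orthogonal splitting $E_- \oplus E_+$ and the eigenvalues of $*$, whereas the paper expands $\pi_\pm$ directly and uses that $*$ is orthogonal.
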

\begin{proof}
    By \thref{image_of_omega}, all vectors of the form $\pi_+(\omega_P)$ or $\pi_-(\omega_P)$ have constant norm $\frac{1}{\sqrt{2}}$. Therefore, $\theta_+(P,Q) = \theta_-(P,Q)$ if and only if 
    \begin{equation}
    \langle \pi_+(\omega_P), \pi_+(\omega_Q) \rangle = \langle \pi_-(\omega_P), \pi_-(\omega_Q) \rangle.
    \end{equation}
    We compute that 
    \begin{align}
        \langle \pi_+(\omega_P), \pi_+(\omega_Q) \rangle &= \Bigl\langle \frac{\omega_P + *\omega_P}{2}, \frac{\omega_Q + *\omega_Q}{2} \Bigr\rangle \\
        &= \frac{1}{4} \bigl( \underbrace{\langle \omega_P, \omega_Q \rangle + \langle *\omega_P, *\omega_Q \rangle}_{ = 2 \langle \omega_P, \omega_Q \rangle \text{ since $*$ is orthogonal}} + \underbrace{\langle \omega_P, *\omega_Q \rangle + \langle *\omega_P, \omega_Q \rangle}_{= 2\langle \omega_P, *\omega_Q \rangle\text{ since $*$ is orthogonal}} \bigr) \\
        &= \frac{1}{2} \bigl( \langle \omega_P, \omega_Q \rangle + \langle \omega_P, *\omega_Q \rangle \bigr).
    \end{align}
    A similar computation yields
    \begin{equation}
        \langle \pi_-(\omega_P), \pi_-(\omega_Q) \rangle = \frac{1}{2} \bigl( \langle \omega_P, \omega_Q \rangle - \langle \omega_P, *\omega_Q \rangle \bigr).
    \end{equation}
    Therefore, we have
    \begin{equation}
        \langle \pi_+(\omega_P), \pi_+(\omega_Q) \rangle - \langle \pi_-(\omega_P), \pi_-(\omega_Q) \rangle = \langle \omega_P, *\omega_Q \rangle.
    \end{equation}
    By \thref{nontrivial_intersection}, this equals 0 if and only if $P$ and $Q$ intersect nontrivially.
\end{proof}

\begin{lemma}\thlabel{planes_containing_p}
    Fix a point $p \in S^3$, and let $K \subseteq G_2^+(\R^4)$ be the set of oriented 2-planes containing $p$. Then $\omega(K)$ is the graph of an isometry $S^2_- \to S^2_+$.
\end{lemma}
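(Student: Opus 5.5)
Fix $p \in S^3$. The plan is to use \thref{theta_nonequal}: two planes both containing $p$ intersect nontrivially, so for all $P, Q \in K$ we get $\theta_+(P,Q) = \theta_-(P,Q)$. This equal-angle property is the whole engine of the proof.

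First I show that $\omega(K)$ is the graph of a function $f: S^2_- \to S^2_+$. This means showing that $P \mapsto \pi_-(\omega_P)$ restricted to $K$ is a bijection onto $S^2_-$.

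\emph{Injectivity.} Suppose $P, Q \in K$ have $\pi_-(\omega_P) = \pi_-(\omega_Q)$. Then $\theta_-(P,Q) = 0$, so $\theta_+(P,Q) = 0$, so $\omega_P = \omega_Q$. Since $\omega$ is injective, $P = Q$.

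\emph{Surjectivity.} Parametrize $K$ explicitly. An oriented plane containing $p$ is $\lspan(p, v)$ with $v$ a unit vector in $p^\perp$, so $K$ is homeomorphic to the unit sphere $S^2 \subset p^\perp$. The map $v \mapsto \pi_-(p^* \wedge v^*)$ is then a continuous injective map $S^2 \to S^2_-$. By invariance of domain its image is open; it is also compact, hence closed, so it is all of $S^2_-$. Alternatively, I can avoid invariance of domain by computing directly. Rotate so that $p = e_1$ (an orientation-preserving rotation preserves $*$, hence $E_\pm$ and inner products). Then for $v = \sum_{j\ge 2} v_j e_j$ we have
\begin{equation}
\pi_-(e_1^*\wedge v^*) = \tfrac12\bigl(v_2(e_1^*\wedge e_2^* - e_3^*\wedge e_4^*) + v_3(e_1^*\wedge e_3^* + e_2^*\wedge e_4^*) + v_4(e_1^*\wedge e_4^* - e_2^*\wedge e_3^*)\bigr).
\end{equation}
Using the orthonormal basis of $E_-$ from \thref{eigenspaces}, this map is visibly a linear isometry from $S^2$ onto $S^2_-$, up to the factor $\tfrac{1}{\sqrt2}$. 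The analogous formula holds for $\pi_+$. I expect this concrete computation to be the cleanest route.

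So $\omega(K) = \{(x, f(x))\}$, where in the coordinates above $f$ is the composite of the two linear parametrizations. It is an isometry because, for $x = \pi_-(\omega_P)$ and $y = \pi_-(\omega_Q)$, the angle between $f(x)$ and $f(y)$ is $\theta_+(P,Q) = \theta_-(P,Q)$, which is the angle between $x$ and $y$. Since both spheres have the same radius, preserving angles is the same as preserving distances.

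The main obstacle is surjectivity onto $S^2_-$, that is, showing the graph is defined on all of $S^2_-$. The explicit coordinate computation after rotating $p$ to $e_1$ handles it. One point needs checking there: the rotation moving $p$ to $e_1$ must be chosen in $SO(4)$, so that it commutes with $*$. Such a rotation always exists.
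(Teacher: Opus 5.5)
Your proof is correct and follows the paper's argument essentially step for step. The steps match: equal angles from \thref{theta_nonequal}; injectivity of $\pi_-$ on $\omega(K)$; surjectivity because $K \cong S^2$ and a continuous injection $S^2 \to S^2_-$ is onto (your invariance-of-domain version tacitly uses that $S^2_-$ is connected); and the isometry property from $\theta_+ = \theta_-$. Your alternative coordinate computation with $p = e_1$ is also valid and is essentially the content of the paper's later \thref{S2_isometry}; it has the small bonus of exhibiting $f$ directly as the restriction of a linear isometry, with no topology needed.
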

\begin{proof}
    By \thref{theta_nonequal}, $\theta_+(P,Q) = \theta_-(P,Q)$ for all $P,Q \in K$. In particular, the projection map $\pi_-: \omega(K) \to S^2_-$ is injective because
    \begin{align}
        \pi_-(\omega_P) = \pi_-(\omega_Q) &\implies \theta_-(P,Q) = 0 \\
        &\implies \theta_+(P,Q) = \theta_-(P,Q) = 0 \\
        &\implies P = Q.
    \end{align}
    Since choosing an oriented 2-plane containing $p$ is the same as choosing a unit vector orthogonal to $p$, we have that $K \cong S^2$. Since any continuous injection $S^2 \to S^2$ is a bijection, we conclude that $\pi_-$ is a bijection. Therefore, $\omega(K)$ is the graph of the map $\pi_+ \circ \pi_-^{-1}: S^2_- \to S^2_+$, and the fact that $\theta_+(P,Q) = \theta_-(P,Q)$ tells us this map is an isometry.
\end{proof}

\begin{lemma}\thlabel{fibrations_are_spheres}
    Let $F$ be a fibration of $S^3$ by oriented great circles. Then $M_F \cong S^2$.
\end{lemma}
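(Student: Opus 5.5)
The plan is to show that $\omega(M_F)$ is the graph of a continuous map between $S^2_-$ and $S^2_+$ (in one direction or the other); a graph of a continuous map on $S^2$ is homeomorphic to $S^2$. The key input is \thref{theta_nonequal}: distinct fibers of $F$ are disjoint great circles, so their planes intersect trivially. Hence $\theta_+(P,Q) \neq \theta_-(P,Q)$ for all distinct $P,Q \in M_F$.

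First I will establish basic topology of $M_F$. Define $\phi: S^3 \to M_F$ sending $p$ to the oriented plane of the fiber through $p$, i.e.\ the plane with positive basis $(p,p')$. Since $F$ is a fibration, $p \mapsto p'$ is continuous, so $\phi$ is continuous and surjective. Thus $M_F$ is compact and connected, and it is closed in $G_2^+(\R^4)$. Moreover $\phi$ identifies $M_F$ with the quotient $S^3/F$, which is a surface, but I would rather avoid that and work directly with the projections.

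Next, the map $h(P,Q) = \theta_+(P,Q) - \theta_-(P,Q)$ is continuous and nonzero off the diagonal of $M_F \times M_F$. The off-diagonal set $\{(P,Q) : P \neq Q\}$ is connected. To see this, lift to $X = \{(p,q) : C_p \neq C_q\}$, which is path-connected as noted in \thref{sign}. So $h$ has constant sign there, say $\theta_- > \theta_+$ (the other case is symmetric). Then $\pi_-|_{\omega(M_F)}$ is injective: $\pi_-(\omega_P) = \pi_-(\omega_Q)$ gives $\theta_-(P,Q) = 0 \le \theta_+(P,Q)$, which forces $P = Q$. By the same argument as in \thref{planes_containing_p}, $\pi_+ \circ \pi_-^{-1}$ is then distance-decreasing on its domain.

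The remaining step, and the main obstacle, is surjectivity of $\pi_-: \omega(M_F) \to S^2_-$. Given that, $\pi_-$ is a continuous bijection from a compact space to a Hausdorff space, hence a homeomorphism, and $M_F \cong S^2$. For surjectivity I would use a degree argument. Fix a point $p$ and let $K$ be the set of planes containing $p$, as in \thref{planes_containing_p}. Each fiber plane $P$ and each $Q \in K$ satisfy $\theta_+(P,Q) = \theta_-(P,Q)$ if and only if $P$ passes through $p$. I would try to show instead that $\pi_-\circ\omega\circ\phi : S^3 \to S^2_-$ factors through $S^3/F$. Since $\omega(M_F)$ injects into $S^2_-$, its image is a compact subset; if it were proper, then $\omega(M_F)$ would be homeomorphic to a compact subset of the plane. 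Yet $M_F \cong S^3/F$ is a closed surface (the fibration is locally trivial with circle fibers), and a closed surface cannot embed in $\R^2$ by invariance of domain. Hence the image is all of $S^2_-$. Should the quotient-manifold claim prove delicate, my fallback is invariance of domain applied locally via a local section of $F$, which gives $M_F$ the structure of a 2-manifold near each point.
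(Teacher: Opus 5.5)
Your argument is correct, at the same level of rigor as the paper, which also takes for granted that $M_F$ is a closed 2-manifold. It follows a genuinely different route.

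\textbf{The paper's route.} The paper argues intrinsically. $S^3\to M_F$ is a circle fibration over a closed surface. A loop in $M_F$ lifts to a path in $S^3$, which can be closed up inside a fiber, and $S^3$ is simply connected, so $\pi_1(M_F)=0$. The classification of surfaces then gives $M_F\cong S^2$.

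\textbf{Your route.} You use the great-circle geometry instead. \thref{theta_nonequal} makes $\theta_+-\theta_-$ nonvanishing off the diagonal of $M_F\times M_F$. You get connectedness of that off-diagonal set by pushing forward the pair space from \thref{sign}, rather than deducing it from $M_F\cong S^2$. This gives a continuous injection $\pi_-\colon\omega(M_F)\to S^2_-$ (or the analogous one with $\pi_+$). Invariance of domain then makes its image open and compact, hence all of $S^2_-$.

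\textbf{Comparison.} The paper's argument is purely topological and would work for any circle fibration of $S^3$ whose quotient is a surface, but it calls on the classification of surfaces. Yours replaces that with invariance of domain and produces an explicit homeomorphism, namely projection to one factor. In effect you also prove the forward direction of \thref{distance_decreasing}, so that theorem would no longer need this lemma.

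\textbf{Cleanups.}
\begin{enumerate}
\item The surjectivity step is more direct if you skip the planar-embedding detour: a continuous injection of a closed 2-manifold into $S^2_-$ is open by invariance of domain, and its image is compact.
\item Delete the abandoned aside claiming that, for $Q\in K$, $\theta_+(P,Q)=\theta_-(P,Q)$ if and only if $P$ passes through $p$. That is false. By \thref{theta_nonequal}, equality only says $P\cap Q\neq 0$, and this can happen along a line other than $\R p$.
\item The remark that $\pi_-\circ\omega\circ\phi$ factors through $S^3/F$ is automatic and does no work, so it can go too.
\item For the 2-manifold structure of $M_F$, your fallback can be made concrete. A small disk around $p_0$ in the great 2-sphere orthogonal to $p_0'$ meets each nearby fiber exactly once, and this gives a chart near the fiber through $p_0$.
\end{enumerate}
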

\begin{proof}
    We view $F$ as a quotient map $S^3 \to M_F$ with great circle fibers. $M_F$ is a closed 2-manifold, and since every loop in $M_F$ has a lift over $F$ to a loop in $S^3$, we conclude that $M_F$ has trivial fundamental group. By the classification of surfaces, the only closed 2-manifold with trivial fundamental group is $S^2$.
\end{proof}

\begin{theorem}\thlabel{distance_decreasing}
    A subset $N \subseteq G_2^+(\R^4)$ corresponds to a fibration of $S^3$ by great circles (that is, it is in the image of the map $M$) if and only if $\omega(N)$ is the graph of a distance-decreasing map $S_-^2 \to S_+^2$ or $S_+^2 \to S_-^2$.
\end{theorem}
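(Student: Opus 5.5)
The plan is to translate the fibration condition into two properties of $N$, namely that distinct planes of $N$ intersect trivially and that every point of $S^3$ lies in some plane of $N$, and to read both off the $\theta_\pm$ angles using \thref{theta_nonequal} and \thref{planes_containing_p}. Throughout, ``distance-decreasing'' means strictly: $d(f(x),f(y))<d(x,y)$ for $x\neq y$.

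\emph{Forward direction.} Let $N=M_F$. By \thref{fibrations_are_spheres}, $M_F\cong S^2$. Distinct fibers are disjoint great circles, so distinct planes $P\neq Q$ in $M_F$ intersect trivially. By \thref{theta_nonequal}, the continuous function $\theta_+(P,Q)-\theta_-(P,Q)$ is therefore nonzero on $M_F\times M_F$ minus the diagonal. That space is $S^2\times S^2$ minus the diagonal, which is path-connected, so the function has constant sign.

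Say $\theta_+<\theta_-$ off the diagonal. Then $\pi_-$ is injective on $\omega(M_F)$: if $\pi_-(\omega_P)=\pi_-(\omega_Q)$ with $P\neq Q$, we would get $\theta_-(P,Q)=0$, hence $\theta_+(P,Q)<0$, which is impossible. A continuous injection $S^2\to S^2$ is a bijection, by invariance of domain as in \thref{planes_containing_p}. Hence $\omega(M_F)$ is the graph of $f=\pi_+\circ\pi_-^{-1}:S^2_-\to S^2_+$. Since the distances on $S^2_\pm$ are proportional to the angles $\theta_\pm$, the inequality $\theta_+<\theta_-$ says exactly that $f$ is distance-decreasing. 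If instead $\theta_->\theta_+$, the symmetric argument gives a distance-decreasing map $S^2_+\to S^2_-$. (The sign of $F$ decides which case occurs, but we do not need that here.)

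\emph{Backward direction.} Suppose $\omega(N)$ is the graph of a distance-decreasing $f:S^2_-\to S^2_+$; the other case is symmetric.

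First, the fibers are pairwise disjoint. For distinct planes $P\neq Q$ in $N$, the points $\pi_-(\omega_P)$ and $\pi_-(\omega_Q)$ are distinct, so $\theta_+(P,Q)<\theta_-(P,Q)$. By \thref{theta_nonequal}, $P$ and $Q$ intersect trivially, so the corresponding oriented great circles are disjoint.

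Second, the fibers cover $S^3$. Fix $p\in S^3$. By \thref{planes_containing_p}, the planes containing $p$ form the graph of an isometry $g:S^2_-\to S^2_+$. A plane of $N$ contains $p$ exactly when $f(x)=g(x)$ for some $x$, i.e.\ when $h=g^{-1}\circ f:S^2_-\to S^2_-$ has a fixed point. This fixed-point statement is the main obstacle, and I would attack it by a degree argument.

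The map $h$ is strictly distance-decreasing, so for each $x$ the points $h(x)$ and $h(-x)$ are at distance $<\pi$, hence not antipodal. Therefore $h$ and $h\circ a$, where $a$ is the antipodal map, can be joined by the straight-line homotopy normalized to the sphere. It follows that $\deg h=\deg(h\circ a)=-\deg h$, so $\deg h=0$. On the other hand, a fixed-point-free self-map of $S^2$ is homotopic to $a$, via normalizing $(1-t)h(x)-tx$, and so has degree $-1$. Hence $h$ has a fixed point.

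Finally, $N$ is compact, being homeomorphic to the graph of $f$, and its circles partition $S^3$. The map $S^3\to N$ sending a point to its fiber is then continuous, by a closed-graph and compactness argument, and locally trivial. So $N=M_F$ for a great circle fibration $F$, with the fiber orientations induced from the oriented planes.
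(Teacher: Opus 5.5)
Your proof is correct and follows the paper's proof closely. The forward direction is the same: constant sign of $\theta_+-\theta_-$ on the connected space of distinct pairs, injectivity of $\pi_-$ upgraded to bijectivity, and the graph conclusion. So is the set-up of the backward direction: disjointness from the strict inequality, and covering via the isometry $g$ whose graph is $\omega$ of the planes through $p$.

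The one genuine difference is the fixed-point step. The paper settles it in one line with the contraction mapping theorem for compact spaces. The continuous function $x\mapsto d(x,h(x))$ attains a minimum at some $x_0$ on the compact sphere. If $h(x_0)\neq x_0$, then $d(h(x_0),h(h(x_0)))<d(x_0,h(x_0))$ contradicts minimality. This is elementary and even gives uniqueness of the fixed point, which is redundant here since disjointness is already known.

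Your degree argument is also valid: $h\simeq h\circ a$ forces $\deg h=0$, while a fixed-point-free self-map of $S^2$ is homotopic to $a$ and so has degree $-1$. It needs less than strict contraction, only that $h$ never sends an antipodal pair to an antipodal pair. The price is homotopy invariance and multiplicativity of degree, so this step is not really ``the main obstacle''.

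For the bundle structure, the paper simply cites Section 6 of Gluck and Warner. Your closed-graph argument gives continuity of the quotient map. Local triviality is asserted rather than shown; it follows, for instance, from local sections obtained by orthogonally projecting nearby planes of $N$ onto a fixed one. That leaves you at the same level of completeness as the paper.

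One slip: the second case of your forward direction should be $\theta_+>\theta_-$. As written, ``$\theta_->\theta_+$'' just repeats the first case.
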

\begin{proof}
    Forward direction: Let $F$ be a fibration of $S^3$ by oriented great circles. We aim to show that $\omega(M_F) \subseteq S^2_- \times S^2_+$ is the graph of a distance-decreasing map from one $S^2$ to the other. Let $X$ denote the space $\{ (P,Q) \in M_F \times M_F\mid P \neq Q \}$. Since the fibers of $F$ are disjoint, \thref{theta_nonequal} tells us that for all $(P,Q) \in X$ we have $\theta_+(P,Q) - \theta_-(P,Q) \neq 0$. Since $X$ is path-connected, the intermediate value theorem tells us that one of the following hold:
    \begin{enumerate}
        \item $\theta_+(P,Q) - \theta_-(P,Q) < 0$ for all $(P,Q) \in X$.
        \item $\theta_+(P,Q) - \theta_-(P,Q) > 0$ for all $(P,Q) \in X$.
    \end{enumerate}
    Let's assume we're in situation (1), meaning $0 \leq \theta_+(P,Q) < \theta_-(P,Q)$ for all $(P,Q) \in X$. In particular, this means $\theta_-(P,Q) > 0$. By \thref{theta}, this is equivalent to saying $\pi_-(\omega_P) \neq \pi_-(\omega_Q)$ for all $P \neq Q$ in $M_F$, and so the projection map $\pi_-: \omega(M_F) \to S^2_-$ is injective. By \thref{fibrations_are_spheres} we have $\omega(M_F) \cong M_F \cong S^2$, and since any continuous injection $S^2 \to S^2$ is a bijection, we conclude that $\pi_-: \omega(M_F) \to S^2_-$ is a bijection. Therefore, $\omega(M_F)$ is the graph of the map $\pi_+ \circ \pi_-^{-1}: S^2_- \to S^2_+$, and the fact that $\theta_+(P,Q) < \theta_-(P,Q)$ tells us the map is distance-decreasing.

    Backward direction: Let $f: S^2_- \to S_2^+$ be a distance-decreasing map, and suppose $N \subseteq G_2^+(\R^4)$ such that $\omega(N)$ is the graph of $f$. By \thref{theta_nonequal}, $N$ corresponds to a collection of disjoint oriented great circles in $S^3$. To see that these circles cover $S^3$, fix a point $p \in S^3$ and let $K \subseteq G_2^+(\R^4)$ be the set of oriented 2-planes containing $p$. By \thref{planes_containing_p}, $\omega(K)$ is the graph of an isometry $g: S^2_- \to S^2_+$. By the contraction mapping theorem for compact spaces, $g^{-1} \circ f: S^2_- \to S^2_-$ has a fixed point $a \in S^2_-$. So then $a + f(a) = a + g(a) \in \omega(K) \cap \omega(N)$. Therefore, $\omega^{-1}(a + f(a))$ is an oriented great circle in $N$ containing $p$. Since $p$ was arbitrary, we find that the oriented great circles in $N$ cover $S^3$. For details on why these great circles have the local product structure necessary to be a fiber bundle, see \cite[Section 6]{gluck_warner}.
\end{proof}
And so we have proven \thref{gluck_warner}. We now introduce one more theorem from \cite{gluck_warner} that will serve as the starting point for the main idea of this paper. 
\begin{theorem}\thlabel{hopfs_are_slices}
    \textnormal{\cite[Proposition 5.10]{gluck_warner}}
    Let $H$ be a Hopf fibration. 
    \begin{itemize}
        \item If the sign of $H$ is positive, then $\omega(M_H) = S_-^2 \times \{ q \}$ for some $q \in S_+^2$. 
        \item If the sign of $H$ is negative, then $\omega(M_H) = \{p\} \times S_+^2$ for some $p \in S_-^2$.
    \end{itemize}
\end{theorem}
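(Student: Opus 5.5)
Write $H = T(H_0)$, where $H_0$ is the standard Hopf fibration on $S^3$ and $T \in O(4)$. The plan is to first compute $\omega(M_{H_0})$ explicitly, and then see how $T$ moves this set.

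\textbf{The standard fibration.} The fiber of $H_0$ through a unit vector $u$ is the oriented plane $P_u$ with positive orthonormal basis $(u, Ju)$. Here $J$ is multiplication by $i$, so in coordinates $J e_1 = e_2$ and $J e_3 = e_4$. Take the oriented orthonormal basis $(u, Ju, w, Jw)$ of $\R^4$, which is possible since $J$ is orthogonal and complex-linear. Then $P_u^\perp$ is spanned by $(w, Jw)$, and it is positively oriented because complex bases have positive real orientation. Hence
\[
*\omega_{P_u} = \omega_{P_u^\perp} = w^*\wedge (Jw)^*.
\]
Next consider $\kappa(a,b) = \langle Ja, b\rangle$. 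In the basis $(u,Ju,w,Jw)$ it equals
\[
\kappa = u^*\wedge(Ju)^* + w^*\wedge(Jw)^* = \omega_{P_u} + *\omega_{P_u}.
\]
The same formula in the standard basis gives $\kappa = e_1^*\wedge e_2^* + e_3^*\wedge e_4^*$. Therefore
\[
\pi_+(\omega_{P_u}) = \tfrac12\kappa = \tfrac12(e_1^*\wedge e_2^* + e_3^*\wedge e_4^*)
\]
for every $u$, which is a fixed point $q_0 \in S^2_+$. So $\omega(M_{H_0}) \subseteq S^2_- \times \{q_0\}$. For equality, note that $M_{H_0} \cong S^2$ by \thref{fibrations_are_spheres}, and $\pi_-$ is injective on $\omega(M_{H_0})$. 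Injectivity holds because two distinct planes with the same $\pi_+$ have $\theta_+ = 0$; if they also had equal $\pi_-$ they would coincide, since $\omega$ is injective. Then the fact that a continuous injection $S^2 \to S^2$ is surjective gives equality. This settles the standard case.

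\textbf{Transport by $T$.} Let $T$ act on $\Lambda^2\R^4$ by $T\cdot\alpha = \alpha(T^{-1}\cdot, T^{-1}\cdot)$. Then $\omega_{T(P)} = T\cdot \omega_P$, because $(Tu)^* = u^*\circ T^{-1}$. This action preserves the inner product of \thref{inner_product}, since projection determinants are invariant under an isometry applied to both planes. The key check is that it commutes with $*$ when $\det T > 0$ and anticommutes with $*$ when $\det T < 0$. The reason is that $T$ sends $P^\perp$ to $T(P)^\perp$, and the orientation convention of \thref{hodge_star} is either preserved or flipped according to the sign of $\det T$.

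In the first case $T$ preserves $E_\pm$, so $\omega(M_H) = T\cdot\omega(M_{H_0}) = S^2_- \times \{T\cdot q_0\}$. Here $T$ is an isometry of $E_-$, so it maps $S^2_-$ onto itself. In the second case $T$ swaps $E_+$ and $E_-$, and the image is $\{T\cdot q_0\}\times S^2_+$ with $T\cdot q_0 \in S^2_-$.

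\textbf{Expected obstacle.} I expect the main difficulty to be the bookkeeping of orientations. Two points need care. First, I must confirm that $(u,Ju,w,Jw)$ is positively oriented in $\R^4$, so that $*\omega_{P_u}$ equals $+w^*\wedge(Jw)^*$ and not its negative. Second, I must confirm the sign behavior of $T$ against $*$. Both reduce to the connectedness of $GL_2(\C)$ and to the definition of $P^\perp$'s orientation, applied after checking one basis explicitly.
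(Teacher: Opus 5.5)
Your proof is correct, and it differs from the paper's in a useful way. The paper fixes an orthonormal basis $f_i = Te_i$ adapted to $H$ and writes every fiber plane as $\lspan(af_1+bf_2+cf_3+df_4,\,-bf_1+af_2-df_3+cf_4)$. It then expands $\omega_P$ and $*\omega_P$ coefficient by coefficient and finds $\pi_+(\omega_P)=\tfrac12(a^2+b^2+c^2+d^2)(f_1^*\wedge f_2^*+f_3^*\wedge f_4^*)$. The negative case is handled by noting that $*$ flips sign when $(f_i)$ is negatively oriented.

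You replace that expansion with the identity $\omega_{P_u}+*\omega_{P_u}=\kappa$, where $\kappa(a,b)=\langle Ja,b\rangle$. This explains \emph{why} $\pi_+$ is constant: $q$ is half the $2$-form of the complex structure $I_H$, which ties the result to Section~\ref{hopf_linear}. You also replace the orientation remark with the equivariance $T\cdot(*\alpha)=\sgn(\det T)\,*(T\cdot\alpha)$, which shows directly why the sign of $H$ decides which factor collapses.

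In addition, you prove the reverse inclusion $S^2_-\times\{q\}\subseteq\omega(M_H)$ via injectivity of $\pi_-$ and invariance of domain. The paper only establishes $\pi_+\circ\omega(M_H)=\{q\}$ and then asserts equality. That step could be read off its formulas, since $\pi_-(\omega_P)$ is, up to scale, the Hopf map of $(a,b,c,d)$, but the paper does not say so.

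The paper's route is a self-contained hands-on computation; yours is shorter and coordinate-free. You could even drop the transport step. Run your $\kappa$ argument directly with $J=I_H$: the basis $(u,Ju,w,Jw)$ then has the orientation of $H$. So $\omega_P\pm *\omega_P=\kappa_{I_H}$ for every fiber plane $P$, with the sign matching that of $H$.
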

\begin{proof}
    By \thref{transformation_sign}, we may choose an orthonormal basis $\{ f_1, f_2, f_3, f_4\}$ under which $H$ looks like the standard Hopf fibration (meaning the planes $\lspan(f_1, f_2)$ and $\lspan(f_3, f_4)$ are in $M_H$), with the orientation of this basis depending on the sign of $H$.

    \thref{standard_hopf} tells us that if we identify the basis $\{f_1, f_2, f_3, f_4\} \subseteq \R^4$ with 
    \begin{equation}
        \{ (1,0), (i,0), (0,1), (0,i) \} \subseteq \C^2,
    \end{equation}
    then all planes in $M_H$ are of the form $\lspan_\R(x, ix)$ for $x \in \C^2$. Writing this in terms of the $f_i$, we find that all planes in $M_H$ are of the form
    \begin{equation}
        P = \lspan(af_1 + bf_2 + cf_3 + df_4, \; -bf_1 + af_2 - df_3 + cf_4).
    \end{equation}
    Assuming $a^2 + b^2 + c^2 + d^2 = 1$, we then compute that 
    \begin{align}
        \omega_P = (a^2 + b^2) f_1^* \wedge f_2^* + (-ad+bc)f_1^* \wedge f_3^* + (ac + bd) f_1^* &\wedge f_4^* \\
        + (-bd - ac) f_2^* \wedge f_3^* + (bc-ad) f_2^* &\wedge f_4^* \\
        + (c^2 + d^2) f_3^* &\wedge f_4^*.
    \end{align}
    If the sign of $H$ is positive, then $\{f_1, f_2, f_3, f_4\}$ is a positively oriented basis of $\R^4$, and so
    \begin{align}
        *\omega_P = (a^2 + b^2) f_3^* \wedge f_4 ^* + (ad-bc)f_2^* \wedge f_4^* \phantom{-} + (ac + bd) f_2^* &\wedge f_3^* \\
        + (-bd - ac) f_1^* \wedge f_4^* + (-bc+ad) f_1^* &\wedge f_3^* \\
        + (c^2 + d^2) f_1^* &\wedge f_2^*.
    \end{align}
    Therefore, 
    \begin{align}
        \pi_+(\omega_P) &= \frac{\omega_P + *\omega_P}{2}\\
        &= \frac{(a^2 + b^2 + c^2 + d^2) f_1^* \wedge f_2^* + (a^2 + b^2 + c^2 + d^2) f_3^* \wedge f_4^*}{2} \\
        &= \frac{f_1^* \wedge f_2^* + f_3^* \wedge f_4^*}{2}.
    \end{align}
    In particular, this does not depend on the values of $a,b,c,d$. Letting $q = \frac{1}{2}(f_1^* \wedge f_2^* + f_3^* \wedge f_4^*)$, we conclude that $\pi_+ \circ \omega(M_H) = \{q\}$, meaning $\omega(M_H) = S^2_- \times \{q\}$, as desired.

    If the sign of $H$ is negative, then $*\omega_P$ is negative what is written above. A similar argument reveals that $\pi_-(\omega_P) = q$ and $\omega(M_H) = \{q\} \times S^2_+$.
\end{proof}

\begin{corollary}\thlabel{neg_to_pos}
    Distance-decreasing maps $S^2_- \to S^2_+$ correspond to positive fibrations, while distance-decreasing maps $S^2_+ \to S^2_-$ correspond to negative fibrations.
\end{corollary}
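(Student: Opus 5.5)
The plan is to combine \thref{distance_decreasing} with \thref{hopfs_are_slices}, reading off the sign of a fibration from the direction of the distance-decreasing graph. There are two directions to check.

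For the first direction, let $f: S^2_- \to S^2_+$ be distance-decreasing and let $F$ be the fibration with $\omega(M_F)$ equal to the graph of $f$; it exists by \thref{distance_decreasing}. We must show $F$ is positive. The key observation is that the space of distance-decreasing maps $S^2_- \to S^2_+$ is path-connected. Indeed, $tf + (1-t)q$ cannot be used directly, since it leaves the sphere. Instead I would use a homotopy through contractions within the sphere. The simplest device is to precompose or postcompose with a radial shrinking toward a point, $f_t = \rho_t \circ f$. Here $\rho_t: S^2_+ \to S^2_+$ is the map that, along geodesics from a fixed point $q$, scales the distance from $q$ by a factor $t\in[0,1]$.

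One must check that $\rho_t$ is $1$-Lipschitz on the sphere. This holds on a hemisphere around $q$ but fails near the antipode. So I would first arrange $\operatorname{im} f$ to lie in a hemisphere, or else use a different contraction. A cleaner alternative is the following. Positivity of the sign is an open and closed condition, because the sign is locally constant on $\mathcal{F}$ by the intermediate value argument of \thref{sign}. So it suffices to connect $f$ to a constant map by any path of distance-decreasing maps. This is where I expect the main obstacle to lie: producing an explicit path of distance-decreasing maps to a constant (the retraction of Gluck and Warner), and verifying that the sign of the resulting fibrations varies continuously.

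At $t=0$ the map is constant, with graph $S^2_- \times \{q\}$. That graph is $\omega(M_H)$ for a Hopf fibration $H$ with $\pi_+$ constant. By \thref{hopfs_are_slices}, together with the transitivity of $SO(4)$ acting on $S^2_+$, this $H$ is positive. Continuity of the sign along the path then gives positivity of $F$.

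An alternative route avoids homotopies altogether by computing the sign directly. Pick $P\neq Q$ in $M_F$ and compute $\det[p\ p'\ q\ q']$. Up to positive factors, this determinant equals $\langle \omega_P, *\omega_Q\rangle$, since it is the determinant of the wedge of $\omega_P$ and $\omega_Q$ against the volume form. By the computation in \thref{theta_nonequal}, this quantity is proportional to $\cos\theta_+ - \cos\theta_-$. For a graph over $S^2_-$ we have $\theta_+<\theta_-$, so $\cos\theta_+-\cos\theta_->0$, which gives positive sign.

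I would actually present this direct route, since it is cleaner. The steps are as follows.
\begin{enumerate}
\item Verify the identity $\det[p\ p'\ q\ q'] = \langle \omega_P, *\omega_Q\rangle$. This follows from $(p^*\wedge p'^*\wedge q^*\wedge q'^*)(e_1,\dots,e_4)$ together with the definition of $*$, and is checked on a basis.
\item Apply the formula $\langle\pi_+\omega_P,\pi_+\omega_Q\rangle-\langle\pi_-\omega_P,\pi_-\omega_Q\rangle=\langle\omega_P,*\omega_Q\rangle$ from the proof of \thref{theta_nonequal}.
\item Use the equal norms $\frac{1}{\sqrt2}$ to conclude.
\end{enumerate}
The symmetric argument handles maps $S^2_+\to S^2_-$, where $\theta_->\theta_+$ fails and the sign reverses, giving negative fibrations. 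As a consistency check, the Hopf case of \thref{hopfs_are_slices} has $\theta_+=0<\theta_-$, matching positive sign.
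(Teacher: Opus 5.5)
Your proposal is correct, and the route you say you would actually present is genuinely different from the paper's. The paper argues by deformation. It asserts that every distance-decreasing map $S^2_-\to S^2_+$ can be deformed through distance-decreasing maps to a constant map. It then observes via \thref{hopfs_are_slices} that the graph of a constant map is a positive Hopf fibration, and implicitly uses that the sign cannot jump along the deformation.

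You instead compute the sign pointwise. \thref{volume_form} gives $\omega_P\wedge\omega_Q=\det[p\ p'\ q\ q']\operatorname{Vol}$. The appendix identity $\alpha\wedge *\beta=\langle\alpha,\beta\rangle\operatorname{Vol}$, together with $**=\id$ (immediate because $(p_3,p_4,p_1,p_2)$ is an even reordering of $(p_1,p_2,p_3,p_4)$), then yields $\det[p\ p'\ q\ q']=\langle\omega_P,*\omega_Q\rangle$. This is an exact equality; no positive factor is needed, since $(p,p')$ and $(q,q')$ are orthonormal. Combining it with the identity from the proof of \thref{theta_nonequal} gives $\det[p\ p'\ q\ q']=\frac12(\cos\theta_+-\cos\theta_-)$.

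Your route needs neither of the two ingredients the paper leaves unproved. The first is a path of distance-decreasing maps to a constant, which, as you correctly note, is not just a naive radial shrinking. The second is continuity of the sign along the resulting path of fibrations. Your route also avoids \thref{hopfs_are_slices} entirely. It shows that the sign of $F$ and the dichotomy $\theta_+<\theta_-$ versus $\theta_+>\theta_-$ from the proof of \thref{distance_decreasing} are one and the same invariant.

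The paper's route has the merit of tying the statement to the Hopf fibrations, and it echoes Gluck and Warner's stronger deformation-retraction picture. As written, however, it is only a sketch. In a write-up, drop the homotopy discussion and the phrase ``up to positive factors.'' State the second case plainly: for a graph of a distance-decreasing map $S^2_+\to S^2_-$ one has $\theta_-<\theta_+$, hence a negative determinant.
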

\begin{proof}
    Any distance-decreasing map $S^2_- \to S^2_+$ may be continuously deformed through the space of distance-decreasing maps into a constant map, and the graph of a constant map $S^2_- \to S^2_+$ corresponds to a positive Hopf fibration by \thref{hopfs_are_slices}. The negative case follows by the same argument.
\end{proof}

\section{Idea for generalizing}\label{motivation}

\thref{hopfs_are_slices} has an interesting consequence: If $H^+$ is a positive Hopf fibration and $H^-$ is a negative Hopf fibration, then the theorem tells us that $M_{H^+} = S^2_- \times \{q\}$ for some $q \in S^2_+$, and $M_{H^-} = \{p\} \times S^2_+$ for some $p \in S^2_-$. Therefore, $M_{H^+}\cap M_{H^-} = \{(p,q)\}$. In other words, 
\begin{fact}\thlabel{opp_sign_share}
    Any two Hopf fibrations of opposite sign are guaranteed to share a unique oriented great circle.
\end{fact}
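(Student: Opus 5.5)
The statement to prove is \thref{opp_sign_share}: a positive Hopf fibration $H^+$ and a negative Hopf fibration $H^-$ of $S^3$ share exactly one oriented great circle. The plan is to read this off directly from \thref{hopfs_are_slices}, using the fact that $\omega$ is injective (\thref{omega}) with image $S^2_-\times S^2_+$ (\thref{image_of_omega}). Throughout, I identify a plane $P$ with the pair $\bigl(\pi_-(\omega_P),\pi_+(\omega_P)\bigr)$, which is legitimate by the corollary following \thref{image_of_omega}.

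First, the oriented fibers of a fibration $F$ correspond bijectively to the planes in $M_F$. Each oriented great circle determines a unique oriented 2-plane positively containing it, and conversely each oriented plane meets $S^3$ in a unique oriented great circle. So $H^+$ and $H^-$ share an oriented fiber exactly when $M_{H^+}\cap M_{H^-}\neq\emptyset$, and the shared fibers correspond bijectively to the planes in this intersection.

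Second, since $\omega$ is injective, $\omega(M_{H^+}\cap M_{H^-})=\omega(M_{H^+})\cap\omega(M_{H^-})$. By \thref{hopfs_are_slices}, $\omega(M_{H^+})=S^2_-\times\{q\}$ for some $q\in S^2_+$, and $\omega(M_{H^-})=\{p\}\times S^2_+$ for some $p\in S^2_-$. These two slices intersect in exactly the single point $(p,q)$, i.e. the vector $p+q\in\Lambda^2\R^4$. This point lies in $S^2_-\times S^2_+=\im(\omega)$ by \thref{image_of_omega}, so $\omega^{-1}(p+q)$ is a genuine oriented plane. Hence the intersection $M_{H^+}\cap M_{H^-}$ consists of exactly one plane, and by the first step the two fibrations share exactly one oriented fiber.

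There is no real obstacle here, since the key input \thref{hopfs_are_slices} is already established. The only point needing care is the bookkeeping of orientations. We must use oriented planes and the orientation convention of \thref{standard_hopf}, so that "shared fiber" means shared oriented fiber. If we forgot orientation, the plane $-\omega^{-1}(p+q)$, whose image under $\omega$ is $(-p,-q)$, would also carry the same unoriented circle, but it lies in neither slice.
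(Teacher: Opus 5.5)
Your argument is correct and is essentially the paper's own: the paper likewise reads the fact off \thref{hopfs_are_slices} by intersecting the slices $S^2_-\times\{q\}$ and $\{p\}\times S^2_+$ in the single point $(p,q)$. You simply spell out the bookkeeping the paper leaves implicit, namely the injectivity of $\omega$ and the bijection between oriented fibers of $F$ and planes in $M_F$; the appeal to \thref{image_of_omega} is redundant, since $p+q$ already lies in both $\omega(M_{H^+})$ and $\omega(M_{H^-})$.
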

\noindent And similarly,
\begin{fact}\thlabel{same_sign_avoid}
    Any two Hopf fibrations of the same sign are either equal, or share no oriented great circles.
\end{fact}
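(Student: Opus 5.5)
The plan is to read this off from \thref{hopfs_are_slices}, in the same way \thref{opp_sign_share} was obtained. Let $H_1$ and $H_2$ be Hopf fibrations of $S^3$ of the same sign, and suppose first that both are positive. By \thref{hopfs_are_slices} there are points $q_1, q_2 \in S^2_+$ with
\begin{equation}
    \omega(M_{H_1}) = S^2_- \times \{q_1\}, \qquad \omega(M_{H_2}) = S^2_- \times \{q_2\}.
\end{equation}
The argument then splits into two cases according to whether $q_1 = q_2$.

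If $q_1 \neq q_2$, the two slices $S^2_- \times \{q_1\}$ and $S^2_- \times \{q_2\}$ are disjoint subsets of $S^2_- \times S^2_+$. Suppose $H_1$ and $H_2$ shared an oriented great circle $C$, and let $P$ be the oriented plane positively containing $C$. Then $P$ would lie in $M_{H_1} \cap M_{H_2}$, and so $\omega_P$ would lie in both slices, which is impossible. Hence the two fibrations share no oriented great circle.

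If $q_1 = q_2$, then $\omega(M_{H_1}) = \omega(M_{H_2})$. Since $\omega$ is injective (\thref{omega}), this gives $M_{H_1} = M_{H_2}$. An oriented great circle corresponds bijectively to the oriented $2$-plane positively containing it, so the two fibrations have exactly the same set of oriented fibers. A fibration by great circles is determined by its set of fibers, so $H_1 = H_2$.

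The negative case is identical, using slices of the form $\{p_i\} \times S^2_+$ with $p_1, p_2 \in S^2_-$. I do not expect a real obstacle here. The only point needing care is the last step of the equal case: passing from $\omega(M_{H_1}) = \omega(M_{H_2})$ back to equality of the fibrations, which rests on the injectivity of $\omega$ and of the correspondence between oriented great circles and oriented $2$-planes.
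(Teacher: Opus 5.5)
Your proposal is correct and follows essentially the same route as the paper, which also reads this fact directly off \thref{hopfs_are_slices}: two same-sign Hopf fibrations correspond to slices $S^2_- \times \{q_i\}$ (or $\{p_i\} \times S^2_+$), and such slices are either identical or disjoint. Your explicit use of the injectivity of $\omega$ to pass from equal slices back to equal fibrations spells out a step the paper leaves implicit.
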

These observations allow us to give an alternate characterization of the homeomorphism $S^2 \times S^2 \to G_2^+(\R^4)$:

\begin{definition}[The map $\varphi_p: S^2 \times S^2 \to G_2^+(\R^4)$]\thlabel{hopf_homeo}
    Fix a point $p \in S^3$, and identify $S^2$ with the set $S^3 \cap \lspan(p)^\perp$ of unit vectors orthogonal to $p$. Given vectors $u, v \in S^2$, let $H^-$ be the negative Hopf fibration containing $\lspan(p,u)$ and $H^+$ be the positive Hopf fibration containing $\lspan(p,v)$. ($H^-$ and $H^+$ are unique by \thref{same_sign_avoid}).
    \begin{center}
        \includegraphics[width = 7.4 cm]{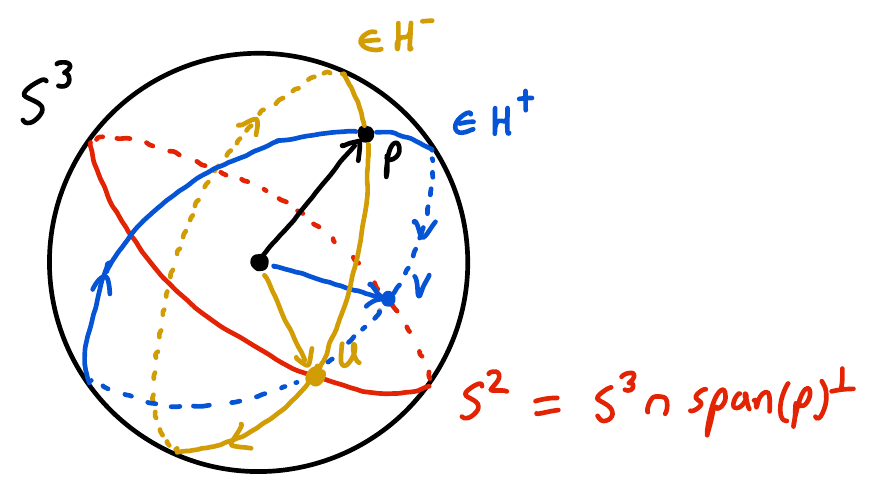}
    \end{center}
    We define $\varphi_p(u, v)$ to equal the oriented 2-plane containing the circle of agreement of $H^-$ and $H^+$. (This circle of agreement exists and is unique by \thref{opp_sign_share}).
\end{definition}

\begin{definition}[The maps $\psi_{p-}: S^2 \to S^2_-$ and $\psi_{p+}: S^2 \to S^2_+$]\thlabel{psi}
    Fix $p \in S^3$ and identify $S^2$ with $S^3 \cap \lspan(p)^\perp$. Given $u \in S^2$, we define 
    \begin{equation}
        \psi_{p-}(u) = \pi_-(\omega_{\lspan(p,u)}), \qquad \psi_{p+}(u) = \pi_+(\omega_{\lspan(p,u)}).
    \end{equation}
\end{definition}

\begin{proposition}\thlabel{S2_isometry}
    If we rescale $S^2_-$ and $S^2_+$ to have radius 1, then the maps $\psi_{p-}$ and $\psi_{p+}$ are isometries.
\end{proposition}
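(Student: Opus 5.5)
The plan is to compute the inner products directly. Fix $p \in S^3$. For unit vectors $u, v \perp p$, we relate $\langle \psi_{p\pm}(u), \psi_{p\pm}(v)\rangle$ to $\langle u, v\rangle$.

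First, recall from the proof of \thref{theta_nonequal} the two identities
\begin{equation}
\langle \pi_\pm(\omega_P), \pi_\pm(\omega_Q)\rangle = \tfrac12\bigl(\langle \omega_P,\omega_Q\rangle \pm \langle \omega_P, *\omega_Q\rangle\bigr).
\end{equation}
Take $P = \lspan(p,u)$ and $Q = \lspan(p,v)$. These planes share the nonzero vector $p$, so by \thref{nontrivial_intersection} the term $\langle \omega_P, *\omega_Q\rangle = \langle \omega_P, \omega_{Q^\perp}\rangle$ vanishes. Hence both inner products equal $\frac12 \langle \omega_P, \omega_Q\rangle$.

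By \thref{inner_product}, $\langle \omega_P,\omega_Q\rangle$ is the determinant of the orthogonal projection $P \to Q$. In the orthonormal bases $(p,u)$ of $P$ and $(p,v)$ of $Q$, this projection sends $p \mapsto p$ and $u \mapsto \langle u,p\rangle p + \langle u,v\rangle v = \langle u,v\rangle v$. Its matrix is therefore upper triangular with diagonal entries $1$ and $\langle u,v\rangle$, so the determinant is $\langle u,v\rangle$. This gives
\begin{equation}
\langle \psi_{p\pm}(u), \psi_{p\pm}(v)\rangle = \tfrac12\langle u,v\rangle.
\end{equation}

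After rescaling $S^2_\pm$ by $\sqrt2$ to radius $1$, the maps $\sqrt2\,\psi_{p\pm}$ preserve inner products between unit vectors. They therefore preserve angles, which are great-circle distances. Hence they are isometries onto their images, and in particular injective. Surjectivity follows either from the injective-continuous-map-$S^2\to S^2$ argument used in \thref{planes_containing_p}, or from the fact that a distance-preserving self-map of the compact sphere is onto.

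The only step needing care is the projection determinant. One must check the orientation convention, namely that the bases $(p,u)$ and $(p,v)$ are the positive ones used for $\omega_P$ and $\omega_Q$. This is immediate, since $\lspan(p,u)$ is oriented by that ordered pair by definition. Everything else follows from results already stated.
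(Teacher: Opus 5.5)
Your proof is correct, but it takes a different route from the paper.

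The paper's argument uses linearity. It observes that $v \mapsto \sqrt{2}\,\pi_+(p^*\wedge v^*)$ is a linear map from $p^\perp \cong \R^3$ to $E_+ \cong \R^3$. By \thref{image_of_omega} this map sends unit vectors to unit vectors, so it is a linear isometry. Preservation of inner products then follows by polarization, and bijectivity is automatic because an injective linear map between $3$-dimensional spaces is onto.

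You never use linearity. Instead you compute $\langle \psi_{p\pm}(u), \psi_{p\pm}(v)\rangle = \tfrac12 \langle u, v\rangle$ directly from three ingredients: the identity in the proof of \thref{theta_nonequal}, \thref{nontrivial_intersection} applied to two planes through $p$, and the projection-determinant definition of the inner product.

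What your approach buys is an explicit formula for the angle between images. It also exposes the mechanism behind $\theta_+ = \theta_-$ for planes containing $p$ (cf.\ \thref{planes_containing_p}). What it costs is a separate surjectivity step, which the paper gets for free from dimension counting.

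Of your two surjectivity options, the injective-continuous-map argument applies as stated. The compact-metric-space argument needs a small adjustment. Your map goes from the unit sphere in $p^\perp$ to the rescaled sphere in $E_\pm$, so it is not literally a self-map. You must first identify the two spheres by some isometry, for instance one induced by a linear isometry $E_\pm \to p^\perp$.
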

\begin{proof}
    $\psi_{p+}(v) = \pi_+(\omega_{\lspan(p,v)}) = \pi_+(p^* \wedge v^*).$
    The map $v \mapsto \sqrt{2} \cdot \pi_+(p^* \wedge v^*)$ is linear when viewed as a map $\R^3 \to \R^3$, and it's norm-preserving since it takes $S^2$ to $\sqrt{2} \cdot S^2_+$. Therefore, it's an isometry. The $\psi_{p-}$ case follows by the same argument.
\end{proof}
\begin{proposition}\thlabel{homeomorphism_compatibility}
    The map $\varphi_p: S^2 \times S^2 \to G_2^+(\R^4)$ given by \thref{hopf_homeo} and the homeomorphism $\omega: G_2^+(\R^4) \to S^2_- \times S^2_+$ given by \thref{image_of_omega} are equivalent in the sense that 
    \begin{equation}
    \omega \circ \varphi_p(u,v) = \psi_{p-}(u) + \psi_{p+}(v).
    \end{equation}
    In particular, the map $\omega \circ \varphi_p$ is an isometry (up to scaling), and $\varphi_p$ is a homeomorphism.
\end{proposition}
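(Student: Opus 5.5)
We need to show $\omega(\varphi_p(u,v)) = \psi_{p-}(u)+\psi_{p+}(v)$. Write $P=\lspan(p,u)$ and $Q=\lspan(p,v)$, and let $H^-$ and $H^+$ be as in \thref{hopf_homeo}. The plan is to locate $\omega(M_{H^-})$ and $\omega(M_{H^+})$ inside $S^2_-\times S^2_+$ using \thref{hopfs_are_slices}, and then read off their intersection point.

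First, since $H^-$ is a negative Hopf fibration, \thref{hopfs_are_slices} gives $\omega(M_{H^-}) = \{a\}\times S^2_+$ for some $a\in S^2_-$. Since $P\in M_{H^-}$, we have $\omega_P = \pi_-(\omega_P)+\pi_+(\omega_P) \in \{a\}\times S^2_+$, which forces $a=\pi_-(\omega_P)=\psi_{p-}(u)$. Likewise $H^+$ is positive, so $\omega(M_{H^+}) = S^2_-\times\{b\}$, and $Q\in M_{H^+}$ forces $b=\pi_+(\omega_Q)=\psi_{p+}(v)$. These two slices meet in exactly one point, $a+b$. By definition $\varphi_p(u,v)$ lies in $M_{H^-}\cap M_{H^+}$, so
\begin{equation}
\omega(\varphi_p(u,v)) = \psi_{p-}(u)+\psi_{p+}(v),
\end{equation}
which is the claimed identity. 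There is one bookkeeping point to check here: identifying $S^2_-\times S^2_+$ with pairs via $\alpha\mapsto(\pi_-(\alpha),\pi_+(\alpha))$ matches the notation ``$\{a\}\times S^2_+$'', since by \thref{image_of_omega} every point $\alpha$ of $\omega(G_2^+(\R^4))$ decomposes as $\alpha = \pi_-(\alpha)+\pi_+(\alpha)$.

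For the consequences, \thref{S2_isometry} says $\psi_{p-}$ and $\psi_{p+}$ are isometries onto $S^2_-$ and $S^2_+$ after rescaling. So $(u,v)\mapsto \psi_{p-}(u)+\psi_{p+}(v)$ is, up to scaling, an isometry $S^2\times S^2\to S^2_-\times S^2_+$ with respect to the product metric. In particular it is a homeomorphism onto $\im(\omega)$, which is all of $S^2_-\times S^2_+$ by \thref{image_of_omega}. Since $\omega$ is a homeomorphism onto its image, $\varphi_p = \omega^{-1}\circ(\psi_{p-}+\psi_{p+})$ is a homeomorphism.

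The main point needing care is the well-definedness implicit in \thref{hopf_homeo}. The argument above only uses that $\lspan(p,u)\in M_{H^-}$, so it does not matter which negative Hopf fibration through that plane is chosen. Uniqueness of the shared circle follows from the single intersection point of the two slices, and this is consistent with \thref{opp_sign_share}. The remaining steps are routine.
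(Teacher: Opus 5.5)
Your proof is correct and is essentially the paper's argument: both use \thref{hopfs_are_slices} to see that $\omega(M_{H^-})$ and $\omega(M_{H^+})$ are slices of the form $\{a\}\times S^2_+$ and $S^2_-\times\{b\}$, so the $\pi_-$-component of $\omega\circ\varphi_p(u,v)$ depends only on $u$ and the $\pi_+$-component only on $v$. The differences are minor. The paper identifies these components by evaluating at the diagonal, using $\varphi_p(u,u)=\lspan(p,u)$, whereas you read them off directly from $\lspan(p,u)\in M_{H^-}$ and $\lspan(p,v)\in M_{H^+}$; you also spell out the homeomorphism consequence via \thref{S2_isometry}, which the paper leaves implicit.
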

\begin{proof}
    First we show that $\pi_- \circ \omega \circ \varphi_p(u,v)$ does not depend on $v$. By \thref{hopf_homeo}, as $v$ varies over $S^2$, we have that $\varphi_p(u,v)$ varies over $M_{H^-}$, where $H^-$ is the negative Hopf fibration containing $\lspan(p,u)$. And by \thref{hopfs_are_slices}, as $\varphi_p(u,v)$ varies over $M_{H^-}$, the point $\pi_- \circ \omega \circ \varphi_p(u,v)$ stays constant. 
    A similar argument shows that $\pi_+ \circ \omega \circ \varphi_p(u,v)$ doesn't depend on $u$. So then 
    \begin{align}
        \omega \circ \varphi_p(u,v) &= \pi_- \circ \omega \circ \varphi_p(u,v) + \pi_+ \circ \omega \circ \varphi_p(u,v) \\
        &= \pi_- \circ \omega \circ \varphi_p(u,u) + \pi_+ \circ \omega \circ \varphi_p(v,v) \\
        &= \pi_-(\omega_{\lspan(p,u)}) + \pi_+(\omega_{\lspan(p,v)}) \\
        &= \psi_{p-}(u) + \psi_{p+}(v),
    \end{align}
    and the proof is complete.
\end{proof}

We can now restate \thref{distance_decreasing} in the language of the map $\varphi_p$:
\begin{theorem}\thlabel{new_formulation}
    Fix $p \in S^3$ and identify $S^2$ with $S^3 \cap \lspan(p)^\perp$. A subset of $G_2^+(\R^4)$ corresponds to a fibration of $S^3$ by great circles if and only if it's of the form $\bigl\{ \varphi_p\bigl(v, f(v)\bigr) \mid v \in S^2 \bigr\}$ or $\bigl\{ \varphi_p\bigl(f(v), v\bigr) \mid v \in S^2 \bigr\}$, where $f: S^2 \to S^2$ is a distance-decreasing map.
\end{theorem}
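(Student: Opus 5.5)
The plan is to transport \thref{distance_decreasing} through the identification of \thref{homeomorphism_compatibility}. By that proposition, $\omega \circ \varphi_p(u,v) = \psi_{p-}(u) + \psi_{p+}(v)$. By \thref{S2_isometry}, $\psi_{p-}$ and $\psi_{p+}$ are isometries onto $S^2_-$ and $S^2_+$ once those spheres are rescaled to radius $1$. Rescaling does not affect whether a map is distance-decreasing, since both spheres are scaled by the same factor $\sqrt{2}$. So $\omega\circ\varphi_p$ is $\psi_{p-}\times\psi_{p+}$, a product of isometries $S^2\times S^2 \to S^2_-\times S^2_+$, up to a uniform scale.

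For the first family, take a subset $N \subseteq G_2^+(\R^4)$. Set $\Phi = \omega\circ\varphi_p$, which is a bijection $S^2\times S^2\to S^2_-\times S^2_+$ because $\varphi_p$ is a homeomorphism and $\omega$ is a bijection onto $S^2_-\times S^2_+$ by \thref{image_of_omega}. Suppose first that $\omega(N)$ is the graph of a map $g: S^2_-\to S^2_+$. Define $f = \psi_{p+}^{-1}\circ g\circ \psi_{p-}: S^2\to S^2$. Then
\begin{equation}
\Phi\bigl(v, f(v)\bigr) = \psi_{p-}(v) + g\bigl(\psi_{p-}(v)\bigr),
\end{equation}
and as $v$ ranges over $S^2$, the point $\psi_{p-}(v)$ ranges over all of $S^2_-$. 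Hence $N = \{\varphi_p(v,f(v)) \mid v \in S^2\}$. Conversely, any set of that form has $\omega$-image equal to the graph of $g = \psi_{p+}\circ f\circ\psi_{p-}^{-1}$.

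Conjugating by isometries preserves the property of being distance-decreasing. Here "distance-decreasing" is read as in \thref{distance_decreasing}, meaning strict decrease for distinct points, as used there via $\theta_+<\theta_-$. Therefore $f$ is distance-decreasing exactly when $g$ is. The second family is handled symmetrically: graphs of maps $S^2_+\to S^2_-$ correspond to sets $\{\varphi_p(f(v),v)\}$, with $f = \psi_{p-}^{-1}\circ h\circ\psi_{p+}$. Combining these two correspondences with \thref{distance_decreasing} gives the claimed equivalence.

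The only subtle point, which I regard as the main obstacle, is bookkeeping rather than substance. One must check that $\psi_{p\pm}$ are genuinely surjective isometries onto the radius-$\tfrac{1}{\sqrt2}$ spheres, so that angle and distance comparisons transfer faithfully. \thref{S2_isometry} gives linear norm-preserving maps $\R^3\to\R^3$ after rescaling, so surjectivity onto the spheres follows. One must also ensure the metric implicit in \thref{distance_decreasing} is the angular metric $\theta_\pm$. Since the chord metric and the angular metric are monotonically related on a fixed sphere, either reading yields the same class of distance-decreasing maps.
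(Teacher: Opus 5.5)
Your proposal is correct and follows the paper's proof. Both use the compatibility formula $\omega\circ\varphi_p(u,v)=\psi_{p-}(u)+\psi_{p+}(v)$ to identify $\{\varphi_p(v,f(v)) \mid v\in S^2\}$ with the graph of $\psi_{p+}\circ f\circ\psi_{p-}^{-1}$ (and, conversely, set $f=\psi_{p+}^{-1}\circ g\circ\psi_{p-}$). Both then invoke the Gluck--Warner graph characterization, using the fact that conjugating by the isometries $\psi_{p\pm}$ preserves being distance-decreasing. The only difference is that you treat the second family $\{\varphi_p(f(v),v)\}$ explicitly, whereas the paper covers it with a ``without loss of generality''.
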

\begin{proof}
    Forward direction: Let $F$ be a fibration of $S^3$ by oriented great circles. By \thref{distance_decreasing} we have that $\omega(M_F)$ is the graph of a distance-decreasing map $S^2_- \to S^2_+$ or $S^2_+ \to S^2_-$. Let's assume the map is $S^2_- \to S^2_+$, and let's call it $g$. Then we can write $M_F = \{ \omega^{-1}(a + g(a)) \mid a \in S^2_- \}$. We will show that 
    \begin{equation}
        \{ \omega^{-1}(a + g(a)) \mid a \in S^2_- \} = \{ \varphi_p(v, \, \psi^{-1}_{p+} \circ g \circ \psi_{p-}(v)) \mid v \in S^2 \}.
    \end{equation}
    The map $\psi^{-1}_{p+} \circ g \circ \psi_{p-}: S^2 \to S^2$ is distance-decreasing by \thref{S2_isometry}, so if we can prove the above equality then we're done. By \thref{homeomorphism_compatibility} we have 
    \begin{equation}
        \{ \omega \circ \varphi_p(v, \, \psi^{-1}_{p+} \circ g \circ \psi_{p-}(v)) \mid v \in S^2 \} = \{ \psi_{p-}(v) + g \circ \psi_{p-}(v) \mid v \in S^2\}.
    \end{equation}
    Substituting $a = \psi_{p-}(v)$, we get 
    \begin{equation}
        \{a + g(a) \mid a \in S^2_-\},
    \end{equation}
    as desired. 

    Backward direction: Let $f: S^2 \to S^2$ be a distance-decreasing map, and let $N = \{ \varphi_p(v, f(v)) \mid v \in S^2 \} \subseteq G_2^+(\R^4)$. Our goal is to show that $N$ corresponds to a fibration of $S^3$ by great circles. By \thref{distance_decreasing} it suffices to show that $\omega(N)$ is the graph of a distance-decreasing map $S^2_- \to S^2_+$. We will show that
    \begin{equation}
        \omega(N) = \{ a + \psi_{p+} \circ f \circ \psi_{p-}^{-1}(a) \mid a \in S^2_- \}.
    \end{equation}
    The map $\psi_{p+} \circ f \circ \psi_{p-}^{-1}$ is distance-decreasing by \thref{S2_isometry}, so if we can prove the above equality then we're done. By \thref{homeomorphism_compatibility} we have 
    \begin{equation}
        \omega(N) = \{ \omega \circ \varphi_p(v, f(v)) \mid v \in S^2 \} = \{ \psi_{p-}(v) + \psi_{p+} \circ f(v) \mid v \in S^2 \}.
    \end{equation}
    Substituting $a = \psi_{p-}(v)$, we get 
    \begin{equation}
        \{ a + \psi_{p+} \circ f \circ \psi_{p-}^{-1}(a) \mid a \in S^2_- \},
    \end{equation}
    as desired.
\end{proof}

The statement of \thref{new_formulation} gives us hope for generalizing \thref{gluck_warner}. In their original paper, Gluck and Warner proved \thref{gluck_warner} via \thref{distance_decreasing}, which is formulated in terms of the decomposition of $G_2^+(\R^4)$ into a product of spheres, a phenomenon unique to $\R^4$. On the other hand, \thref{new_formulation} is formulated purely in terms of circle agreement of Hopf fibrations of opposite sign (\thref{opp_sign_share}). This is the motivation for our main result \thref{main_result}, which shows that existence of a shared circle has an analogue in all dimensions. The main obstacle to generalizing Gluck and Warner's result then becomes uniqueness, which as we'll see in Section~\ref{next_steps} does not hold in general.

\section{Hopf fibrations are linear}\label{hopf_linear}
Let $\mathcal{F}$ denote the set of fibrations of $S^{2n-1}$ by oriented great circles, and let $\operatorname{Map}(\R^{2n}, \R^{2n})$ denote the set of functions $\R^{2n} \to \R^{2n}$.
\begin{definition}[The map $I: \mathcal{F} \to \operatorname{Map}(\R^{2n}, \R^{2n})$]\thlabel{the_map_I}
    Let $F$ be a fibration of $S^{2n-1}$ by oriented great circles. We define $I_F$ to be the map $\R^{2n} \to \R^{2n}$ which sends each $p \in \R^{2n}$ to $p$ rotated 90 degrees in the positive direction along the oriented 2-plane containing $p$. It's clear that $I$ is injective.
    \begin{center}
        \includegraphics[width = 4.1 cm]{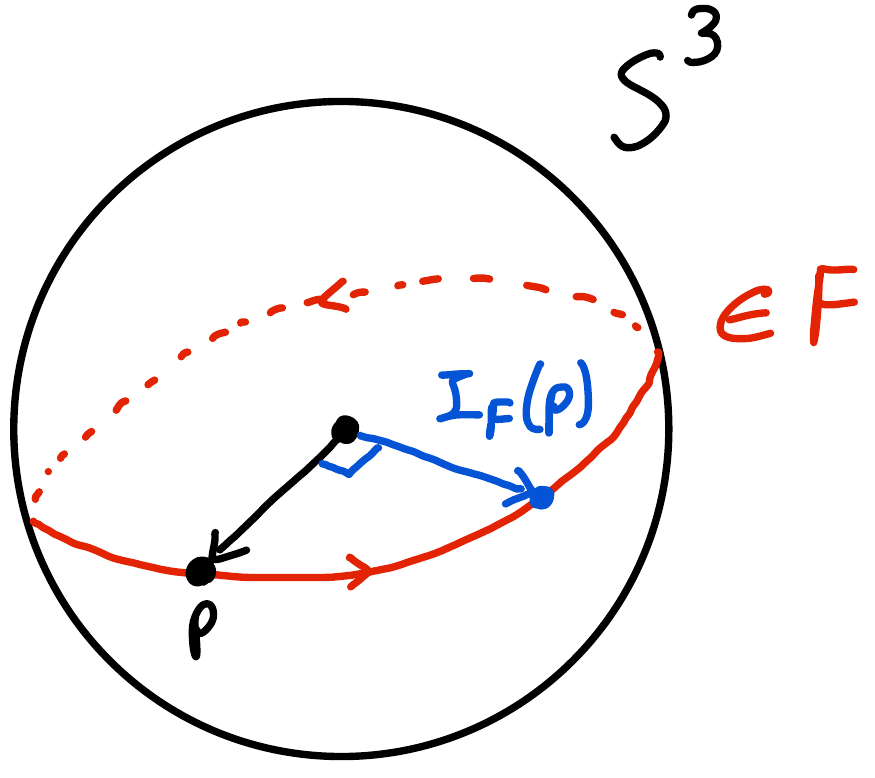}
    \end{center}
\end{definition}
The purpose of this section is to prove \thref{correspondence} which states that $I$ restricts to a bijection between Hopf fibrations and a type of linear transformation called an \emph{orthogonal complex structure}. This will ultimately allow us to reduce the problem of showing two Hopf fibrations agree on a fiber to one of showing a certain linear map has nontrivial kernel.

\begin{definition}[Complex structure]\thlabel{complex_structure}
    A complex structure on $\R^{2n}$ is a $2n \times 2n$ matrix $J$ with the property that $J^2 = -\id$.
\end{definition}

\begin{lemma}\thlabel{complexStructureEquiv}
    Let $J$ be a square matrix. Then any two of the following implies the third: 
    \begin{enumerate}
        \item $J$ is a complex structure.
        \item $J$ is skew-symmetric.
        \item $J$ is orthogonal.
    \end{enumerate}
\end{lemma}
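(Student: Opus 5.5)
The plan is to check the three pairwise implications directly, writing each property as a matrix identity. Property (1) says $J^2 = -\id$, property (2) says $J^T = -J$, and property (3) says $J^TJ = \id$, or equivalently $J^T = J^{-1}$. Each implication should then follow in one line of algebra, so no earlier result from the paper is needed.

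First, assume (1) and (2). From $J^T = -J$ we get $J^TJ = -J^2 = -(-\id) = \id$, so $J$ is orthogonal, which is (3).

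Second, assume (1) and (3). From $J^2 = -\id$, the matrix $J$ is invertible with $J^{-1} = -J$. Orthogonality gives $J^T = J^{-1} = -J$, so $J$ is skew-symmetric, which is (2).

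Third, assume (2) and (3). Orthogonality gives $J^TJ = \id$, and substituting $J^T = -J$ yields $-J^2 = \id$, that is, $J^2 = -\id$, which is (1).

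There is no real obstacle here. The only point needing care is to use the correct characterization of orthogonality, namely that $J^T = J^{-1}$ holds for square matrices. This lets us pass freely between $J^TJ = \id$ and $JJ^T = \id$ in the second implication.
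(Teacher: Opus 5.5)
Your proof is correct and is essentially the paper's argument. Both are direct matrix algebra; the paper just packages your three cases into the identities $A = JB - C$, $B = AJ^T - JC$, $C = JB - A$, where $A = J^2 + \id$, $B = J + J^T$, $C = JJ^T - \id$.
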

\begin{proof}
    Let
    \begin{align}
        A &= J^2 + \id, \\
        B &= J + J^T, \\
        C &= JJ^T - \id.
    \end{align}
    Conditions (1), (2), and (3) in the lemma are saying that $A$, $B$, and $C$ are 0, respectively. The claim then follows from the fact that $A = JB-C$, $B = AJ^T - JC$, and $C = JB-A$.
\end{proof}

\begin{definition}[Orthogonal complex structure]\thlabel{orthogonal_complex_structure}
    A matrix satisfying the hypotheses of \thref{complexStructureEquiv} is called an orthogonal complex structure.
\end{definition}

\begin{lemma}\thlabel{ocs_conjugation}
    Let $J: \R^{2n} \to \R^{2n}$ be an orthogonal complex structure, and let $T: \R^{2n} \to \R^{2n}$ be an orthogonal transformation. Then $TJT^{-1}$ is an orthogonal complex structure.
\end{lemma}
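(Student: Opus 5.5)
By \thref{complexStructureEquiv}, it suffices to verify any two of the three defining conditions for $TJT^{-1}$. The plan is to check that $TJT^{-1}$ is a complex structure and that it is orthogonal. Skew-symmetry then follows automatically from the lemma, though it is also easy to check directly.

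First, we show that $TJT^{-1}$ squares to $-\id$. We compute
\begin{equation}
(TJT^{-1})^2 = TJT^{-1}TJT^{-1} = TJ^2T^{-1} = T(-\id)T^{-1} = -\id,
\end{equation}
using only that $J$ is a complex structure in the sense of \thref{complex_structure}. This step needs nothing about $T$ beyond invertibility.

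Second, we show that $TJT^{-1}$ is orthogonal. It is a product of the three orthogonal matrices $T$, $J$ and $T^{-1}$. The matrix $J$ is orthogonal by \thref{orthogonal_complex_structure}, $T$ is orthogonal by hypothesis, and the inverse of an orthogonal matrix is orthogonal. Products of orthogonal matrices are orthogonal, so $TJT^{-1}$ is orthogonal.

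Alternatively, skew-symmetry can be checked directly. Since $T^{-1} = T^T$, we have
\begin{equation}
(TJT^{-1})^T = (TJT^T)^T = TJ^TT^T = T(-J)T^{-1} = -TJT^{-1}.
\end{equation}
There is no real obstacle here. The only point needing care is to use $T^{-1}=T^T$ wherever transposes appear.
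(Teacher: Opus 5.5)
Your proof is correct and follows essentially the same argument as the paper: $TJT^{-1}$ is orthogonal as a product of orthogonal matrices, and $(TJT^{-1})^2 = TJ^2T^{-1} = -\mathrm{id}$. Those two conditions already make it an orthogonal complex structure, so your direct check of skew-symmetry is a correct but redundant extra.
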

\begin{proof}
    $TJT^{-1}$ is orthogonal since $J$ and $T$ are orthogonal, and it's a complex structure since $(TJT^{-1})^2 = TJ^2T^{-1} = -TT^{-1} = -\id$.
\end{proof}

\begin{lemma}\thlabel{conjugation}
    Let $F$ be a fibration of $S^{2n-1}$ by oriented great circles, and let $T: \R^{2n} \to \R^{2n}$ be an orthogonal transformation. Then $I_{T(F)} = TI_FT^{-1}$.
\end{lemma}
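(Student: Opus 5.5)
The plan is to verify the identity $I_{T(F)} = TI_FT^{-1}$ pointwise, using only the geometric description of $I_F$ in \thref{the_map_I} and the fact that an orthogonal map preserves lengths, angles, and (by definition of how $T$ acts on oriented circles) the orientation of each transported fiber. Fix $p \in \R^{2n}$. The case $p=0$ is trivial, since $I$ of any fibration sends $0$ to $0$ and $T$ is linear. For $p\neq 0$ we would like to compute $I_{T(F)}(p)$. Write $q = T^{-1}p$, and let $P$ be the oriented 2-plane of $F$ containing $q$, that is, the plane positively containing the fiber of $F$ through $q/\|q\|$.

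The first step is to identify the fiber of $T(F)$ through $p$. By definition, the fibers of $T(F)$ are the images $T(C)$ of the fibers $C$ of $F$, with orientation pushed forward by $T$. Hence the oriented plane of $T(F)$ through $p$ is $T(P)$, oriented so that if $(u,v)$ is a positive basis of $P$ then $(Tu,Tv)$ is a positive basis of $T(P)$.

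The second step is to note that $I_F(q)$ is the unique vector $w \in P$ with $\|w\|=\|q\|$, $\langle w,q\rangle = 0$, and $(q,w)$ positively oriented in $P$. Applying $T$: the vector $Tw$ lies in $T(P)$, has $\|Tw\| = \|p\|$ and $\langle Tw, p\rangle = \langle w, q\rangle = 0$ because $T$ is orthogonal, and $(p, Tw) = (Tq, Tw)$ is positive in $T(P)$ by the choice of orientation above. By the same uniqueness characterization applied to $T(F)$, this gives $I_{T(F)}(p) = Tw = T I_F (T^{-1} p)$.

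I expect no real obstacle. The only point requiring care is to make explicit that "the $90^\circ$ rotation in the positive direction" is characterized by norm, orthogonality and orientation within the plane, and that $T$ transports all three. It matters here that $T$ may have negative determinant: the orientation of $T(P)$ is by definition the pushforward orientation, so no sign issue arises even when $\det T<0$.
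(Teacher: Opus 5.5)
Your proof is correct and follows the paper's argument: both identify the fiber of $T(F)$ through $T(v)$ as $T(P)$ with the pushed-forward orientation and conclude that $T$ carries the positive $90^\circ$ rotation in $P$ to the one in $T(P)$. You additionally justify that last step via the norm, orthogonality and orientation characterization, which the paper simply asserts.
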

\begin{proof}
    Fix $v \in \R^{2n}$ and let $P$ be the oriented 2-plane in $F$ containing $v$. Then $T(P)$ is the oriented 2-plane in $T(F)$ containing $T(v)$. We observe that 
    \begin{align}
        I_{T(F)}(T(v)) &= [\text{$T(v)$ rotated 90 degrees along $T(P)$}] \\
        &= T([\text{$v$ rotated 90 degrees along $P$}]) \\
        &= T(I_F(v)).
    \end{align}
    Substituting $u = T(v)$, we get $I_{T(F)}(u) = T(I_F(T^{-1}(u)))$. Since $v$ was arbitrary, we conclude that $I_{T(F)} = TI_FT^{-1}$.
\end{proof}

\begin{definition}[Standard complex structure]\thlabel{standard_complex_structure}
    Let $H_0$ be the standard Hopf fibration on $S^{2n-1}$. Then identifying $\R^{2n}$ with $\C^n$, we have that $I_{H_0}: \R^{2n} \to \R^{2n}$ is the map given by multiplication by $i$. In other words, $I_{H_0}$ is the following $2n \times 2n$ real matrix:
    \begin{equation}
        \begin{bmatrix}
            0 & -1 &  &  &  \\
            1 & 0 &  &  &  \\
             &  & \ddots &  &  \\
             &  &  & 0 & -1 \\
             &  &  & 1 & 0 \\
        \end{bmatrix}.
    \end{equation}
    We call this the standard complex structure on $\R^{2n}$. 
\end{definition}

\begin{proposition}\thlabel{hopf_implies_ocs}
    Let $H$ be a Hopf fibration. Then $I_H$ is an orthogonal complex structure.
\end{proposition}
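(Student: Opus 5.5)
By \thref{transformation_sign}, a Hopf fibration has the form $H = T(H_0)$, where $H_0$ is the standard Hopf fibration and $T: \R^{2n} \to \R^{2n}$ is an orthogonal linear transformation. The plan is to reduce to the standard case, check the claim there directly, and then transport it back along $T$.

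First, we apply \thref{conjugation} to $F = H_0$. This gives
\begin{equation}
I_H = I_{T(H_0)} = T I_{H_0} T^{-1}.
\end{equation}

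Second, we check that $I_{H_0}$ is an orthogonal complex structure. By \thref{standard_complex_structure}, $I_{H_0}$ is the block-diagonal matrix with $2\times 2$ blocks $\begin{bmatrix} 0 & -1 \\ 1 & 0 \end{bmatrix}$. Each block squares to $-\id$, so $I_{H_0}^2 = -\id$. Each block is also skew-symmetric, so $I_{H_0}$ is skew-symmetric. By \thref{complexStructureEquiv}, these two properties make $I_{H_0}$ an orthogonal complex structure.

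Third, we apply \thref{ocs_conjugation} with $J = I_{H_0}$ and the orthogonal map $T$. It says $T I_{H_0} T^{-1}$ is an orthogonal complex structure. By the first step this matrix is $I_H$, which proves the proposition.

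None of the steps is a serious obstacle. The one point that needs care is the identification of $I_{H_0}$ with multiplication by $i$ in \thref{standard_complex_structure}. For $p \neq 0$, the fiber of $H_0$ through $p$ lies in the oriented plane $\lspan_\R(p, ip)$, oriented counterclockwise by the $S^1$ action. Rotating $p$ by 90 degrees in the positive direction in this plane gives $e^{i\pi/2}p = ip$. At $p=0$ both $I_{H_0}$ and multiplication by $i$ give $0$. So $I_{H_0}$ is the linear map $p \mapsto ip$.
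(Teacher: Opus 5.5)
Your proposal is correct and follows the paper's proof: write $H = T(H_0)$, use \thref{conjugation} to get $I_H = T I_{H_0} T^{-1}$, and apply \thref{ocs_conjugation} to the standard complex structure. Your explicit check that $I_{H_0}$ is an orthogonal complex structure and that it equals multiplication by $i$ just fills in details the paper takes for granted.
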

\begin{proof}
    By \thref{transformation_sign} we have $H = T(H_0)$, where $H_0$ is the standard Hopf fibration and $T: \R^{2n} \to \R^{2n}$ is an orthogonal map. By \thref{standard_complex_structure}, $I_{H_0}$ is an orthogonal complex structure, and by \thref{ocs_conjugation}, $TI_{H_0}T^{-1}$ is one as well. By \thref{conjugation} we have $TI_{H_0}T^{-1} = I_H$, and we're done. 
\end{proof}


Let $\mathcal{H} \subseteq \mathcal{F}$ be the set of Hopf fibrations on $S^{2n-1}$. \thref{hopf_implies_ocs} tells us that $\im(I|_\mathcal{H})$ is contained in the set of orthogonal complex structures. The remainder of this section will be devoted to proving \thref{ocs_implies_hopf}, which gives the other direction: The set of orthogonal complex structures is contained in $\im(I|_\mathcal{H})$.

\begin{lemma}\thlabel{Iinvariant}
    Let $J$ be a complex structure on $\R^{2n}$, and let $v \in \R^{2n}$ be a nonzero vector. Then $\lspan\{v, Jv\}$ is a 2-dimensional $J$-invariant subspace.
\end{lemma}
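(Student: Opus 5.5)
We must show two things: that $\lspan\{v, Jv\}$ has dimension exactly $2$, and that it is mapped into itself by $J$. The plan is a direct argument using only the defining identity $J^2 = -\id$ from \thref{complex_structure}.

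\textbf{$J$-invariance.} Any element of the span has the form $av + bJv$ with $a,b \in \R$. Applying $J$ and using linearity gives
\begin{equation}
    J(av + bJv) = aJv + bJ^2v = -bv + aJv,
\end{equation}
which again lies in $\lspan\{v, Jv\}$. So the subspace is $J$-invariant.

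\textbf{Dimension two.} Since $v \neq 0$, the span has dimension at least $1$, so it suffices to show that $v$ and $Jv$ are linearly independent. Suppose instead that $Jv = \lambda v$ for some real $\lambda$. Then $J^2 v = \lambda Jv = \lambda^2 v$. On the other hand $J^2 v = -v$, so $(\lambda^2 + 1)v = 0$. Because $v \neq 0$, this forces $\lambda^2 = -1$, which has no real solution. This contradiction shows $Jv$ is not a multiple of $v$, hence the span is $2$-dimensional.

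The only step needing any thought is ruling out a real eigenvalue, and the identity $J^2 = -\id$ handles it immediately. No orthogonality of $J$ is required.
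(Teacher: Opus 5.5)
Your proof is correct and matches the paper's argument for invariance, which applies $J$ to the span and uses $J^2 = -\id$. You also explicitly check that $v$ and $Jv$ are linearly independent, which the paper's one-line proof leaves unstated; the same no-real-eigenvalue argument appears in the paper only later, in the proof of the orthogonal decomposition lemma.
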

\begin{proof}
    $J(\lspan\{v, Jv\}) = \lspan\{Jv, J^2v\} = \lspan\{Jv, -v\} = \lspan\{v, Jv\}.$
\end{proof}


\begin{lemma}\thlabel{orthogonalDecomposition}
    Let $J$ be a complex structure on $\R^{2n}$. Then we may express $\R^{2n}$ as a direct sum of $J$-invariant 2-dimensional subspaces. If $J$ is orthogonal, then these subspaces may be chosen to be mutually orthogonal.
\end{lemma}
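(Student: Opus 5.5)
The plan is induction on $n$, splitting off one $J$-invariant plane at a time by means of \thref{Iinvariant}. For $n=1$ there is nothing to prove: pick any nonzero $v$, and \thref{Iinvariant} says $\lspan\{v,Jv\}$ is a $J$-invariant $2$-dimensional subspace, which must then be all of $\R^2$.

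For the general (not necessarily orthogonal) case, I build the summands greedily. Suppose we already have $J$-invariant planes $P_1,\dots,P_k$ whose sum $W=P_1\oplus\cdots\oplus P_k$ is direct and proper. Choose $v\notin W$ and set $P_{k+1}=\lspan\{v,Jv\}$. The key check is that $P_{k+1}\cap W=0$. If $w=av+bJv\in W$ is nonzero, then $(a,b)\neq(0,0)$, and $Jw=aJv-bv$ also lies in $W$ because $W$ is $J$-invariant. Then
\begin{equation}
    aw-bJw=(a^2+b^2)v \in W,
\end{equation}
which forces $v\in W$, a contradiction. So the sum $W\oplus P_{k+1}$ is again direct and $J$-invariant, of dimension two larger. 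Repeating until the dimension reaches $2n$ gives the decomposition.

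For the orthogonal case, I modify the step by choosing $v\in W^\perp$ nonzero. The point is that $W^\perp$ is $J$-invariant whenever $W$ is. Take $x\in W^\perp$ and $w\in W$. Since $J$ is skew-symmetric by \thref{complexStructureEquiv},
\begin{equation}
    \langle Jx,w\rangle=-\langle x,Jw\rangle=0,
\end{equation}
because $Jw\in W$. Hence $Jv\in W^\perp$, so $P_{k+1}=\lspan\{v,Jv\}\subseteq W^\perp$ is automatically orthogonal to all previous $P_i$. It is $2$-dimensional and $J$-invariant by \thref{Iinvariant}.

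The only step needing real care is the directness argument $P_{k+1}\cap W=0$ in the non-orthogonal case, which is the $(a^2+b^2)v$ trick above. In the orthogonal case the skew-symmetry observation makes directness automatic. Optionally, one can also normalize $v$ to be a unit vector; orthogonality of $J$ then makes $(v,Jv)$ an orthonormal basis of each plane, because $\langle v,Jv\rangle=0$ by skew-symmetry. This is presumably what is used later to compare $J$ with the standard complex structure.
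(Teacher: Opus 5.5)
Your proof is correct and follows the paper's argument: the same greedy construction of planes $\lspan\{v,Jv\}$, taking $v \notin W$ in general and $v \perp W$ in the orthogonal case. The differences are only in two local checks. To get $P_{k+1}\cap W=0$ you use the identity $aw-bJw=(a^2+b^2)v$, whereas the paper observes that a nontrivial intersection would be a one-dimensional $J$-invariant subspace, giving a real eigenvalue $\lambda$ with $\lambda^2=-1$. To get $Jv\perp W$ you use skew-symmetry, whereas the paper uses that $J$ is an isometry with $J(W)=W$.
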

\begin{proof}
    We construct our subspaces inductively. Suppose we have already chosen a set of 2-dimensional $J$-invariant subspaces $\{W_1, \cdots, W_k\}$. If $k = n$ then we are done. Otherwise, let $v$ be a nonzero vector not in $W_1 + \cdots + W_k$, and let $W_{k+1} = \lspan\{v, Jv\}$, which is $J$-invariant by \thref{Iinvariant}. It remains to show that $W_{k+1}$ and $W_1 + \cdots + W_k$ intersect trivially. Suppose otherwise. Then the intersection would be a 1-dimensional $J$-invariant subspace, meaning $J$ has an eigenvalue $\lambda \in \R$. Since $J^2 = -\id$, we have $\lambda^2 = -1$, a contradiction.
    
    In the case that $J$ is orthogonal, the argument is essentially the same: Let $v$ be a nonzero vector orthogonal to $W_1 + \cdots + W_k$, and let $W_{k+1} = \lspan\{v, Jv\}$, which is $J$-invariant by \thref{Iinvariant}. Since $J$ is an orthogonal transformation, $Jv$ is orthogonal to $J(W_1 + \cdots + W_k) = W_1+ \cdots + W_k$. Therefore, $W_{k+1}$ is a 2-dimensional $J$-invariant subspace orthogonal to $W_1 + \cdots + W_k$.
\end{proof}

\begin{remark}
    The above lemma shows that $\R^n$ admits a complex structure if and only if $n$ is even.
\end{remark}

\begin{lemma}\thlabel{conjugate_to_standard}
    All orthogonal complex structures are orthogonally conjugate to the standard complex structure.
\end{lemma}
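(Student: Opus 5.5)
The plan is to use \thref{orthogonalDecomposition} to build an orthonormal basis adapted to $J$, and then observe that in this basis $J$ is exactly the standard complex structure. The change-of-basis matrix will be the required orthogonal conjugator.

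Let $J$ be an orthogonal complex structure on $\R^{2n}$. By \thref{orthogonalDecomposition}, $\R^{2n} = W_1 \oplus \cdots \oplus W_n$, where the $W_k$ are mutually orthogonal, 2-dimensional, and $J$-invariant. From the proof of that lemma we may take $W_k = \lspan\{v_k, Jv_k\}$ and normalize so that $\|v_k\| = 1$.

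The key step is to show that $\{v_k, Jv_k\}$ is an orthonormal basis of $W_k$.
\begin{itemize}
\item Since $J$ is orthogonal, $\|Jv_k\| = \|v_k\| = 1$.
\item Since $J$ is skew-symmetric by \thref{complexStructureEquiv}, $\langle v_k, Jv_k\rangle = -\langle Jv_k, v_k\rangle$, so this inner product is $0$.
\end{itemize}
Together with the mutual orthogonality of the $W_k$, this shows that
\begin{equation}
    (v_1, Jv_1, v_2, Jv_2, \ldots, v_n, Jv_n)
\end{equation}
is an orthonormal basis of $\R^{2n}$.

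Next we compute $J$ in this basis. We have $J(v_k) = Jv_k$ and $J(Jv_k) = -v_k$. So each $2\times 2$ diagonal block is $\begin{bmatrix} 0 & -1 \\ 1 & 0 \end{bmatrix}$, which is exactly the matrix in \thref{standard_complex_structure}.

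Now let $T$ be the matrix whose columns are $v_1, Jv_1, \ldots, v_n, Jv_n$. It is orthogonal, and the computation above says $T^{-1} J T = J_0$, where $J_0$ is the standard complex structure. Equivalently, $J = T J_0 T^{-1}$, which proves the claim.

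There is no serious obstacle. The only point needing care is that the decomposition must come from vectors of the form $\{v, Jv\}$, so that the basis within each block is orthonormal and not merely a spanning set. The proof of \thref{orthogonalDecomposition} constructs the $W_k$ in exactly this form. If one preferred not to rely on that construction, one could instead pick any unit $v_k \in W_k$ and use $J$-invariance to get $Jv_k \in W_k$; the same orthonormality argument then applies.

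Note that $\det T$ may be negative. The statement only asks for an orthogonal conjugator, so this is allowed. This is presumably also where the sign of the corresponding Hopf fibration will enter later.
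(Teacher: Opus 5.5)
Your proposal is correct and follows the paper's proof. The paper also takes the orthogonal $J$-invariant decomposition from \thref{orthogonalDecomposition}, chooses an arbitrary unit vector $e_{2k-1} \in W_k$ (your fallback option), sets $e_{2k} = J e_{2k-1}$, and uses orthogonality and skew-symmetry of $J$ to conclude that $\{e_1, \ldots, e_{2n}\}$ is an orthonormal basis in which $J$ is the standard complex structure.
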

\begin{proof}
    Let $J$ be an orthogonal complex structure on $\R^{2n}$. To show that $J$ is orthogonally conjugate to the standard complex structure, it suffices to construct an orthonormal basis $\{e_1, \cdots, e_{2n}\}$ such that for all $k$ from 1 to $n$ we have $J(e_{2k-1}) = e_{2k}$ and $J(e_{2k}) = -e_{2k-1}$. 

    By \thref{orthogonalDecomposition}, we can write $\R^{2n} = W_1 \oplus \cdots \oplus W_n$, where each $W_k$ is a 2-dimensional $J$-invariant subspace, and the $W_k$ are mutually orthogonal. For all $k$ from 1 to $n$, let $e_{2k-1} \in W_k$ be an arbitrary unit vector, and let $e_{2k} = J(e_{2k-1})$. Since $J$ is an orthogonal transformation, $e_{2k}$ is a unit vector. Since $J$ is skew-symmetric, $e_{2k}$ is orthogonal to $e_{2k-1}$. Since $J^2 = -\id$, we have $J(e_{2k}) = -e_{2k-1}$. Finally, the set of vectors $\{e_1, \cdots, e_{2n}\}$ is orthonormal because the $W_k$ are mutually orthogonal and $\{e_{2k-1}, e_{2k}\}$ is an orthonormal basis for $W_k$ for each $k$.
\end{proof}

\begin{proposition}\thlabel{ocs_implies_hopf}
    Let $J$ be an orthogonal complex structure. Then there exists a Hopf fibration $H$ such that $I_H = J$.
\end{proposition}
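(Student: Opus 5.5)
By \thref{conjugate_to_standard}, there is an orthogonal transformation $T: \R^{2n} \to \R^{2n}$ such that $J = T J_0 T^{-1}$, where $J_0$ is the standard complex structure of \thref{standard_complex_structure}. We would take $H = T(H_0)$, where $H_0$ is the standard Hopf fibration. By \thref{transformation_sign}, $H$ is a Hopf fibration, since $T$ is orthogonal; its sign is simply whatever the sign of $\det T$ happens to be.

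Next we compute $I_H$ using \thref{conjugation}:
\begin{equation}
    I_H = I_{T(H_0)} = T I_{H_0} T^{-1}.
\end{equation}
By \thref{standard_complex_structure}, $I_{H_0}$ is exactly the standard complex structure $J_0$. Therefore
\begin{equation}
    I_H = T J_0 T^{-1} = J,
\end{equation}
which is the desired conclusion.

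The only step needing any care is matching conventions. \thref{conjugate_to_standard} is phrased as "orthogonally conjugate", and its proof produces an orthonormal basis $\{e_1, \dots, e_{2n}\}$ with $J e_{2k-1} = e_{2k}$ and $J e_{2k} = -e_{2k-1}$. We would let $T$ be the orthogonal matrix whose columns are $e_1, \dots, e_{2n}$, so that $T$ sends the standard basis to this one. Then $T^{-1} J T = J_0$, because $J_0$ acts on the standard basis by the same rule, and hence $J = T J_0 T^{-1}$ with the conjugation in the direction used above. Beyond this bookkeeping, no real obstacle is expected.
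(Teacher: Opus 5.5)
Your proof is correct and follows the paper's argument exactly: you write $J = TI_{H_0}T^{-1}$ using \thref{conjugate_to_standard}, take $H = T(H_0)$, and apply \thref{conjugation}. Your check of the direction of conjugation, with $T$ having the adapted orthonormal basis as its columns, is correct bookkeeping that the paper leaves implicit.
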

\begin{proof}
    By \thref{conjugate_to_standard} we can write $J = TI_{H_0}T^{-1}$, where $H_0$ is the standard complex structure and $T$ is an orthogonal transformation. By \thref{conjugation}, this equals $I_{T(H_0)}$. Letting $H = T(H_0)$ completes the proof.
\end{proof}

\begin{theorem}\thlabel{correspondence}
    The map $I: \mathcal{F} \to \operatorname{Map}(\R^{2n}, \R^{2n})$ restricts to a bijection between the set of Hopf fibrations of $S^{2n-1}$ and the set of orthogonal complex structures on $\R^{2n}$.
\end{theorem}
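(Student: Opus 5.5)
The theorem is essentially an assembly of results already established in this section, so the plan is to check three things: that $I$ sends Hopf fibrations into orthogonal complex structures, that the restriction is injective, and that it is surjective onto orthogonal complex structures.

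\emph{Well-definedness of the restricted map.} For any Hopf fibration $H$, \thref{hopf_implies_ocs} shows that $I_H$ is an orthogonal complex structure. Hence $I|_{\mathcal{H}}$ is a map from $\mathcal{H}$ into the set of orthogonal complex structures on $\R^{2n}$.

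\emph{Injectivity.} This is the injectivity of $I$ on all of $\mathcal{F}$, noted in \thref{the_map_I}, and I would make that remark precise. Given $F \in \mathcal{F}$ and a unit vector $p \in S^{2n-1}$, the oriented fiber through $p$ is the great circle in the oriented plane $\lspan(p, I_F(p))$, traversed in the direction from $p$ toward $I_F(p)$. Thus $I_F$ determines every oriented fiber, and so it determines $F$. If $I_{F_1} = I_{F_2}$, then the fibrations have the same oriented fiber through every point, so $F_1 = F_2$.

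\emph{Surjectivity.} This is exactly \thref{ocs_implies_hopf}: every orthogonal complex structure $J$ equals $I_H$ for some Hopf fibration $H$. Its proof uses \thref{conjugate_to_standard} to write $J = T I_{H_0} T^{-1}$ with $T$ orthogonal, and then \thref{conjugation} to identify this with $I_{T(H_0)}$.

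Combining the three steps gives the bijection. The only point needing any care is injectivity, specifically the remark that $I_F$ recovers the orientation of each fiber and not just the plane containing it. This follows because rotating by $90$ degrees in the positive direction fixes the orientation via the ordered basis $(p, I_F(p))$. I expect no genuine obstacle.
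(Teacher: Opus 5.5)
Your proposal is correct and follows the paper's proof: injectivity of $I$ from \thref{the_map_I}, the image of Hopf fibrations lying among orthogonal complex structures by \thref{hopf_implies_ocs}, and surjectivity by \thref{ocs_implies_hopf}. Your check that $I_F$ recovers each oriented fiber, namely the plane $\lspan(p, I_F(p))$ oriented by $(p, I_F(p))$, just spells out what the paper calls clear.
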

\begin{proof}
    We observed in \thref{the_map_I} that $I$ is injective. By Propositions \ref{hopf_implies_ocs} and \ref{ocs_implies_hopf}, the image of $I|_\mathcal{H}$ is equal to the set of orthogonal complex structures, and thus we get the desired bijection.
\end{proof}


\section{Agreement of Hopf fibrations on fibers}\label{hopf_agreement}
The correspondence established in \thref{correspondence} gives rise to the following proposition:
\begin{proposition}\thlabel{win_condition}
    Let $H_1$ and $H_2$ be Hopf fibrations on $S^{2n-1}$. Then $H_1$ and $H_2$ share an oriented fiber if and only if $I_{H_1} - I_{H_2}$ has nontrivial kernel.
\end{proposition}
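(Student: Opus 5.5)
The plan is to unwind the definition of $I$ from \thref{the_map_I} and use that, by \thref{correspondence}, $I_{H_1}$ and $I_{H_2}$ are orthogonal complex structures. The key observation is that for a Hopf fibration $H$ and a nonzero vector $p$, the oriented fiber through $p/\|p\|$ is exactly the oriented plane $\lspan(p, I_H p)$, with orientation given by the ordered basis $(p, I_H p)$. This holds because $I_H p$ is $p$ rotated by 90 degrees in the positive direction within that fiber's plane. So an oriented fiber determines, and is determined by, any nonzero $p$ in it together with the vector $I_H p$.

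For the forward direction, suppose $H_1$ and $H_2$ share an oriented fiber, contained in an oriented plane $P$. Pick any unit vector $p \in P$. In each fibration, $I_{H_j} p$ is the same vector: the rotation of $p$ by 90 degrees positively in $P$. Hence $(I_{H_1} - I_{H_2})p = 0$ with $p \neq 0$, so the kernel is nontrivial.

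For the backward direction, suppose $p \neq 0$ satisfies $I_{H_1}p = I_{H_2}p =: q$. By \thref{Iinvariant} and skew-symmetry, $q \ne 0$ and $q \perp p$, so $P = \lspan(p,q)$ is a 2-plane. The fiber of $H_j$ through $p/\|p\|$ lies in $P$, and its positive orientation is the one for which $q$ is $p$ rotated by $+90$ degrees, i.e. the orientation given by $(p,q)$. The same reasoning applies to $H_2$, so both fibrations contain $P$ with the same orientation, which is a shared oriented fiber.

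I don't expect a genuine obstacle. The only point needing care is that the orientation of a fiber is recovered from $I_H$: the positive orientation of the fiber plane through $p$ is precisely the one in which $(p, I_H p)$ is a positive basis. This follows directly from the definition of $I_H$ as positive rotation by 90 degrees, and should be spelled out explicitly.
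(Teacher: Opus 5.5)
Your proof is correct and follows the paper's argument essentially verbatim. The forward direction evaluates both $I_{H_j}$ at a point of the shared oriented circle, and the backward direction recovers the oriented fiber through $p$ as $\lspan(p, I_{H_j}p)$ oriented by $(p, I_{H_j}p)$, which you spell out more explicitly than the paper does. The appeal to the correspondence with orthogonal complex structures is unnecessary, since $I_H p \neq 0$ and $I_H p \perp p$ follow directly from the definition of $I_H$ as a $90$-degree rotation.
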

\begin{proof}
    Forward direction: Assume that $H_1$ and $H_2$ share an oriented great circle $C \subseteq S^{2n-1} \subseteq \R^{2n}$. Let $p \in C$ be an arbitrary unit vector. By \thref{the_map_I} we have
    \begin{equation}
        I_{H_1}(p) = I_{H_2}(p) = [\text{$p$ rotated 90 degrees along $C$}].
    \end{equation}
    This means $(I_{H_1} - I_{H_2})(p) = 0$, and therefore $p \in \ker(I_{H_1} - I_{H_2})$.

    Backward direction: Assume that $\ker(I_{H_1} - I_{H_2})$ is nontrivial. Choose a unit vector $p$ for which $(I_{H_1} - I_{H_2})(p) = 0$. Let $C_1$ and $C_2$ be the oriented great circles in $H_1$ and $H_2$ containing $p$, respectively. Since $I_{H_1}(p) = I_{H_2}(p)$, \thref{the_map_I} tells us that
    \begin{equation}
        [\text{$p$ rotated 90 degrees along $C_1$}] = [\text{$p$ rotated 90 degrees along $C_2$}].
    \end{equation}
    It follows that $C_1 = C_2$, and we're done.
\end{proof}
In this section the goal is to prove our main result, \thref{main_result}. The strategy is to use \thref{win_condition} to translate the result from a fact about Hopf fibrations into a fact about orthogonal complex structures, which we prove in \thref{oppositeCircles} using linear algebra.

\begin{definition}[Sign of a complex structure]\thlabel{signCheck}
    Let $J$ be a complex structure on $\R^{2n}$. By \thref{orthogonalDecomposition} we can write $\R^{2n} = W_1 \oplus \cdots \oplus W_n$, where each $W_k$ is a 2-dimensional $J$-invariant subspace. For each $k \in \{1, \cdots, n\}$, choose a nonzero vector $e_k \in W_k$, and let $Q$ be the $2n \times 2n$ matrix given by 
    \begin{equation}
        Q = \begin{bmatrix}
            e_1 & Je_1 & \cdots & e_n & Je_n
        \end{bmatrix},
    \end{equation}
    where each entry is viewed as a column vector. Then we define the sign of $J$ to equal the sign of the determinant of $Q$. It's easy to see that this notion of sign is compatible with the one given in \thref{sign} in the sense that if $H$ is a Hopf fibration, then $H$ and $I_H$ have the same sign.
\end{definition}

\begin{proposition}\thlabel{signOfNegative}
    Let $J$ be a complex structure on $\R^{2n}$. If $n$ is even, then $J$ and $-J$ have the same sign. If $n$ is odd, then $J$ and $-J$ have opposite signs.
\end{proposition}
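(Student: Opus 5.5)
We will compute the sign of $-J$ directly from \thref{signCheck}, using the same decomposition that was used for $J$. By \thref{orthogonalDecomposition}, write $\R^{2n} = W_1 \oplus \cdots \oplus W_n$ with each $W_k$ a 2-dimensional $J$-invariant subspace. Each $W_k$ is also $(-J)$-invariant, since $(-J)(W_k) = J(W_k) = W_k$. Moreover $(-J)^2 = J^2 = -\id$, so $-J$ is a complex structure. Hence the same decomposition and the same choice of nonzero vectors $e_k \in W_k$ are admissible in \thref{signCheck} for computing the sign of $-J$.

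Let $Q = \begin{bmatrix} e_1 & Je_1 & \cdots & e_n & Je_n \end{bmatrix}$ be the matrix defining the sign of $J$. The corresponding matrix for $-J$ is
\begin{equation}
    Q' = \begin{bmatrix} e_1 & -Je_1 & \cdots & e_n & -Je_n \end{bmatrix}.
\end{equation}
This is obtained from $Q$ by negating exactly $n$ columns, namely the even-indexed ones. By multilinearity of the determinant in the columns, $\det Q' = (-1)^n \det Q$. Since $\det Q \neq 0$ (its columns form a basis, because the $W_k$ form a direct sum and $\{e_k, Je_k\}$ is a basis of $W_k$), the sign of $-J$ is $(-1)^n$ times the sign of $J$. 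This gives the same sign for $n$ even and opposite signs for $n$ odd.

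The only potential subtlety is that the sign in \thref{signCheck} must be well-defined, independent of the decomposition and of the choice of the $e_k$. The paper asserts this implicitly in the definition. Our argument does not depend on well-definedness beyond what the definition already assumes, because we evaluate both signs with one common admissible choice of data. If well-definedness were in doubt, the natural fix would be a connectedness argument: the space of admissible choices is path-connected, and $\det Q$ never vanishes on it. That is the step I would regard as the main obstacle, but it lies outside the scope of this proposition.
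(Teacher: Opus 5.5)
Your proof is correct and is essentially the paper's argument: replacing $J$ by $-J$ in the matrix of \thref{signCheck} negates exactly $n$ columns, so $\det Q$ is multiplied by $(-1)^n$. Your extra checks are things the paper leaves implicit: that the same decomposition and vectors $e_k$ remain admissible for $-J$, and that $\det Q \neq 0$.
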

\begin{proof}
    When $J$ is replaced by $-J$, the matrix $Q$ from \thref{signCheck} has $n$ of its columns multiplied by $-1$. This changes the sign of $\det(Q)$ if and only if $n$ is odd.
\end{proof}

\begin{lemma}\thlabel{basisChoice}
    Let $J$ and $K$ be orthogonal complex structures on $\R^{2n}$, and let $p \in \R^{2n}$ be a unit vector. Then there exists a pair of orthonormal bases $\mathcal{E} = \{e_1, \cdots, e_{2n}\}$ and $\mathcal{F} = \{f_1, \cdots, f_{2n}\}$ such that
    \begin{enumerate}
        \item $e_1 = f_1 = p$.
        \item $J$ is the standard complex structure in $\mathcal{E}$. 
        \item $K$ is the standard complex structure in $\mathcal{F}$.
        \item The change of basis matrix $Q$ from $\mathcal{E}$ to $\mathcal{F}$ (that is, $f_j = \sum_i Q_{ij} e_i$) is of the form
            \begin{equation}
                Q = \begin{bmatrix}
                    1 & & & & & & \\
                      & c_1 & -s_1 \\
                      & s_1 & c_1 \\
                      & & & \ddots \\
                      & & & & c_{n-1} & -s_{n-1} \\
                      & & & & s_{n-1} & c_{n-1} \\
                      & & & & & & \pm1
                \end{bmatrix},
            \end{equation}
            where 
            \begin{itemize}
                \item $c_i^2 + s_i^2 = 1$ for all $i \in \{1, \cdots, n-1\}$.
                \item The $\pm1$ in the lower right corner is positive when $J$ and $K$ have the same sign, and negative when $J$ and $K$ have opposite signs. 
            \end{itemize}
    \end{enumerate}
\end{lemma}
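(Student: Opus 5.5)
The plan is to build both bases simultaneously by induction, two vectors at a time, so that the change of basis has the block form in item (4) by construction. Set $e_1 = f_1 = p$. Throughout, maintain the following hypothesis after stage $k$ (for $1 \le k \le n$):
\begin{itemize}
    \item we have orthonormal vectors $e_1, \dots, e_{2k-1}$ and $f_1, \dots, f_{2k-1}$ spanning the same $(2k-1)$-dimensional subspace $W_k$;
    \item $e_{2j} = Je_{2j-1}$ and $f_{2j} = Kf_{2j-1}$ for $j < k$;
    \item the change of basis restricted to $W_k$ is the upper-left $(2k-1)\times(2k-1)$ corner of the matrix in item (4).
\end{itemize}
In particular $U = \lspan(e_1, \dots, e_{2k-2})$ is $J$-invariant and $U' = \lspan(f_1, \dots, f_{2k-2})$ is $K$-invariant, and both have codimension one in $W_k$. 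The case $k=1$ is trivial.

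For the inductive step, define $e_{2k} = Je_{2k-1}$ and $f_{2k} = Kf_{2k-1}$. The key observation is that $e_{2k}$ is a unit vector in $W_k^\perp$.
\begin{itemize}
    \item Since $J$ is orthogonal and preserves $U$, it preserves $U^\perp$. As $e_{2k-1} \in U^\perp$, we get $e_{2k} \in U^\perp$.
    \item Since $J$ is skew-symmetric, $\langle Jv, v\rangle = 0$, so $e_{2k} \perp e_{2k-1}$.
\end{itemize}
The same argument with $K$ and $U'$ shows $f_{2k}$ is a unit vector in $W_k^\perp$. Now $\dim W_k^\perp = 2n-2k+1$.

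If $k < n$, this dimension is at least $3$, so we can pick a $2$-plane $P \subseteq W_k^\perp$ containing both $e_{2k}$ and $f_{2k}$. When they are linearly independent, $P$ is their span; when $f_{2k} = \pm e_{2k}$, take any plane through $e_{2k}$. Let $e_{2k+1}$ be a unit vector in $P$ orthogonal to $e_{2k}$. Then let $f_{2k+1}$ be the image of $e_{2k+1}$ under the rotation of $P$ carrying $e_{2k}$ to $f_{2k}$. This gives $f_{2k} = c_k e_{2k} + s_k e_{2k+1}$ and $f_{2k+1} = -s_k e_{2k} + c_k e_{2k+1}$ with $c_k^2 + s_k^2 = 1$, which is exactly the next rotation block. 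Setting $W_{k+1} = W_k \oplus P$ restores the hypothesis.

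If $k = n$, then $W_n^\perp$ is a line, so $f_{2n} = \pm e_{2n}$, which produces the lone $\pm 1$ in the corner. At the end, $J$ is the standard complex structure in $\mathcal{E}$ because $Je_{2j-1} = e_{2j}$, which forces $Je_{2j} = J^2e_{2j-1} = -e_{2j-1}$. The same holds for $K$ in $\mathcal{F}$. So items (1)–(4) hold apart from the sign claim.

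For the sign, use \thref{signCheck} with the decompositions $W_j = \lspan(e_{2j-1}, e_{2j})$ for $J$ and $\lspan(f_{2j-1}, f_{2j})$ for $K$.
\begin{itemize}
    \item The sign of $J$ is the sign of $\det[e_1 \cdots e_{2n}]$.
    \item The sign of $K$ is the sign of $\det[f_1 \cdots f_{2n}] = \det[e_1 \cdots e_{2n}] \cdot \det Q$.
\end{itemize}
Since each rotation block has determinant $1$, $\det Q$ equals the corner entry $\pm 1$. Hence that entry is $+1$ exactly when $J$ and $K$ have the same sign.

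The main obstacle is verifying that $Je_{2k-1}$ lands in $W_k^\perp$ rather than merely in $U^\perp$. This is where both orthogonality and skew-symmetry from \thref{complexStructureEquiv} are needed. A second point to handle carefully is the degenerate case $f_{2k} = \pm e_{2k}$, where a free choice of plane is required; there the rotation block has $s_k = 0$ and $c_k = \pm 1$.
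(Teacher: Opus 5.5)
Your proposal is correct and follows the paper's proof essentially step for step. It uses the same simultaneous induction: $e_{2k}=Je_{2k-1}$ and $f_{2k}=Kf_{2k-1}$ are shown to lie in $W^\perp$, a rotation block is built in a plane containing both (with a free choice in the degenerate case $f_{2k}=\pm e_{2k}$), and $f_{2n}=\pm e_{2n}$ is forced at the last step. The only differences are cosmetic: you allow $s_k$ of either sign rather than fixing $\theta\in[0,\pi]$, and you justify the sign claim explicitly via $\det[f_1\cdots f_{2n}]=\det[e_1\cdots e_{2n}]\det Q$, which the paper only asserts.
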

\begin{proof}
    We build $\mathcal{E}$ and $\mathcal{F}$ inductively. First, let $e_1 = f_1 = p$. Now, suppose that we have already chosen $\{e_1, \cdots, e_{2k-1}\}$ and $\{f_1, \cdots, f_{2k-1}\}$ to be orthonormal sets of vectors such that 
    \begin{enumerate}[label=\alph*)]
        \item $\{e_1, \cdots, e_{2k-1}\}$ and $\{f_1, \cdots, f_{2k-1}\}$ span the same subspace, call it $W$.
        \item The change of basis matrix from $\{e_1, \cdots, e_{2k-1}\}$ to $\{f_1, \cdots, f_{2k-1}\}$ looks like the top left $(2k-1) \times (2k-1)$ submatrix of $Q$.
        \item For all $i \in \{1, \cdots, k-1\}$ we have $e_{2i} = J(e_{2i-1})$ and $f_{2i} = K(f_{2i-1})$. 
    \end{enumerate}
    Our goal now is to choose $e_{2k}$, $f_{2k}$, $e_{2k+1}$, $f_{2k+1}$ such that (a), (b), and (c) still hold.
    \begin{center}
        \includegraphics[width = 15 cm]{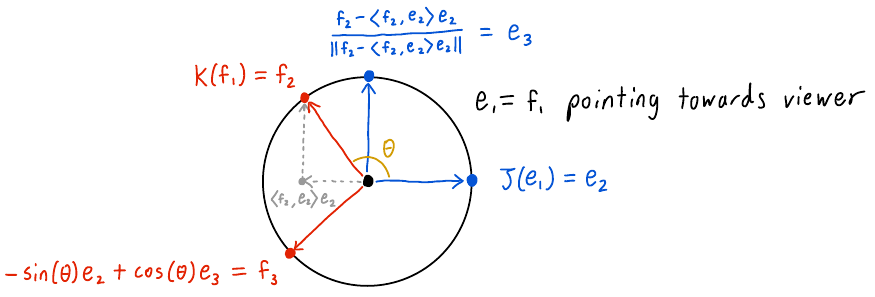}
    \end{center}
    To ensure that (c) still holds, we let $e_{2k} = J(e_{2k-1})$ and $f_{2k} = K(f_{2k-1})$. Since $J$ is skew-symmetric, $e_{2k}$ is orthogonal to $e_{2k-1}$. We have by assumption that $e_{2k-1}$ is orthogonal to $\lspan\{ e_1, \cdots, e_{2k-2} \}$, which by (c) is a $J$-invariant subspace. Since $J$ is orthogonal, this implies that $J(e_{2k-1}) = e_{2k}$ is a unit vector orthogonal to $J(\lspan\{ e_1, \cdots, e_{2k-2}\}) = \lspan\{ e_1, \cdots, e_{2k-2} \}$. We conclude that $e_{2k}$ is orthogonal to $W$. By the same argument we have that $f_{2k}$ is orthogonal to $W$. 
    
    At this point, if $k = n$, then the inductive process terminates. Since we chose $e_{2n}$ and $f_{2n}$ to be unit vectors orthogonal to $W$, which has codimension 1, it follows that $f_{2n} = \pm e_{2n}$. The sign is determined by the sign of the determinant of $Q$, which must be positive when $J$ and $K$ have the same sign, and negative when they have opposite signs.
    
    So from now on assume $k < n$. We split into cases:
    \begin{itemize}
        \item Case 1: If $e_{2k}$ and $f_{2k}$ are linearly independent, choose $e_{2k+1}$ to be the component of $f_{2k}$ perpendicular to $e_{2k}$, rescaled to have norm 1. 

        To show that our choice is valid, we must show that the vector $e_{2k+1}$ is orthogonal to $\lspan\{e_1, \cdots, e_{2k}\}$:
        \begin{itemize}
            \item It's orthogonal to $e_{2k}$ by construction. 
            \item It's orthogonal to $W = \lspan\{e_1, \cdots, e_{2k-1}\}$ since it's a linear combination of $e_{2k}$ and $f_{2k}$, both of which are orthogonal to $W$.
        \end{itemize}
        \item Case 2: If $e_{2k}$ and $f_{2k}$ are linearly dependent, choose $e_{2k+1}$ to be any unit vector orthogonal to $\lspan\{e_1, \cdots, e_{2k}\}$.
    \end{itemize}
    In either of the above cases, we have $f_{2k} = \cos(\theta) e_{2k} + \sin(\theta)e_{2k+1}$ for some $\theta \in [0, \pi]$. Letting $c_k = \cos(\theta)$ and $s_k = \sin(\theta)$, we get 
    \begin{equation}\label{eq1}
        f_{2k} = c_k e_{2k} + s_ke_{2k+1}.
    \end{equation}
    Finally, let 
        \begin{equation}\label{eq2}
            f_{2k+1} = -s_k e_{2k} + c_k e_{2k+1}.
        \end{equation}
        To show that this choice is valid, we must show that $f_{2k+1}$ is orthogonal to $\lspan\{f_1, \cdots, f_{2k}\}$:
    \begin{itemize}
        \item[]
        \begin{itemize}
            \item It's orthogonal to $f_{2k}$ since $\langle f_{2k}, f_{2k+1}\rangle = -c_k s_k + c_k s_k = 0$.
            \item It's orthogonal to $W = \lspan\{f_1, \cdots, f_{2k-1}\}$ since it's a linear combination of $e_{2k}$ and $e_{2k+1}$, both of which are orthogonal to $W$. 
        \end{itemize}
    \end{itemize}
    The relations \eqref{eq1} and \eqref{eq2} tell us that conditions (a) and (b) still hold for $\{e_1, \cdots, e_{2k+1}\}$ and $\{f_1, \cdots, f_{2k+1}\}$, completing the proof.
\end{proof}

\begin{lemma}\thlabel{base_case}
    Let $J$ and $K$ be orthogonal complex structures of opposite sign on $\R^4$. Then $J + K$ has nontrivial kernel.
\end{lemma}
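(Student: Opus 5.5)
Since $n=2$ is even, \thref{signOfNegative} says $-K$ has the same sign as $K$, hence the opposite sign from $J$. Also $-K$ is an orthogonal complex structure, since $(-K)^2 = K^2 = -\id$ and $-K$ is orthogonal. So the claim is that two orthogonal complex structures $J$ and $J' = -K$ of opposite sign on $\R^4$ satisfy $\ker(J - J') \neq 0$. By \thref{correspondence} and \thref{win_condition}, this is exactly the statement that the corresponding Hopf fibrations of opposite sign share an oriented fiber. That is \thref{opp_sign_share}, which follows from \thref{hopfs_are_slices}. This gives a short proof. I would nevertheless also give a direct linear-algebra proof using \thref{basisChoice}, because that is the template the induction in \thref{oppositeCircles} will presumably follow.

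For the direct proof, fix any unit vector $p$ and apply \thref{basisChoice} to $J$ and $K$. This gives orthonormal bases $\mathcal{E}$ and $\mathcal{F}$ with $e_1 = f_1 = p$, in which $J$ and $K$ are standard. Since $J$ and $K$ have opposite sign, the change of basis matrix is $Q = \operatorname{diag}(1, R_\theta, -1)$, where $R_\theta$ is the $2\times 2$ rotation with entries $c, s$. Write $J_0$ for the standard complex structure. In $\mathcal{E}$-coordinates, $J$ is $J_0$ and $K$ is $QJ_0Q^{-1} = QJ_0Q^T$. I would compute $J_0 + QJ_0Q^T$ explicitly; it is a $4\times4$ skew-symmetric matrix in $c$ and $s$. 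A skew-symmetric $4\times 4$ matrix $A$ has $\det A = \operatorname{Pf}(A)^2$. It therefore suffices to check that the Pfaffian $a_{12}a_{34} - a_{13}a_{24} + a_{14}a_{23}$ vanishes identically, using $c^2 + s^2 = 1$.

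Concretely, $K$ acts by $f_1 \mapsto f_2$ and $f_3 \mapsto f_4$. Here $f_2 = c e_2 + s e_3$, $f_3 = -s e_2 + c e_3$, and $f_4 = -e_4$. Substituting gives $K e_1 = c e_2 + s e_3$, and $K(-s e_2 + c e_3) = -e_4$, among the other entries. The main obstacle is bookkeeping: the sign of the $-1$ entry must make the Pfaffian terms cancel, which is where the opposite-sign hypothesis enters. For same-sign structures the Pfaffian would instead be $2 + 2c$ or similar, which is nonzero in general.

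If the computation goes wrong, I would fall back on a cleaner route. Look for a kernel vector of the form $x = \alpha e_1 + \beta e_4$, or more generally in $\lspan\{e_1, e_2, e_3, e_4\}$, by solving $Jx = -Kx$ directly; the $2\times2$ rotation structure should make one such vector apparent. In either case, the first argument via \thref{win_condition} and \thref{opp_sign_share} already settles the lemma.
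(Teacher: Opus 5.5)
Your proposal is correct. It actually contains two proofs, one of which is the paper's.

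The direct computation is the paper's proof: the same application of \thref{basisChoice}, the same $Q=\operatorname{diag}(1,R_\theta,-1)$, and the same reduction to a vanishing determinant. The paper computes $\det(I_0Q+QI_0)$ and reads off $(1-c^2-s^2)^2$ from its block structure. You instead take the Pfaffian of the skew-symmetric matrix
\[
J_0+QJ_0Q^T=\begin{bmatrix} 0 & -1-c & -s & 0\\ 1+c & 0 & 0 & -s\\ s & 0 & 0 & c-1\\ 0 & s & 1-c & 0\end{bmatrix},
\qquad
\operatorname{Pf}=(-1-c)(c-1)-(-s)(-s)=1-c^2-s^2=0 .
\]
Your same-sign value $2+2c$ is exactly right. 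Your fallback also works: $s\,e_1+(1+c)e_4$ and $(1-c)e_1+s\,e_4$ both lie in the kernel, and they cannot both vanish.

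Your first argument is the genuinely different route. You pass from $K$ to $-K$ via \thref{signOfNegative} (legitimate because $n=2$), then use \thref{correspondence}, \thref{win_condition} and \thref{opp_sign_share}. This is valid and not circular, since \thref{hopfs_are_slices} is proved by direct computation in Section~\ref{gw_argument}. The paper acknowledges this reading (``essentially a restatement of \thref{opp_sign_share}'') but deliberately does not argue this way.

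What the geometric route buys is brevity and a conceptual reason for the lemma. What it costs is two dependencies:
\begin{enumerate}
\item It relies on the self-dual/anti-self-dual splitting of $\Lambda^2\R^4$, which exists only in dimension 4.
\item It relies on the match between the $\det T$ sign of a Hopf fibration and the sign of $I_H$ from \thref{signCheck}, which the paper only asserts. This is a one-line check: $I_{T(H_0)}=TI_0T^{-1}$, and the matrix of \thref{signCheck} for $TI_0T^{-1}$ can be taken to be $T$ itself.
\end{enumerate}
The algebraic proof is self-contained linear algebra. It is built on the same normal form that drives the inductive step of \thref{oppositeCircles}, which fits the paper's aim of freeing the argument from four-dimensional geometry.
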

\begin{proof}
    This is essentially a restatement of \thref{opp_sign_share} in the language of orthogonal complex structures. Here we give a purely algebraic proof:

    Let $I_0$ denote the standard complex structure. By \thref{basisChoice}, we can choose bases $\mathcal{E} = \{e_1, e_2, e_3, e_4\}$ and $\mathcal{F} = \{f_1, f_2, f_3, f_4\}$ such that $J = I_0$ in $\mathcal{E}$, $K = I_0$ in $\mathcal{F}$, and the change of basis matrix from $\mathcal{E}$ to $\mathcal{F}$ is of the form
    \begin{equation}
        Q = \begin{bmatrix}
            1 & & & \\
            & c & -s & \\
            & s & c & \\
            & & & -1
        \end{bmatrix},
    \end{equation}
    where $c^2 + s^2 = 1$.

    Expressing everything in terms of the basis $\mathcal{E}$, we have $J = I_0$ and $K = Q I_0 Q^{-1}$. We observe that
    \begin{align}
        \ker(J + K) \neq \{0\} &\iff \ker(I_0 + QI_0Q^{-1}) \neq \{0\} \\
        &\iff \det(I_0+QI_0Q^{-1}) = 0 \\
        &\iff \det(I_0Q + QI_0) = 0,
    \end{align}
    so our goal is now to prove $\det(I_0Q + QI_0) = 0$. We compute that 
    \begin{equation}
        I_0Q = \begin{bmatrix}
            0 & -1 & & \\
            1 & 0 & & \\
            & & 0 & -1 \\
            & & 1 & 0
        \end{bmatrix}
        \begin{bmatrix}
            1 & & & \\
            & c & -s & \\
            & s & c & \\
            & & & -1
        \end{bmatrix} = \begin{bmatrix}
            & -c & s & \\
            1 & & & \\
            & & & 1 \\
            & s & c &
        \end{bmatrix}
    \end{equation}
    and 
    \begin{equation}
        QI_0 = 
        \begin{bmatrix}
            1 & & & \\
            & c & -s & \\
            & s & c & \\
            & & & -1
        \end{bmatrix} 
        \begin{bmatrix}
            0 & -1 & & \\
            1 & 0 & & \\
            & & 0 & -1 \\
            & & 1 & 0
        \end{bmatrix} = \begin{bmatrix}
            & -1 & & \\
            c & & & s \\
            s & & & -c \\
            & & -1 &
        \end{bmatrix}, 
    \end{equation}
    meaning 
    \begin{equation}
        I_0Q + QI_0 = \begin{bmatrix}
            & -1-c & s & \\
            1+c & & & s \\
            s & & & 1-c \\
            & s & -1+c &
        \end{bmatrix}.
    \end{equation}
    It follows that 
    \begin{align}
        \det(I_0Q + QI_0) &= \det\begin{bmatrix}
            1+c & s \\
            s & 1-c
        \end{bmatrix} \cdot \det \begin{bmatrix}
            -1-c & s \\
            s & -1+c
        \end{bmatrix} \\
        &= (1-c^2 - s^2)(1-c^2-s^2) \\
        &= (1-1)(1-1) \\
        &= 0,
    \end{align}
    and we are done.
\end{proof}

\begin{theorem}\thlabel{oppositeCircles}
    Let $J$ and $K$ be orthogonal complex structures of opposite sign on $\R^{2n}$. Then $J + K$ has nontrivial kernel.
\end{theorem}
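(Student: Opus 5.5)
We will reduce the statement to a determinant computation in the adapted bases of \thref{basisChoice}, as in the proof of \thref{base_case}. The difference is that we exhibit the kernel vector directly instead of factoring the determinant. Let $I_0$ be the standard complex structure. Fix any unit vector $p$ and take the bases $\mathcal{E}$ and $\mathcal{F}$ from \thref{basisChoice}. In the basis $\mathcal{E}$ we then have $J = I_0$ and $K = QI_0Q^{-1}$, where $Q$ is the block matrix of that lemma. Because $J$ and $K$ have opposite sign, the lower-right entry of $Q$ is $-1$. As in \thref{base_case}, $\ker(J+K)\neq\{0\}$ if and only if $A := I_0Q + QI_0$ is singular, so it suffices to find a nonzero vector $x$ with $Ax = 0$.

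The key structural observation concerns which coordinates are coupled. $I_0$ only couples the coordinate pairs $(1,2),(3,4),\dots,(2n-1,2n)$. $Q$ fixes the first and last coordinates and only couples $(2,3),(4,5),\dots,(2n-2,2n-1)$. Hence both $I_0Q$ and $QI_0$ only connect indices that are at most two apart along the path $1 - 2 - 3 - \cdots - 2n$, and $A$ has a banded, essentially chain-like sparsity pattern. The $4\times 4$ matrix computed in \thref{base_case} illustrates this.

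The plan is to solve $Ax=0$ coordinate by coordinate along this chain. First, normalize the initial coordinate(s) of $x$, for example $x_1 = 1$. Next, use the rows of $A$ in order; each new row should involve exactly one coordinate not yet determined, with a coefficient of the form $1\pm c_k$ or $s_k$. These rows recursively determine the remaining coordinates, up to the single free scaling we fixed. The identities $c_k^2+s_k^2=1$ will be used at each step; they are exactly what made the $2\times 2$ determinants vanish in \thref{base_case}. Degenerate cases where a pivot such as $1+c_k$ or $s_k$ vanishes will be treated separately: if $s_k=0$ and $c_k=\pm1$, the chain decouples at that block and the problem splits into smaller instances. After all free coordinates are determined, one row, coming from the last coordinate, is left over as a consistency condition. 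I expect that this leftover equation reads $(\text{entry}) \cdot x_{2n} = \pm(\text{entry}')\cdot x_{2n-1}$ and holds precisely when the corner entry of $Q$ is $-1$, i.e.\ when $J$ and $K$ have opposite sign.

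The main obstacle is this final consistency step: tracking signs through the recursion and checking that the corner entry $-1$, and not $+1$, makes the last row automatic. The case $n=2$ from \thref{base_case} serves as a check of the sign bookkeeping. If the recursion becomes messy, the fallback is to prove $\det A = 0$ by induction on $n$ using a continuant-type expansion along the last rows.

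A second fallback is a Pfaffian argument. $J+K$ is skew-symmetric, and $\mathrm{Pf}(J)$ and $\mathrm{Pf}(K)$ are $\pm 1$ with opposite signs, since each equals $\det(T)\,\mathrm{Pf}(I_0)$ for the conjugating orthogonal $T$ of \thref{conjugate_to_standard}. One would then try to exploit the symmetry between $J$ and $K$ to show that $\mathrm{Pf}(J+K)=0$.
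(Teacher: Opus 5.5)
\textbf{Verdict.} Your reduction is sound, but the decisive step is only an expectation, and the mechanism you predict for it is not what the matrix actually does. For any unit $p$, \thref{basisChoice} gives $J+K=(I_0Q+QI_0)Q^{-1}$ in the basis $\mathcal{E}$, with corner entry $-1$. So the theorem is equivalent to singularity of $A=I_0Q+QI_0$ for every such $Q$, and that is exactly what you never establish.

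\textbf{Where the proposed mechanism fails.} Set $c_0=1$, $c_n=-1$ (the corner) and $s_0=s_n=0$. For $1\le k\le n$ a direct computation gives the following, where terms carrying $s_0$ or $s_n$ are absent:
\begin{align}
Ae_{2k-1}&=s_{k-1}e_{2k-3}+(c_{k-1}+c_k)e_{2k}+s_ke_{2k+1},\\
Ae_{2k}&=s_{k-1}e_{2k-2}-(c_{k-1}+c_k)e_{2k-1}+s_ke_{2k+2}.
\end{align}
This contradicts your plan in three ways.
\begin{enumerate}
\item The first row of $Ax=0$ is $-(1+c_1)x_2+s_1x_3=0$. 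It does not involve $x_1$ and contains two undetermined coordinates, so "each new row introduces exactly one unknown" fails immediately.
\item The coefficients include $c_{k-1}+c_k$, not just $1\pm c_k$ and $s_k$.
\item There is no single chain along $1-2-\cdots-2n$. $I_0$ interchanges the index classes $S=\{j: j\equiv 0,1 \bmod 4\}$ and $T=\{j: j\equiv 2,3 \bmod 4\}$, while $Q$ preserves them. Hence $A$ maps $\R^S$ to $\R^T$ and $\R^T$ to $\R^S$, and $Ax=0$ splits into two decoupled $n\times n$ systems.
\end{enumerate}
Without this structure, and without an actual check that the final equation is automatic, the key step is a conjecture rather than an argument.

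\textbf{How to complete the determinant route.} Order $S$ as $1,4,5,8,9,\dots$ and $T$ as $2,3,6,7,\dots$.
\begin{itemize}
\item The block $\R^S\to\R^T$ is symmetric tridiagonal, with diagonal $(-1)^{k+1}(c_{k-1}+c_k)$ and off-diagonal entries $s_k$.
\item After changing the signs of some rows and columns, it has diagonal $c_{k-1}+c_k$, superdiagonal $-s_k$ and subdiagonal $s_k$.
\item Its leading principal minors satisfy $D_0=1$, $D_1=1+c_1$ and $D_k=(c_{k-1}+c_k)D_{k-1}+s_{k-1}^2D_{k-2}$.
\item The identity $(c_{k-1}+c_k)(1+c_{k-1})+s_{k-1}^2=(1+c_{k-1})(1+c_k)$ then gives $D_k=\prod_{j=1}^{k}(1+c_j)$.
\item At $k=n$ the factor $1+c_n$ vanishes precisely because the corner is $-1$ (with corner $+1$ it would be $2$). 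No case split on $s_k=0$ is needed.
\end{itemize}
Completed this way, you get a genuinely different, non-inductive proof. The paper instead studies critical points of $\varphi(v)=\langle Jv,Kv\rangle$, splits off a common $4$-dimensional invariant subspace, and inducts.

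\textbf{The Pfaffian route.} Your Pfaffian fallback is missing one key idea. Since $JK$ is orthogonal, $J+tK=J(\id-tJK)$ is invertible for $0\le t<1$. If $J+K$ were also invertible, $\mathrm{Pf}(J+tK)$ would not vanish on $[0,1]$, so $\mathrm{Pf}(J+K)$ would have the sign of $\mathrm{Pf}(J)$. By the symmetric argument it would also have the sign of $\mathrm{Pf}(K)$. This contradicts the opposite signs you correctly derived.
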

\begin{proof}
    We proceed by induction on $n$. The only orthogonal complex structures on $\R^2$ are $\begin{bmatrix}
        0 & -1 \\ 1 & 0
    \end{bmatrix}$, which is positive, and $\begin{bmatrix}
        0 & 1 \\ -1 & 0
    \end{bmatrix}$, which is negative. The sum of these two matrices has nontrivial kernel, and so the claim holds for $n=1$. By \thref{base_case}, the claim holds for $n = 2$.
    
    Now assume $n > 2$. Let the function $\varphi: S^{2n-1} \to [-1,1]$ be given by $\varphi(v) = \langle Jv, Kv\rangle$. If we can show that $-1 \in \operatorname{im}(\varphi)$ then we're done, since 
    \begin{align}
        \varphi(v) = -1 &\implies \langle Jv, Kv\rangle = -1 \\
        &\implies Jv = -Kv \\
        &\implies (J+K)(v) = 0.
    \end{align}
    
    \underline{Case 1:} Suppose we are lucky enough to find that $\varphi$ has no critical values in the interval $(-1,1)$. Since $S^{2n-1}$ is connected and compact, $\operatorname{im}(\varphi)$ must equal some closed interval $[a,b]$ with $-1 \leq a \leq b \leq 1$. Since $a = \min(\varphi)$ and $b = \max(\varphi)$ are critical values of $\varphi$, by our assumption we have $a,b \notin (-1,1)$. So the only possibilities for $\im(\varphi)$ are $[-1, 1]$, $\{-1\}$, or $\{1\}$. But $\operatorname{im}(\varphi)$ cannot equal $\{1\}$ or else $J = K$, contradicting the fact that they have opposite signs. Therefore, $-1 \in \operatorname{im}(\varphi)$, and we would be done.

    \underline{Case 2:} Suppose that $\varphi$ has some critical point $p \in S^{2n-1}$ with $\varphi(p) \in (-1,1)$. The fact that $p$ is a critical point tells us that for any vector $v$ orthogonal to $p$ we have
    \begin{equation}
        \frac{d}{dt}\Big|_{t = 0} \varphi(p + tv) = 0.
    \end{equation}
    After expanding, the left side reduces to $\langle Jp, Kv\rangle + \langle Jv, Kp\rangle$. So 
    \begin{align}
        0 &= \langle Jp, Kv\rangle + \langle Jv, Kp\rangle \\
        &= \langle KJp, K^2v\rangle + \langle J^2v, JKp\rangle \qquad \text{since $J$ and $K$ are orthogonal} \\
        &= \langle KJp, -v\rangle + \langle -v, JKp\rangle \\
        &= -\langle v, JKp + KJp\rangle.
    \end{align}
    Letting $w = JKp + KJp$, we have just shown that $v \perp p \implies v\perp w$ for all $v \in \R^{2n}$. Therefore, $w \in \lspan(p)$.
    Now we apply \thref{basisChoice} to $J, K, p$ to get $\mathcal{E}$, $\mathcal{F}$, $Q$ satisfying the conclusions of that lemma. We know that $w \in \lspan(p) = \lspan(e_1)$. To make sense of this, we want to express $w$ in terms of the basis $\mathcal{E}$.
    \begin{align}
        w &= JKp + KJp \\
        &= JKf_1 + KJe_1 \\
        &= Jf_2 + Ke_2.
    \end{align}
    Using the change of basis matrix $Q$, we express $J$, $K$, and $f_2$ as matrices with respect to the basis $\mathcal{E}$.
    Letting $I_0$ be the standard complex structure, we have
    \begin{align}
        J &= I_0, \\
        K &= QI_0Q^T, \\
        f_2 &= Q(e_2).
    \end{align}
    Thus, 
    \begin{equation}
        w = I_0 Q(e_2) + QI_0Q^T(e_2),
    \end{equation}
    which may be directly computed to equal $-2c_1 e_1 + s_1(1-c_2) e_4 - s_1s_2e_5$.
    Since $w \in \lspan(e_1)$, we have that $s_1(1-c_2) = s_1s_2 = 0$. 
    
    If $s_1 = 0$, then the fact that $c_1^2 + s_1^2 = 1$ tells us that $c_1 = \pm1$, meaning $e_2 = \pm f_2$, contradicting the fact that $\varphi(p) = \langle e_2, f_2\rangle \in (-1,1)$. So we may assume that $s_1 \neq 0$. Then we must have $1 - c_2 = s_2 = 0$, and it follows that $Q(e_4) = e_4$. Therefore, $\lspan\{e_1, \cdots, e_4\}$ is a $Q$-invariant subspace. But $Q(e_i) = f_i$ for all $i$, so this means 
    \begin{equation}
        \lspan\{e_1, e_2, e_3, e_4\} = \lspan\{Qe_1, Qe_2, Qe_3, Qe_4 \} = \lspan\{f_1, f_2, f_3, f_4\}.
    \end{equation}
    Call the above subspace $W$.
    We find that $W$ is $J$-invariant (since $J$ fixes $\lspan\{e_1, \cdots, e_4\}$) and $K$-invariant (since $K$ fixes $\lspan\{f_1, \cdots, f_4\}$). 
    The map $Q|_W$ has matrix representation equal to the top left $4 \times 4$ submatrix of $Q$, and this submatrix has positive determinant due to the fact that $c_2 = 1$. Since $\det(Q|_W)$ is positive and $\det(Q)$ is negative, we must have that $\det(Q|_{W^\perp})$ is negative. Thus, $J|_{W^\perp}$ and $K|_{W^\perp}$ are orthogonal complex structures on $W^\perp \cong \R^{2n - 4}$ of opposite sign. Applying our inductive hypothesis, we find that their sum $(J + K)|_{W^\perp}$ has nontrivial kernel. Thus, $J+K$ has nontrivial kernel, and we are done.
\end{proof}

\begin{corollary}\thlabel{main_result_ocs}
    Let $J$ and $K$ be orthogonal complex structures on $\R^{2n}$, and suppose one of the following holds: 
    \begin{enumerate}
        \item $J$ and $K$ have opposite sign and $n$ is even.
        \item $J$ and $K$ have the same sign and $n$ is odd.
    \end{enumerate}
    Then $\ker(J-K)$ is nontrivial.
\end{corollary}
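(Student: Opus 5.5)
The plan is to reduce the corollary to \thref{oppositeCircles} by replacing $K$ with $-K$. Since $J - K = J + (-K)$, it suffices to check that $-K$ is an orthogonal complex structure of sign opposite to $J$ in each of the two cases.

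First, $-K$ is an orthogonal complex structure. It is orthogonal because $(-K)(-K)^T = KK^T = \id$. It is a complex structure because $(-K)^2 = K^2 = -\id$. Alternatively, it is skew-symmetric and orthogonal, so \thref{complexStructureEquiv} applies.

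Next, compare signs using \thref{signOfNegative}.
\begin{itemize}
\item In case (1), $n$ is even, so $-K$ has the same sign as $K$. That sign is opposite to the sign of $J$.
\item In case (2), $n$ is odd, so $-K$ has the opposite sign to $K$. Since $K$ has the same sign as $J$, the sign of $-K$ is opposite to that of $J$.
\end{itemize}

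In both cases $J$ and $-K$ are orthogonal complex structures of opposite sign on $\R^{2n}$. Applying \thref{oppositeCircles} to the pair $(J, -K)$ shows that $J + (-K) = J - K$ has nontrivial kernel, which is the claim.

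There is no real obstacle; the only point needing care is that \thref{signOfNegative} applies to $-K$. That proposition is stated for arbitrary complex structures, and the $J$-invariant decomposition for $K$ also serves $-K$, so it applies directly.
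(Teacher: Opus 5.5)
Your proposal is correct and follows essentially the same route as the paper: use \thref{signOfNegative} to see that $-K$ has sign opposite to $J$ in both cases, then apply \thref{oppositeCircles} to the pair $(J,-K)$. Your explicit check that $-K$ is again an orthogonal complex structure is a detail the paper leaves implicit.
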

\begin{proof}
    By \thref{signOfNegative}, $-K$ has the same sign as $K$ when $n$ is even, and $-K$ has sign opposite that of $K$ when $n$ is odd. The claim follows from applying \thref{oppositeCircles} to $J$, $-K$.
\end{proof}
In light of \thref{win_condition}, the above corollary is equivalent to \thref{main_result}, and so we have proven our main result. We now use \thref{oppositeCircles} to prove \thref{main_result_corollary}, which is slightly stronger but whose interpretation in terms of Hopf fibrations is less straightforward.

\begin{lemma}\thlabel{kernelInvariant}
    Let $J$ and $K$ be complex structures on $\R^{2n}$. Then the subspace $\ker(J+K)$ is $J$-invariant. Additionally, any $J$-invariant subspace of $\ker(J+K)$ is also $K$-invariant.
\end{lemma}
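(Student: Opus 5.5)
The plan is a direct algebraic verification using only $J^2 = K^2 = -\id$; neither orthogonality nor sign plays any role.

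\emph{First claim.} Take $v \in \ker(J+K)$, so that $Kv = -Jv$. We must show $(J+K)(Jv) = 0$. Compute
\begin{equation}
(J+K)(Jv) = J^2 v + KJv = -v + KJv.
\end{equation}
Since $Jv = -Kv$, we get $KJv = K(-Kv) = -K^2 v = v$. Hence $(J+K)(Jv) = -v + v = 0$, so $Jv \in \ker(J+K)$.

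\emph{Second claim.} Let $U \subseteq \ker(J+K)$ be $J$-invariant and take $u \in U$. Because $u \in \ker(J+K)$, we have $Ku = -Ju$. Since $U$ is $J$-invariant, $Ju \in U$, and since $U$ is a linear subspace, $-Ju \in U$. Therefore $Ku \in U$, and $U$ is $K$-invariant.

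There is no real obstacle here. The only step needing care is the identity $KJv = v$ in the first claim, which follows from the kernel relation together with $K^2 = -\id$.
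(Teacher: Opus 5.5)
Your proof is correct and essentially the same as the paper's. For the first claim the paper applies $K$ to $Jv + Kv = 0$ and replaces $K^2$ by $J^2$ to obtain $(J+K)(Jv) = 0$, which is your computation rearranged. For the second claim the paper uses $K|_W = -J|_W$, exactly as you do.
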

\begin{proof}
    We have that 
    \begin{align}
        v \in \ker(J+K) &\implies Jv + Kv = 0 \\
        &\implies KJv + K^2v = 0 \\
        &\implies KJv + J^2v = 0 \qquad \text{since $K^2 = -\id = J^2$} \\
        &\implies Jv \in \ker(J+K).
    \end{align}
    Therefore, $\ker(J+K)$ is $J$-invariant. Now let $W$ be a $J$-invariant subspace of $\ker(J+K)$. 
    \begin{align}
        W \subseteq \ker(J+K) &\implies (J+K)|_W = 0 \\ &\implies K|_W = -J|_W.
    \end{align}
    Therefore, $K(W) = -J(W) = W$, meaning $W$ is also $K$-invariant.
\end{proof}

\begin{corollary}\thlabel{main_result_corollary}
    Let $J$ and $K$ be orthogonal complex structures on $\R^{2n}$. If $J$ and $K$ have opposite sign, then $\dim(\ker(J + K)) \equiv 2 \mod 4$. If $J$ and $K$ have the same sign, then $\dim(\ker(J+K)) \equiv 0 \mod 4$.
\end{corollary}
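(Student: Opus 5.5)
Let $V = \ker(J+K)$ and $V^\perp$ its orthogonal complement. By \thref{kernelInvariant}, $V$ is $J$-invariant, and then (taking $W = V$) also $K$-invariant, with $K|_V = -J|_V$. Since $J$ and $K$ are orthogonal, $V^\perp$ is invariant under both as well. So $J|_V$, $K|_V$, $J|_{V^\perp}$, $K|_{V^\perp}$ are orthogonal complex structures. In particular $\dim V = 2m$ and $\dim V^\perp = 2(n-m)$ for some $m$.

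The restrictions to $V^\perp$ carry the key fact. We have $(J+K)|_{V^\perp}$ injective, because any kernel vector would lie in $V \cap V^\perp = 0$. By the contrapositive of \thref{oppositeCircles}, applied on $V^\perp \cong \R^{2(n-m)}$, $J|_{V^\perp}$ and $K|_{V^\perp}$ have the same sign. The degenerate case $V^\perp = 0$ needs no comment. On $V$, $K|_V = -J|_V$, so by \thref{signOfNegative} their signs agree iff $m$ is even.

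Next comes multiplicativity of sign. I choose the decomposition in \thref{signCheck} adapted to $\R^{2n} = V \oplus V^\perp$, which I can do using \thref{orthogonalDecomposition} separately on each piece. Taking bases of the form $e_1, Je_1, \dots$, the matrix $Q$ is block-structured, so $\operatorname{sign}(J) = \operatorname{sign}(J|_V)\operatorname{sign}(J|_{V^\perp})$ with respect to the orientations of $V$ and $V^\perp$. Here I fix an arbitrary orientation on $V$, and orient $V^\perp$ compatibly so that the direct sum orientation is the standard one. The same holds for $K$, using the same orientations on $V$ and $V^\perp$. The main subtlety is that sign is only defined relative to an orientation, and the well-definedness of sign in \thref{signCheck} is asserted rather than proved. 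I would justify it by noting that the $Q$-determinant sign is constant on the connected set of admissible choices, or equivalently by relating it to $\operatorname{sign}$ of the Pfaffian of $-J$. Since both $J$ and $K$ are measured against the same orientations, the product formula is consistent.

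Combining these, $\operatorname{sign}(J)\operatorname{sign}(K) = \operatorname{sign}(J|_V)\operatorname{sign}(K|_V)\cdot \operatorname{sign}(J|_{V^\perp})\operatorname{sign}(K|_{V^\perp}) = (-1)^m \cdot 1$. So $J$ and $K$ have opposite sign iff $m$ is odd, i.e.\ $\dim V = 2m \equiv 2 \pmod 4$, and the same sign iff $m$ is even, i.e.\ $\dim V \equiv 0 \pmod 4$.

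The step I expect to be the main obstacle is the multiplicativity of sign under orthogonal invariant splittings together with the orientation bookkeeping. The rest follows directly from the cited results.
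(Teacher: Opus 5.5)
Your proof is correct, but it is organized differently from the paper's. The paper inducts on $n$. It finds a nonzero $v\in\ker(J+K)$: via \thref{oppositeCircles} in the opposite-sign case, and by assuming the kernel is nonzero in the same-sign case. It then peels off the single 2-plane $W=\lspan\{v,Jv\}$, on which $K|_W=-J|_W$ have opposite signs, so the relative sign flips on $W^\perp$. Finally it applies the inductive hypothesis on $W^\perp$ and adds $2$ back to the kernel dimension.

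You instead split off all of $V=\ker(J+K)$ at once. Since $J+K$ is injective on $V^\perp$, the contrapositive of \thref{oppositeCircles} forces $J|_{V^\perp}$ and $K|_{V^\perp}$ to have the same sign. Meanwhile \thref{signOfNegative} applied to $K|_V=-J|_V$ gives relative sign $(-1)^m$ on $V$. This removes the induction and the two-case bookkeeping, and yields the closed formula $\operatorname{sign}(J)\operatorname{sign}(K)=(-1)^{\dim\ker(J+K)/2}$ in one step. It also isolates the structural fact that $J$ and $K$ always agree in sign on $\ker(J+K)^\perp$.

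Both proofs rest on the same ingredient: multiplicativity of the relative sign across an orthogonal splitting invariant under both $J$ and $K$. The paper uses this without comment (``Therefore, $J|_{W^\perp}$ and $K|_{W^\perp}$ have the same sign''). Your handling of it is sound. Measuring $J$ and $K$ against the same orientations of $V$ and $V^\perp$ makes the product formula consistent; equivalently, the change of basis between a $J$-adapted and a $K$-adapted basis is block diagonal once both are adapted to $V\oplus V^\perp$. Well-definedness of the sign in \thref{signCheck} does follow as you suggest. One route is connectedness of the set of complex bases of $(\R^{2n},J)$, a $GL_n(\C)$-torsor. The other is the identity $\operatorname{sign}(J)=\operatorname{Pf}(-J)$ for orthogonal $J$.
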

\begin{proof}
    We work by induction on $n$. The claim is easily checked for $n = 1$, so assume $n > 1$. 
    
    First, suppose $J$ and $K$ have opposite sign. By \thref{oppositeCircles}, $\ker(J+K)$ is nontrivial, so let $v \in \ker(J+K)$ be a nonzero vector, and let $W = \lspan\{v, Jv\}$. By \thref{kernelInvariant}, $W$ is a 2-dimensional subspace of $\ker(J+K)$ which is both $J$-invariant and $K$-invariant. Since $K|_W = -J|_W$, by \thref{signOfNegative} we have that $J|_W$ and $K|_W$ have opposite signs. Therefore, $J|_{W^\perp}$ and $K|_{W^\perp}$ have the same sign. By our inductive hypothesis, $\dim(\ker((J+K)|_{W^\perp})) \equiv 0 \mod 4$. 
    Adding back in the contribution from $W$, we get that $\dim(\ker(J+K)) \equiv 2 \mod 4$, as desired.

    Now suppose $J$ and $K$ have the same sign. If $\ker(J+K)$ is trivial then $\dim(\ker(J+K)) = 0$ and we're done, so assume it's nontrivial. As before, we may choose $W\subseteq \ker(J+K)$ to be a 2-dimensional subspace which is both $J$-invariant and $K$-invariant. $J|_W$ and $K|_W$ have opposite signs, so $J|_{W^\perp}$ and $K|_{W^\perp}$ have opposite signs. By our inductive hypothesis, $\dim(\ker((J+K)|_{W^\perp})) \equiv 2 \mod 4$. Adding back in the contribution from $W$, we get that $\dim(\ker(J+K)) \equiv 0 \mod 4$, as desired.
\end{proof}

\section{Next steps}\label{next_steps}\label{S5byS1}

Here is a brief summary of what we have accomplished in this paper: 
Let $X$ denote the space of distance-decreasing maps $S^2 \to S^2$, and let $\mathcal{F}^+$ denote the space of positive-sign fibrations of $S^3$ by oriented great circles. Gluck and Warner constructed a map $X \to \mathcal{F}^+$ which maps a function $f: S^2 \to S^2$ to the graph of $f$ in $S^2_- \times S^2_+ \cong G_2^+(\R^4)$, and as we saw in \thref{distance_decreasing}, they proved this map is a well-defined bijection. In \thref{new_formulation}, we reformulated Gluck and Warner's map in terms of circles of agreement of Hopf fibrations, and in \thref{main_result_ocs} we showed that these circles of agreement are guaranteed to exist for higher-dimensional Hopf fibrations as well.

In light of what we've done, the natural next step would be to state and prove a version of \thref{new_formulation} for higher-dimensional Hopf fibrations, but we run into an obstacle: Although circles of agreement are guaranteed to exist for higher-dimensional Hopf fibrations, they are no longer guaranteed to be unique. Here's a simple example: 
\begin{example}
Let $J$ and $K$ be orthogonal complex structures on $\R^{2n}$, and let $I = \begin{bmatrix}
        0 & -1 \\ 1 & 0
    \end{bmatrix}$ denote the standard complex structure on $\R^2$. The following chart shows that no matter what restrictions we place on the parity of $n$ or the signs of $J$ and $K$, it is always possible for $\ker(J-K)$ to have dimension greater than 2:
\begin{center}
\begin{tabular}{|Sc||Sc|Sc|}
\hline
 & $J, K$ same sign & $J, K$ opposite signs \\
\hline
\hline
$n$ odd &
$J = K = \begin{bmatrix} I & 0 & 0 \\ 0 & I & 0 \\ 0 & 0 & I \end{bmatrix}$ &
$J = \begin{bmatrix} I & 0 & 0 \\ 0 & I & 0 \\ 0 & 0 & I \end{bmatrix}, \quad
K = \begin{bmatrix} -I & 0 & 0 \\ 0 & I & 0 \\ 0 & 0 & I \end{bmatrix}$ \\
\hline
$n$ even &
$J = K = \begin{bmatrix} I & 0 \\ 0 & I \end{bmatrix}$ &
$J = \begin{bmatrix} I & 0 & 0 & 0 \\ 0 & I & 0 & 0 \\ 0 & 0 & I & 0 \\ 0 & 0 & 0 & I \end{bmatrix}, \quad
K = \begin{bmatrix} -I & 0 & 0 & 0 \\ 0 & I & 0 & 0 \\ 0 & 0 & I & 0 \\ 0 & 0 & 0 & I \end{bmatrix}$ \\
\hline
\end{tabular}
\end{center}
\end{example}
Without uniqueness, it's not entirely clear how to generalize the map $\varphi_p: S^2 \times S^2 \to G_2^+(\R^4)$ given in \thref{hopf_homeo} to higher dimensions, but there are some things we could try. If we focus our attention on fibrations of $S^5$, we do have a weaker form of uniqueness: By \thref{main_result_corollary}, if $H_1$ and $H_2$ are Hopf fibrations of $S^5$ with the same sign, then $\dim(\ker(I_{H_1} - I_{H_2})) \in \{2, 6\}$. In other words, $H_1$ and $H_2$ either share a unique oriented great circle, or are equal. 

So one possible way we might generalize \thref{hopf_homeo} is as follows: Let $\mathcal{H}^+$ denote the space of positive Hopf fibrations of $S^5$.
\begin{definition}[The map $\varphi: \{(H_1, H_2) \in \mathcal{H}^+ \times \mathcal{H}^+ \mid H_1 \neq H_2\} \to G_2^+(\R^6)$]
    Given positive Hopf fibrations $H_1, H_2 \in \mathcal{H}^+$ with $H_1 \neq H_2$, we define $\varphi(H_1, H_2)$ to equal the oriented 2-plane containing the unique circle of agreement of $H_1$ and $H_2$.
\end{definition}

One potential issue with this approach is that $\varphi$ is no longer injective. There are an infinite number (in fact, an $S^2$'s worth) of positive Hopf fibrations of $S^5$ containing any given oriented great circle, so there are an infinite number of choices of $(H_1, H_2)$ which $\varphi$ map to the same oriented 2-plane in $G_2^+(\R^6)$. This could be solved by taking an appropriate quotient of the domain. Perhaps future work could examine if this direction bears any fruit.

\section{Fibrations of $S^7$ by $S^3$}\label{S3_fibrations}

Gluck and Warner classified the fibrations of $S^n$ by great $k$-spheres in the case of $n = 3$, $k = 1$. So far, all of our attempts to generalize their result have aimed to increase the value of $n$. This final section is devoted to examining what happens when we try to increase $k$. 

We will first define Hopf fibrations of $S^7$ by $S^3$, which are the simplest nontrivial examples of fibrations of $S^n$ by great $k$-spheres where $k > 1$. We might hope that an analogue to \thref{opp_sign_share} holds. That is, any two opposite-sign Hopf fibrations of $S^7$ by $S^3$ are guaranteed to share a unique oriented great 3-sphere. However, this turns out to be false. In contrast to Section~\ref{S5byS1} where we observed that we had existence but not uniqueness, we will see in Theorems \ref{shared_S3_uniqueness} and \ref{S3_counterexample} that for Hopf fibrations of $S^7$ by $S^3$ of opposite sign, uniqueness of a shared $S^3$ holds, and it's actually existence that fails. 

Recall that the quaternions are a 4-dimensional non-commutative normed division algebra over $\R$ given by letting $(1,i,j,k)$ be an orthonormal basis for $\R^4$ and defining multiplication by $i^2 = j^2 = k^2 = ijk = -1$. We write $\H$ to denote the set of quaternions.
\begin{definition}[Standard Hopf fibration of $S^{4n-1}$ by $S^3$]
    View $S^{4n-1}$ as the unit sphere in $\R^{4n} \cong \H^n$, and view $S^3 \subseteq \H$ as the set of unit quaternions. $S^3$ acts on $S^{4n-1}$ via left-multiplication. We define the standard Hopf fibration to be the fibration of $S^{4n-1}$ whose fibers are the orbits of this action. Note that the fibers are oriented great $3$-spheres of $S^{4n-1}$.
\end{definition}
\begin{definition}[Hopf fibration of $S^{4n-1}$ by $S^3$]\thlabel{S7_hopf_sign}
    View $S^{4n-1}$ as the unit sphere in $\R^{4n}$, and let $H$ be the standard Hopf fibration of $S^{4n-1}$ by $S^3$. We define a Hopf fibration to be a fibration of the form $T(H)$, where $T: \R^{4n} \to \R^{4n}$ is an orthogonal linear transformation. If $\det(T) > 0$ then we say the Hopf fibration has positive sign. If $\det(T) < 0$ then we say it has negative sign. 
\end{definition}

\subsection{Hopf fibrations of $S^7$ by $S^3$ are linear}

This section closely mirrors the structure of Section~\ref{hopf_linear}. Our goal is to establish a correspondence $H$ between the soon-to-be-defined set of \emph{orthogonal quaternionic structures} on $\R^8$ and the set of Hopf fibrations of $S^7$ by $S^3$. Unlike in Section~\ref{hopf_linear}, this correspondence is not bijective, but we will see in \thref{correspondence2} that it is at least surjective. Because the arguments in this section are similar to those in Section~\ref{hopf_linear}, we will omit many of the proofs but include a reference to the corresponding result from Section~\ref{hopf_linear} when appropriate. 

\begin{definition}[Quaternionic structure, \ref{complex_structure}, \ref{orthogonal_complex_structure}]\thlabel{ortho_quat_struct}
    We define a quaternionic structure on $\R^{4n}$ to be a 3-tuple $(I, J, K)$ of complex structures on $\R^{4n}$ which satisfy $IJK = -\id$. If in addition $I$, $J$, and $K$ are orthogonal, then we call $(I,J,K)$ an orthogonal quaternionic structure.
\end{definition}

\begin{proposition}
    Let $(I,J,K)$ be an orthogonal quaternionic structure. Then the vectors $p$, $Ip$, $Jp$, and $Kp$ are mutually orthogonal.
\end{proposition}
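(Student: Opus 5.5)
The plan is to reduce every pairwise orthogonality to two facts: each of $I,J,K$ is skew-symmetric, and $I,J,K$ satisfy quaternionic multiplication relations. By \thref{complexStructureEquiv}, each of $I$, $J$, $K$ is an orthogonal complex structure, hence skew-symmetric. So for any vector $p$ we get $\langle p, Ip\rangle = \langle I^Tp, p\rangle = -\langle Ip, p\rangle$, which forces $\langle p, Ip\rangle = 0$. The same argument gives $\langle p, Jp\rangle = \langle p, Kp\rangle = 0$. This disposes of the three orthogonalities involving $p$ itself.

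For the remaining three pairs, I first derive the quaternion relations from $IJK = -\id$ together with $I^2 = J^2 = K^2 = -\id$.
\begin{itemize}
\item Multiplying $IJK = -\id$ on the right by $K$ gives $IJK^2 = -K$, so $-IJ = -K$, i.e.\ $IJ = K$.
\item Multiplying $IJ = K$ on the left by $I$ gives $-J = IK$.
\item Multiplying $IJ = K$ on the right by $J$ gives $-I = KJ$.
\end{itemize}

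With these relations I compute each inner product using orthogonality of the operators.
\begin{itemize}
\item For $Ip$ and $Jp$: since $I$ is orthogonal, $\langle Ip, Jp\rangle = \langle I^2p, IJp\rangle = \langle -p, Kp\rangle = 0$, by the first step.
\item For $Ip$ and $Kp$: $\langle Ip, Kp\rangle = \langle I^2p, IKp\rangle = \langle -p, -Jp\rangle = 0$.
\item For $Jp$ and $Kp$: $\langle Jp, Kp\rangle = \langle KJp, K^2p\rangle = \langle -Ip, -p\rangle = 0$.
\end{itemize}
Alternatively, each of these can be handled directly with skew-symmetry, e.g.\ $\langle Ip, Jp\rangle = -\langle p, IJp\rangle = -\langle p, Kp\rangle = 0$.

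The only step needing care is deriving the relations $IJ = K$, $IK = -J$, $KJ = -I$ from the single axiom $IJK = -\id$. This is a short algebraic manipulation, so I expect no real obstacle. Note that $p$ need not be a unit vector, and when $p = 0$ the statement is trivial.
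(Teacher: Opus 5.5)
Your proof is correct and takes essentially the same approach as the paper. Skew-symmetry handles the pairs involving $p$. Orthogonality of the operators, combined with the relations derived from $IJK=-\id$, handles the remaining pairs; the paper writes out only $\langle Ip, Jp\rangle$ and leaves $IJ=K$ and the other two cases implicit, which you make explicit.
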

\begin{proof}
    $p$ is orthogonal to $Ip$, $Jp$, and $Kp$ since $I$, $J$, and $K$ are skew-symmetric. $Ip$ is orthogonal to $Jp$ because 
    \begin{align}
        \langle Ip, Jp \rangle &= \langle I^2p, IJp \rangle \quad \text{since $I$ is orthogonal} \\
        &= \langle -p, Kp \rangle \\
        &= 0 \hspace{1.99 cm} \text{since $K$ is skew-symmetric}.
    \end{align}
    The same argument can be used to show $\langle Jp, Kp \rangle = 0$ and $\langle Kp, Ip \rangle = 0$.
\end{proof}

\begin{definition}[Sign of a quaternionic structure, \ref{signCheck}]\thlabel{sign_of_quat}
    Let $(I,J,K)$ be a quaternionic structure on $\R^{4n}$. Fix a decomposition $\R^{4n} = W_1 \oplus \cdots \oplus W_n$ where each $W_k$ is a 4-dimensional subspace invariant under $I$, $J$ and $K$. For each $k \in \{1, \cdots, n\}$, choose a nonzero vector $e_k \in W_k$. We define the sign of $(I,J,K)$ to equal the sign of the determinant of the $4n \times 4n$ matrix
    \begin{equation}\label{quat_sign_matrix}
        \begin{bmatrix}
            e_1 & Ie_1 & Je_1 & Ke_1 & \cdots & e_n & Ie_n & Je_n & Ke_n
        \end{bmatrix},
    \end{equation}
    where each entry is viewed as a column vector. 
\end{definition}

\begin{proposition}\thlabel{sign_quat_to_comp}
    Let $(I,J,K)$ be a quaternionic structure on $\R^{4n}$. Then the sign of $(I,J,K)$ as a quaternionic structure must agree with the signs of $I$, $J$, and $K$ as complex structures.
\end{proposition}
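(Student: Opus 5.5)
We need to show that the sign of $(I,J,K)$ agrees with the sign of each of $I$, $J$, $K$. Fix the decomposition $\R^{4n} = W_1 \oplus \cdots \oplus W_n$ and vectors $e_k \in W_k$ as in \thref{sign_of_quat}. Since each $W_k$ is $I$-invariant, $J$-invariant and $K$-invariant, the four vectors $e_k, Ie_k, Je_k, Ke_k$ lie in $W_k$. The plan is to compare, block by block, the matrix \eqref{quat_sign_matrix} with the matrices from \thref{signCheck} for $I$, $J$ and $K$, each built from decompositions that refine the $W_k$.

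\textbf{Linear independence.} We first check that $e_k, Ie_k, Je_k, Ke_k$ form a basis of $W_k$. In the orthogonal case the preceding proposition gives this immediately. In general, suppose $ae + bIe + cJe + dKe = 0$ with $(a,b,c,d) \neq 0$. Then the quaternion $q = a + bi + cj + dk$ acts on $W_k$ through the representation $i \mapsto I$, $j \mapsto J$, $k \mapsto K$, and it kills $e$. This representation is well-defined because $IJK = -\id$ together with $I^2 = J^2 = K^2 = -\id$ gives the quaternion relations (for instance $IJ = K$). Since $q$ is invertible with inverse $\bar q/|q|^2$, we get $e = 0$, a contradiction. (For a general $K$, whose sign is defined via a possibly non-orthogonal decomposition, the sign is still well defined by a connectedness argument; I would use this tacitly, as the paper does for \thref{signCheck}.)

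\textbf{Sign of $I$.} Refine $W_k = \lspan\{e_k, Ie_k\} \oplus \lspan\{Je_k, IJe_k\}$. Both summands are $I$-invariant by \thref{Iinvariant}, and $IJe_k = Ke_k$ since $IJ = K$. Taking $e_k$ and $Je_k$ as the chosen vectors in \thref{signCheck}, the matrix for $I$ has columns $e_k, Ie_k, Je_k, Ke_k$ for each $k$. This is exactly \eqref{quat_sign_matrix}, so the two signs agree.

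\textbf{Sign of $J$.} Use the splitting $W_k = \lspan\{e_k, Je_k\} \oplus \lspan\{Ie_k, JIe_k\}$. Here $JIe_k = -Ke_k$, so the block of columns is $(e_k, Je_k, Ie_k, -Ke_k)$. Relative to $(e_k, Ie_k, Je_k, Ke_k)$ this is one transposition and one sign flip, so the determinant is multiplied by $(-1)(-1) = +1$. Summed over all blocks, the sign of $J$ equals the sign of \eqref{quat_sign_matrix}.

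\textbf{Sign of $K$.} Use $W_k = \lspan\{e_k, Ke_k\} \oplus \lspan\{Ie_k, KIe_k\}$ with $KI = J$. The block of columns is $(e_k, Ke_k, Ie_k, Je_k)$, which is a cyclic permutation of the last three entries of $(e_k, Ie_k, Je_k, Ke_k)$. A 3-cycle is even, so the sign is again unchanged.

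\textbf{Main obstacle.} The delicate part is getting the quaternion identities right from $IJK = -\id$ alone. Multiplying on the right by $K$ gives $IJ(-\id) = -K$, so $IJ = K$. From this, $JK = I$ and $KI = J$ follow by similar manipulations, and the anticommutation relations such as $JI = -K$ then follow as well. With these in hand, the remaining work is the routine permutation-sign bookkeeping above.
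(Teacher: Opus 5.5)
Your proposal is correct and follows the paper's own argument: use $K = IJ$ to recognize the quaternionic sign matrix as the sign matrix of $I$ for the splitting $W_k = \lspan\{e_k, Ie_k\} \oplus \lspan\{Je_k, Ke_k\}$, then handle $J$ and $K$ with analogous splittings and a permutation/sign count. Your explicit bookkeeping for $J$ and $K$, and your check that $e_k, Ie_k, Je_k, Ke_k$ are linearly independent, supply details the paper leaves as ``similarly'' or uses implicitly.
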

\begin{proof}
    We may rewrite the matrix \eqref{quat_sign_matrix} as 
    \begin{equation}
        \begin{bmatrix}
            e_1 & I(e_1) & Je_1 & I(Je_1) & \cdots & e_n & I(e_n) & Je_n & I(Je_n)
        \end{bmatrix}.
    \end{equation}
    By \thref{signCheck}, the sign of the determinant of this matrix equals the sign of $I$ as a complex structure, and thus $(I,J,K)$ and $I$ have the same sign. $J$ and $K$ follow similarly.
\end{proof}

Let $\mathcal{Q}$ denote the set of quaternionic structures on $\R^{4n}$ and let $\mathcal{Q}_\perp \subseteq \mathcal{Q}$ denote the set of orthogonal quaternionic structures. Let $\mathcal{F}$ denote the set of fibrations of $S^{4n-1} \subseteq \R^{4n}$ by oriented great 3-spheres.
\begin{definition}[The map $H: \mathcal{Q} \to \mathcal{F}$, \ref{the_map_I}]\thlabel{the_map_H}
    Given a quaternionic structure $(I,J,K)$ on $\R^{4n}$, we define $H_{(I,J,K)}$ to be the fibration of $S^{4n-1}$ whose fibers are of the form $S^{4n-1} \cap \lspan(p, Ip, Jp, Kp)$ as $p$ ranges over $S^{4n-1}$. Note that this is well-defined since the 4-plane $\lspan(p, Ip, Jp, Kp)$ is simultaneously $I$-, $J$-, and $K$-invariant.
\end{definition}

\begin{proposition}[\ref{win_condition}]\thlabel{S7_agreement_criterion}
    Let $(I_1, J_1, K_1)$ and $(I_2, J_2, K_2)$ be two quaternionic structures on $\R^{4n}$, and let $p \in S^{4n-1}$ be a unit vector. Then $H_{(I_1, J_1, K_1)}$ and $H_{(I_2, J_2, K_2)}$ share an oriented fiber containing $p$ if and only if 
    \begin{equation}
    \lspan(p, I_1p, J_1p, K_1p) = \lspan(p, I_2p, J_2p, K_2p).
    \end{equation}
\end{proposition}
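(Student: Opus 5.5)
The plan is to unwind the definition of $H_{(I,J,K)}$ from \thref{the_map_H}, in the same way \thref{win_condition} unwinds \thref{the_map_I}. The fiber of $H_{(I_1,J_1,K_1)}$ through $p$ is $S^{4n-1}\cap\lspan(p,I_1p,J_1p,K_1p)$, and the fiber of $H_{(I_2,J_2,K_2)}$ through $p$ is $S^{4n-1}\cap\lspan(p,I_2p,J_2p,K_2p)$. Since the fibers of a fibration partition $S^{4n-1}$, the two fibrations share a fiber containing $p$ exactly when these two particular fibers coincide as oriented great $3$-spheres. A great $3$-sphere is determined by the $4$-plane through the origin that it spans. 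So, up to orientation, the statement reduces to equality of the two $4$-planes.

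First I will record that each $4$-plane $P_i=\lspan(p,I_ip,J_ip,K_ip)$ really is $4$-dimensional. For orthogonal structures this follows from the preceding proposition, since the four vectors are mutually orthogonal and nonzero. For a general quaternionic structure I would argue instead that $P_i$ is invariant under $I_i,J_i,K_i$, which is noted in \thref{the_map_H}; I will take this together with the paper's implicit assumption that the fibers are great $3$-spheres.

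Forward direction: if the fibers through $p$ agree, their spans agree, which gives the displayed equality.

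Backward direction: if the spans agree, then the fibers through $p$ are the same unoriented great $3$-sphere. It remains to check that the orientations agree. The orientation of the fiber of $H_{(I_i,J_i,K_i)}$ should be the one given by the ordered basis $(p,I_ip,J_ip,K_ip)$, by analogy with the positive rotation in \thref{the_map_I}.

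The main obstacle is this orientation step, because the paper has not explicitly specified the orientation on the fibers of $H_{(I,J,K)}$. My approach is to note that the orientation given by $(q,I_iq,J_iq,K_iq)$ varies continuously in $q\in P_i\cap S^{4n-1}$ and never degenerates, so it is constant on the fiber. I would then compare the two orientations at $p$ via the determinant of the change of basis between $(p,I_1p,J_1p,K_1p)$ and $(p,I_2p,J_2p,K_2p)$. If the intended meaning of ``share an oriented fiber'' includes this orientation, the equivalence must be read with orientation built into the span equality, i.e.\ equality as oriented $4$-planes. In that case both directions are the same tautology as in \thref{win_condition}. Otherwise I would state the result for unoriented fibers and remark that orientation agreement is the extra condition that the determinant above is positive.
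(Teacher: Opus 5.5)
Your proposal is correct and takes the paper's approach: the paper's proof is the one line that the claim follows directly from \thref{the_map_H}, which is exactly your unwinding of the definition, and your checks that each span is $4$-dimensional and that the orientation from $(q,Iq,Jq,Kq)$ is constant along a fiber fill in details the paper leaves implicit. The orientation ambiguity you flag is settled by the paper's notation, where $\lspan(\cdots)$ with an ordered tuple denotes the oriented plane and $\lspan\{\cdots\}$ the unoriented one (compare the proof of \thref{S3_counterexample}), so your oriented reading is the intended one and the equivalence is the tautology you describe.
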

\begin{proof}
    This directly follows from \thref{the_map_H}.
\end{proof}

\begin{definition}[Standard quaternionic structure, \ref{standard_complex_structure}]
Identify the quaternions $\H$ with $\R^4$, and let $L_i, L_j, L_k: \R^4 \to \R^4$ to be the matrices corresponding to left-multiplication by $i$, $j$, and $k$, respectively: 
    \begin{equation}
        L_i = \begin{bmatrix}
            0 & -1 & 0 & 0 \\
            1 & 0 & 0 & 0 \\
            0 & 0 & 0 & -1 \\
            0 & 0 & 1 & 0
        \end{bmatrix}, \quad
        L_j = \begin{bmatrix}
            0 & 0 & -1 & 0 \\
            0 & 0 & 0 & 1 \\
            1 & 0 & 0 & 0\\
            0 & -1 & 0 & 0
        \end{bmatrix}, \quad 
        L_k = \begin{bmatrix}
            0 & 0 & 0 & -1 \\
            0 & 0 & -1 & 0 \\
            0 & 1 & 0 & 0 \\
            1 & 0 & 0 & 0
        \end{bmatrix}.
    \end{equation}
    We define the standard quaternionic structure on $\R^{4n}$ to equal 
    \begin{equation}
        \mleft( \begin{bmatrix}
            L_i & \\
            & \ddots \\
             & & L_i
        \end{bmatrix}, \begin{bmatrix}
            L_j &  \\
            & \ddots \\
             & & L_j
        \end{bmatrix}, \begin{bmatrix}
            L_k &  \\
            & \ddots \\
             & & L_k
        \end{bmatrix}\mright).
    \end{equation}
    It's easy to see that this is mapped by $H$ to the standard Hopf fibration.
\end{definition}

\begin{fact}[\ref{ocs_conjugation}, \ref{conjugation}]\thlabel{quat_conjugation}
    Let $(I,J,K)$ be an orthogonal quaternionic structure on $\R^{4n}$ and let $T: \R^{4n} \to \R^{4n}$ be an orthogonal transformation. Then $(TIT^{-1}, TJT^{-1}, TKT^{-1})$ is an orthogonal quaternionic structure. Furthermore, $T(H_{(I, J, K)}) = H_{(TIT^{-1}, TJT^{-1}, TKT^{-1})}$.
\end{fact}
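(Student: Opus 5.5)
The statement has two parts, and I will handle them separately, mirroring \thref{ocs_conjugation} and \thref{conjugation}.

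\textbf{Part 1: the conjugate triple is an orthogonal quaternionic structure.} Write $I' = TIT^{-1}$, $J' = TJT^{-1}$, $K' = TKT^{-1}$. By \thref{ocs_conjugation}, each of $I'$, $J'$, $K'$ is an orthogonal complex structure, so only the quaternionic relation remains to be checked. The inner factors $T^{-1}T$ cancel:
\begin{equation}
I'J'K' = T\,IJK\,T^{-1} = T(-\id)T^{-1} = -\id .
\end{equation}
Hence $(I', J', K')$ is an orthogonal quaternionic structure.

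\textbf{Part 2: $T$ carries $H_{(I,J,K)}$ to $H_{(I',J',K')}$.} By \thref{the_map_H}, the fibers of $H_{(I,J,K)}$ are the sets $S^{4n-1} \cap \lspan(p, Ip, Jp, Kp)$ for $p \in S^{4n-1}$. Since $T$ is orthogonal, it maps $S^{4n-1}$ onto itself and commutes with taking spans. So the image of such a fiber is $S^{4n-1} \cap \lspan(Tp, TIp, TJp, TKp)$. Put $q = Tp$. Then $TIp = TIT^{-1}q = I'q$, and likewise $TJp = J'q$ and $TKp = K'q$. Thus the image fiber is $S^{4n-1} \cap \lspan(q, I'q, J'q, K'q)$, which is exactly the fiber of $H_{(I',J',K')}$ through $q$. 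As $p$ ranges over $S^{4n-1}$, so does $q = Tp$, so the fibers correspond bijectively. This gives $T(H_{(I,J,K)}) = H_{(I',J',K')}$ as unoriented fibrations.

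\textbf{The main obstacle: orientations.} The fibers are oriented great $3$-spheres, so I must also check that the orientations match. I take the orientation of the fiber through $p$ to be the one given by the ordered basis $(p, Ip, Jp, Kp)$. This basis is orthonormal, using the preceding proposition. The orientation is independent of the chosen $p$ in the fiber, since the set of points of the fiber is connected and the determinant never vanishes; this is the same intermediate-value argument as in \thref{sign}.

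The orientation $T$ induces on the image fiber is the one given by $(Tp, TIp, TJp, TKp)$. By Part 2 this ordered tuple equals $(q, I'q, J'q, K'q)$, which is the defining oriented basis of the fiber of $H_{(I',J',K')}$ through $q$. So the orientations agree without any determinant computation, and the sign of $T$ never enters.
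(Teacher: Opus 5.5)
Your proof is correct and follows the route the paper indicates by citing \thref{ocs_conjugation} and \thref{conjugation} in place of a written proof: conjugation by $T$ preserves each orthogonal complex structure and the relation $IJK=-\id$, and $T$ carries the fiber $\lspan(p,Ip,Jp,Kp)$ to $\lspan(q,I'q,J'q,K'q)$ with $q=Tp$. Your check that the orientations match, including that the orientation does not depend on the chosen $p$ in the fiber, supplies a detail the paper leaves implicit.
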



\begin{fact}[\ref{correspondence}]\thlabel{correspondence2}
    The map $H|_{\mathcal{Q}_\perp}$ is a surjection from the set of orthogonal quaternionic structures onto the set of Hopf fibrations of $S^{4n-1}$ by $S^3$.
\end{fact}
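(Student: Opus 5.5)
The plan mirrors the proofs of \thref{ocs_implies_hopf} and \thref{correspondence}. Since $H$ is defined on all quaternionic structures, we need two things: $H|_{\mathcal{Q}_\perp}$ lands in the set of Hopf fibrations, and every Hopf fibration is hit. Both reduce, via \thref{quat_conjugation}, to one fact: every orthogonal quaternionic structure is orthogonally conjugate to the standard one. This is the analogue of \thref{conjugate_to_standard}.

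\emph{Surjectivity.} This is immediate from \thref{quat_conjugation}. A Hopf fibration has the form $T(H_0)$, where $H_0$ is the standard Hopf fibration and $T$ is orthogonal. The standard quaternionic structure $(I_0,J_0,K_0)$ is orthogonal, and $H$ maps it to $H_0$. Hence $T(H_0)=H_{(TI_0T^{-1},TJ_0T^{-1},TK_0T^{-1})}$, and the triple appearing here lies in $\mathcal{Q}_\perp$ by \thref{quat_conjugation}.

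\emph{Well-definedness into Hopf fibrations.} Let $(I,J,K)\in\mathcal{Q}_\perp$. I will build an orthonormal basis $\{e_1,Ie_1,Je_1,Ke_1,\dots,e_n,Ie_n,Je_n,Ke_n\}$ inductively, as in \thref{orthogonalDecomposition}.

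First I establish the relations $IJ=K$, $JK=I$, $KI=J$, together with the anticommutation relations. Start from $IJK=-\id$ and multiply on the right by $K$; since $K^2=-\id$, this gives $IJ=K$. Then $JK=J(IJ)$. To show this equals $I$, note $IJK=-\id$ gives $JK=-I^{-1}=I$. The remaining relations follow similarly.

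With these relations, $W=\lspan(p,Ip,Jp,Kp)$ is invariant under $I$, $J$ and $K$. The preceding proposition shows these four vectors are orthonormal when $p$ is a unit vector. Because $I,J,K$ are orthogonal, $W^\perp$ is also invariant under all three. So I can pick a unit vector $e_{k+1}$ orthogonal to the previously constructed blocks and repeat.

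Let $T$ be the orthogonal matrix sending the standard basis to this basis. In the new basis, $I,J,K$ act on each block exactly as $L_i,L_j,L_k$ act on $(1,i,j,k)$. Indeed, $I(Je)=Ke$ and $I(Ke)=IKe=-Je$, which matches $L_i$ sending $j\mapsto k$ and $k\mapsto -j$; the analogous checks hold for $J$ and $K$. Therefore $(I,J,K)=(TI_0T^{-1},TJ_0T^{-1},TK_0T^{-1})$. By \thref{quat_conjugation}, $H_{(I,J,K)}=T(H_0)$, which is a Hopf fibration.

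The main obstacle is the second step, specifically the algebra of the relations. From $IJK=-\id$ I must extract $IJ=K$ together with the anticommutation relations, and then confirm that the block matrices match the given $L_i,L_j,L_k$ exactly, including signs. Only with an exact match can the conjugating basis be taken to be the ordered basis $(e,Ie,Je,Ke)$. If a sign fails to match, I would instead adjust the ordering within each block, for example replacing $Ke$ with $-Ke$ to fix the sign. Such a change may alter $\det T$, but that is harmless, since surjectivity is the only claim being made.
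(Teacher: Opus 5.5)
Your proposal is correct and follows the argument the paper intends when it omits the proof and points back to \thref{correspondence}. Surjectivity comes from conjugating the standard structure, as in \thref{hopf_implies_ocs}, and the image lies in the Hopf fibrations because you build an adapted orthonormal basis $(e_k, Ie_k, Je_k, Ke_k)$, as in \thref{conjugate_to_standard} and \thref{ocs_implies_hopf}. The relations you derive from $IJK=-\id$ already force each block to match $L_i, L_j, L_k$ exactly, so the sign-adjustment contingency in your last paragraph is never needed; replacing $Ke$ by $-Ke$ would in fact break the match for $I$.
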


\subsection{Uniqueness of a shared 3-sphere}
\begin{theorem}\thlabel{shared_S3_uniqueness}
    Let $H^+$ and $H^-$ be Hopf fibrations of $S^7$ by $S^3$ of opposite sign. Then $H^+$ and $H^-$ share at most one oriented great 3-sphere.
\end{theorem}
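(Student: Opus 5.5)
The plan is to suppose that $H^+$ and $H^-$ share two distinct oriented great 3-spheres, with 4-planes $W_1 \neq W_2$, and derive a contradiction. By \thref{correspondence2} I write $H^+ = H_{(I_1,J_1,K_1)}$ and $H^- = H_{(I_2,J_2,K_2)}$ for orthogonal quaternionic structures. By \thref{sign_quat_to_comp}, these structures have opposite signs.

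\textbf{Step 1: the orthogonal complement.} Since $W_1$ is a fiber of both fibrations, it is invariant under all six operators. Because they are orthogonal, $W_1^\perp$ is also invariant under all six, so $W_1^\perp$ is an unoriented fiber of both. Pick $e_1 \in W_1$ and $e_2 \in W_1^\perp$ in \thref{sign_of_quat}. The sign of each structure is then the product of two orientation signs. The first is the orientation of $(e_1, Ie_1, Je_1, Ke_1)$ on $W_1$. The second is the orientation of $(e_2, Ie_2, Je_2, Ke_2)$ on $W_1^\perp$. The first factor agrees for the two structures because $W_1$ is a shared \emph{oriented} fiber. Hence the second factor differs: $W_1^\perp$ is a fiber of both, but with opposite orientations. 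In particular $W_2 \neq W_1^\perp$. Also, distinct fibers of a fibration are disjoint, so $W_2 \cap W_1 = 0$.

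\textbf{Step 2: $W_2$ as a graph.} Since $W_2 \cap W_1 = 0$, and also (by dimension count) $W_2 \cap W_1^\perp = 0$ unless $W_2 = W_1^\perp$, the projections of $W_2$ onto both $W_1$ and $W_1^\perp$ are isomorphisms. So $W_2$ is the graph of an invertible linear map $T: W_1 \to W_1^\perp$. For structure $s \in \{1,2\}$, invariance of $W_2$ under $I_s$, $J_s$, $K_s$ is equivalent to the intertwining relations
\[
T \, I_s|_{W_1} = I_s|_{W_1^\perp}\, T,
\]
and likewise for $J_s$ and $K_s$.

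Now take a nonzero $x \in W_1$. Then $T$ maps the frame $(x, I_s x, J_s x, K_s x)$ to $(Tx, I_s Tx, J_s Tx, K_s Tx)$. Fix orientations on $W_1$ and $W_1^\perp$. The sign of $\det T$ equals the orientation of the first frame times the orientation of the second. By Step 1, the product of these two orientations is $+$ for one structure and $-$ for the other, since the frame orientations agree on $W_1$ and differ on $W_1^\perp$. So $\det T$ would have to be both positive and negative, which is a contradiction.

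\textbf{Expected obstacle.} The main point needing care is Step 2. I must check that "$W_2$ is a fiber for structure $s$" really gives the intertwining relation with the \emph{same} $T$ for both structures. This holds because $T$ is determined by $W_2$ alone, and invariance of a graph under a block-diagonal operator $\mathrm{diag}(A,B)$ is exactly $TA = BT$.

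I must also check that the orientation of each frame is independent of the chosen nonzero vector; this holds by connectedness, as in \thref{sign}. Finally, I must check that the orientation of the shared oriented fiber $W_2$ plays no further role. The contradiction comes only from $W_1$ being shared with matching orientation, so $W_2$ need only be a shared unoriented fiber.
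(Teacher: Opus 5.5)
Your proof is correct, but it takes a longer route than the paper's.

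\textbf{The paper's route.} The paper applies \thref{sign_of_quat} directly to the decomposition $\R^8 = P \oplus Q$ formed by the two shared fibers. These are disjoint and invariant under both structures, and the definition does not require the decomposition to be orthogonal. Since $P$ and $Q$ are both shared as \emph{oriented} planes, the two $8 \times 8$ frame matrices have determinants of the same sign. So the structures have the same sign, which is a two-line contradiction.

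\textbf{Your route.} You insist on the orthogonal splitting $W_1 \oplus W_1^\perp$. This forces three extra steps:
\begin{enumerate}
\item You show first that $W_1^\perp$ is shared with opposite orientations. This is the observation the paper makes in the proof of \thref{S3_counterexample}.
\item You rule out the case $W_2 = W_1^\perp$.
\item Otherwise, you realize $W_2$ as the graph of an isomorphism $T\colon W_1 \to W_1^\perp$ that intertwines both structures, so the sign of $\det T$ comes out inconsistently.
\end{enumerate}
All of these steps check out, including the equivalence between invariance of the graph and $T A = B T$, and the frame-orientation computation of the sign of $\det T$.

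\textbf{What your route buys.} Your Step 2 proves something slightly stronger. If opposite-sign structures share an oriented fiber $W_1$, then the only other $4$-plane invariant under both, even ignoring orientation, is $W_1^\perp$.

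\textbf{A correction to your closing remark.} The orientation of $W_2$ is not irrelevant. You use it in Step 1 to exclude $W_2 = W_1^\perp$, and that case really needs it, because $W_1^\perp$ is always a shared unoriented fiber.

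\textbf{A minor citation point.} To pass from ``$H^+$, $H^-$ have opposite sign'' to ``the quaternionic structures have opposite sign,'' you need the sign of $H_{(I,J,K)}$ (defined via $\det T$) to match the sign in \thref{sign_of_quat}. That follows from conjugation (\thref{quat_conjugation}), not from \thref{sign_quat_to_comp}. The paper uses this identification implicitly as well.
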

\begin{proof}
    By \thref{correspondence2}, we may choose a pair of orthogonal quaternionic structures $(I^+, J^+, K^+)$ and $(I^-, J^-, K^-)$ on $\R^8$ such that $H_{(I^+, J^+, K^+)} = H^+$ and $H_{(I^-, J^-, K^-)} = H^-$. 
    
    Now, suppose for contradiction that $H^+$ and $H^-$ share two distinct oriented 3-spheres. Let $P,Q \in G_4^+(\R^8)$ be the oriented planes containing these 3-spheres, and let $p$ and $q$ be unit vectors in $P$ and $Q$, respectively. By \thref{S7_agreement_criterion} we have
    \begin{align}
        P &= \lspan(p, I^+p, J^+p, K^+p) = \lspan(p, I^-p, J^-p, K^-p), \label{first1} \\
        Q &= \lspan(q, I^+q, J^+q, K^+q) = \lspan(q, I^-q, J^-q, K^-q).\label{second1}
    \end{align}
    By \thref{sign_of_quat} we have that the sign of $(I^+, J^+, K^+)$ equals the sign of the determinant of
    \begin{equation}
        \begin{bmatrix}
            p & I^+p & J^+p & K^+p & q & I^+q & J^+q & K^+q
        \end{bmatrix},
    \end{equation}
    which by \eqref{first1} and \eqref{second1} equals the sign of the determinant of 
    \begin{equation}
        \begin{bmatrix}
            p & I^-p & J^-p & K^-p & q & I^-q & J^-q & K^-q
        \end{bmatrix},
    \end{equation}
    which by \thref{sign_of_quat} equals the sign of $(I^-, J^-, K^-)$. Thus, $(I^+, J^+, K^+)$ and $(I^-, J^-, K^-)$ have the same sign, meaning $H^+$ and $H^-$ have the same sign, a contradiction.
\end{proof}

The above theorem shows that given a Hopf fibration of $S^7$ by $S^3$, knowing its orientation on a pair of orthogonal 3-spheres completely determines its sign. However, it's worthwhile to note that unlike in the case of great circle fibrations, this is actually not enough to determine the fibration itself:
\begin{theorem}
    Identify $\R^8$ with $\H^2$, let 
    \begin{equation}
        (I_1, J_1, K_1) = \mleft( \begin{bmatrix}
            L_i & 0 \\
            0 & L_i
        \end{bmatrix}, \begin{bmatrix}
            L_j & 0 \\ 0 & L_j
        \end{bmatrix}, \begin{bmatrix}
            L_k & 0 \\ 0 & L_k
        \end{bmatrix}\mright)
    \end{equation}
    be the standard quaternionic structure, and let 
    \begin{equation}
        (I_2, J_2, K_2) = \mleft( \begin{bmatrix}
            L_i & 0 \\
            0 & L_i
        \end{bmatrix}, \begin{bmatrix}
            L_j & 0 \\ 0 & -L_j
        \end{bmatrix}, \begin{bmatrix}
            L_k & 0 \\ 0 & -L_k
        \end{bmatrix}\mright)
    \end{equation}
    be the standard quaternionic structure conjugated by $\begin{bmatrix}
        \textnormal{id} & 0 \\ 0 & L_i
    \end{bmatrix}$. 
    
    \smallskip
    
    \noindent Then the Hopf fibrations $H_{(I_1, J_1, K_1)}$ and $H_{(I_2, J_2, K_2)}$ share a pair of orthogonal 3-spheres and yet are not equal.
\end{theorem}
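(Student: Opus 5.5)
The plan is to exhibit the two shared 3-spheres explicitly, namely the coordinate 4-planes $P = \H \oplus 0$ and $Q = 0 \oplus \H$, and then exhibit one point where the two fibrations have different fibers. Since $P \perp Q$, these are a pair of orthogonal 3-spheres. Throughout, \thref{S7_agreement_criterion} reduces every question to comparing the 4-planes $\lspan(p, I_1p, J_1p, K_1p)$ and $\lspan(p, I_2p, J_2p, K_2p)$.

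\textbf{Preliminary checks.} First I would verify that $(I_2, J_2, K_2)$ really is the stated conjugate. Conjugating by $T = \operatorname{diag}(\id, L_i)$ leaves the first block unchanged. On the second block we have $L_i L_i L_i^{-1} = L_i$, $L_i L_j L_i^{-1} = -L_j$ and $L_i L_k L_i^{-1} = -L_k$, since left multiplication by $i$ anticommutes with $j$ and $k$. Together with \thref{quat_conjugation}, this shows that $(I_2,J_2,K_2)$ is an orthogonal quaternionic structure whose fibration is the Hopf fibration $T(H_{(I_1,J_1,K_1)})$. Moreover $\det T = \det L_i = 1$, so both fibrations are positive Hopf fibrations.

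\textbf{Sharing $P$ and $Q$.} Next I would check that both fibrations contain $P$ and $Q$ as oriented fibers. Take $p = (1,0) \in P$. Both structures act on the first factor by $L_i, L_j, L_k$, so both 4-planes equal $\H \oplus 0$. Since the ordered frames $(p, I_1p, J_1p, K_1p)$ and $(p, I_2p, J_2p, K_2p)$ coincide, the orientations also agree.

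Now take $q = (0,1) \in Q$. The first frame is $(q, (0,i), (0,j), (0,k))$ and the second is $(q, (0,i), (0,-j), (0,-k))$. Both span $0 \oplus \H$. The second frame differs from the first by two sign changes, so the change-of-basis determinant is $+1$ and the orientations agree. This orientation check is the point requiring the most care; flipping two vectors rather than one is exactly what makes it work.

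\textbf{Non-equality.} Finally I would pick the mixed point $r = (1,1)/\sqrt{2}$. Then
\[
\lspan(r, I_1r, J_1r, K_1r) = \{(x,x) : x \in \H\},
\]
while $\lspan(r, I_2r, J_2r, K_2r)$ contains $J_2 r \propto (j,-j)$. This vector does not lie in the diagonal $\{(x,x)\}$, so the two 4-planes through $r$ differ. By \thref{S7_agreement_criterion}, the two fibers through $r$ are different, and hence the fibrations are not equal.

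I expect no genuine obstacle; the only subtle step is the orientation comparison on $Q$ described above.
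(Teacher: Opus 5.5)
Your proposal is correct and follows essentially the same route as the paper: the same test points $p=(1,0)$, $q=(0,1)$ and $r=(1,1)$, with the agreement criterion reducing everything to comparing the 4-planes $\lspan(x, Ix, Jx, Kx)$. You also normalize $r$, which the criterion technically requires, and you explicitly check that the two frames on $Q$ differ by $\operatorname{diag}(1,1,-1,-1)$, so the orientations agree; the paper leaves that check implicit when it writes $Q_1 = Q_2$.
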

\begin{proof}
    By \thref{S7_agreement_criterion}, to show that $H_{(I_1, J_1, K_1)}$ and $H_{(I_2, J_2, K_2)}$ share a pair of orthogonal 3-spheres it suffices to find a pair of vectors $p,q \in S^7$ such that 
    \begin{equation}
        \underbrace{\lspan(p, I_1p, J_1p, K_1p)}_{\text{call this } P_1} = \underbrace{\lspan(p, I_2p, J_2p, K_2p)}_{\text{call this }P_2}
    \end{equation}
    and 
    \begin{equation}
        \underbrace{\lspan(q, I_1q, J_1q, K_1q)}_{\text{call this }Q_1} = \underbrace{\lspan(q, I_2q, J_2q, K_2q)}_{\text{call this }Q_2}
    \end{equation}
    with $P_1$ and $Q_1$ orthogonal. Letting $p = \begin{bmatrix}
        1 \\ 0
    \end{bmatrix}$ and $q = \begin{bmatrix}
        0 \\ 1
    \end{bmatrix}$, we get 
    \begin{equation}
        P_1 = P_2 = \lspan_\R\mleft( \begin{bmatrix}
        1 \\ 0
    \end{bmatrix}, \begin{bmatrix}
        i \\ 0
    \end{bmatrix}, \begin{bmatrix}
        j \\ 0
    \end{bmatrix}, \begin{bmatrix}
        k \\ 0
    \end{bmatrix} \mright)
    \end{equation}
    and 
    \begin{equation}
    Q_1 = Q_2 = \lspan_\R\mleft( \begin{bmatrix}
        0 \\ 1
    \end{bmatrix}, \begin{bmatrix}
        0 \\ i
    \end{bmatrix}, \begin{bmatrix}
        0 \\ j
    \end{bmatrix}, \begin{bmatrix}
        0 \\ k
    \end{bmatrix} \mright),
    \end{equation}
    and these are indeed orthogonal.

    To show that $H_{(I_1, J_1, K_1)}$ and $H_{(I_2, J_2, K_2)}$ are not the same fibration, it suffices to find a vector $r$ such that 
    \begin{equation}
        \underbrace{\lspan(r, I_1r, J_1r, K_1r)}_{\text{call this }R_1} \neq \underbrace{\lspan(r, I_2r, J_2r, K_2r)}_{\text{call this }R_2}.
    \end{equation}
    Letting $r = \begin{bmatrix}
        1 \\ 1
    \end{bmatrix}$, we get 
    \begin{equation}
        R_1 = \lspan_\R\mleft( 
        \begin{bmatrix}
            1 \\ 1
        \end{bmatrix}, 
        \begin{bmatrix}
            i \\ i
        \end{bmatrix},
        \begin{bmatrix}
            j \\ j
        \end{bmatrix},
        \begin{bmatrix}
            k \\ k
        \end{bmatrix}
        \mright) \neq \lspan_\R\mleft( 
        \begin{bmatrix}
            1 \\ 1
        \end{bmatrix}, 
        \begin{bmatrix}
            i \\ i
        \end{bmatrix},
        \begin{bmatrix}
            j \\ -j
        \end{bmatrix},
        \begin{bmatrix}
            k \\ -k
        \end{bmatrix}
        \mright) = R_2,
    \end{equation}
    completing the proof.
\end{proof}
Thus, unlike the situation with great circle fibrations where the set of points on which two Hopf fibrations agree is the kernel of a linear map (\thref{win_condition}), we find that the set of points on which two Hopf fibrations by $S^3$ agree need not even be a linear subspace.

\subsection{Failure of existence of a shared 3-sphere}
We first prove a lemma that will allow us to detect when two orthogonal quaternionic structures share a 4-dimensional invariant subspace:
\begin{lemma}\thlabel{detector}
    Let $(I^+, J^+, K^+)$ and $(I^-, J^-, K^-)$ be orthogonal quaternionic structures of opposite sign on $\R^4$. Then 
    \begin{equation}
        \ker(I^- + I^+) \cap \ker(J^- + J^+) \cap \ker(K^- + K^+) \neq \{0\}.
    \end{equation}
\end{lemma}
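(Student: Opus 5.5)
The plan is to reduce the statement to a fact about three commuting involutions on $\R^4$. First, for $v \in \R^4$ we have $(I^- + I^+)v = 0$ if and only if $I^-v = -I^+v$. Applying $I^+$ and using $(I^+)^2 = -\id$, this is equivalent to $I^+I^-v = v$. So $\ker(I^-+I^+)$ is the $(+1)$-eigenspace of $A = I^+I^-$. Similarly, $\ker(J^-+J^+)$ and $\ker(K^-+K^+)$ are the $(+1)$-eigenspaces of $B = J^+J^-$ and $C = K^+K^-$. The goal is therefore a common fixed vector of $A$, $B$, $C$.

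The structural input I would establish first is that every operator in $\{I^+, J^+, K^+\}$ commutes with every operator in $\{I^-, J^-, K^-\}$. I would argue this through the decomposition $\Lambda^2\R^4 = E_- \oplus E_+$ of Section~\ref{gw_argument}. Identify a skew-symmetric $J$ with the $2$-form $\langle J\cdot,\cdot\rangle$. By the computation in the proof of \thref{hopfs_are_slices}, together with \thref{correspondence} and \thref{sign_quat_to_comp}, the positive orthogonal complex structures correspond to (a fixed multiple of) unit vectors of one of $E_\pm$, and the negative ones to the other. Concretely, one can conjugate by an orthogonal $T$ (\thref{quat_conjugation}) so that the positive structure is the standard one $(L_i, L_j, L_k)$, which is left multiplication on $\H$. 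Then $E_\mp$ is spanned by right multiplications by $i, j, k$, or their conjugates by an orientation-reversing map, and left and right multiplication commute. The same identification gives the trace facts. We have $\operatorname{tr}(I^+I^-) = -\operatorname{tr}\bigl(I^+(I^-)^T\bigr)$, which is a nonzero multiple of $\langle \omega_{I^+}, \omega_{I^-}\rangle$. This vanishes because $E_+ \perp E_-$. So $\operatorname{tr}A = \operatorname{tr}B = \operatorname{tr}C = 0$. I expect this step, pinning down commutation and trace-zero cleanly from the paper's earlier results rather than by brute force, to be the main obstacle. If the form-theoretic route is awkward, I would fall back on the explicit normal form $L_i, L_j, L_k$ versus right multiplications, conjugated by a fixed reflection.

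Granting this, the rest is short linear algebra. Each of $A$, $B$, $C$ is orthogonal. Each is also symmetric, for example $A^T = (I^-)^T(I^+)^T = I^-I^+ = I^+I^-$. Each squares to $\id$, since $A^2 = (I^+)^2(I^-)^2 = \id$. Hence each is a diagonalizable involution with eigenvalues $\pm 1$. They pairwise commute by the commutation established above. From $IJK = -\id$ and $K^2=-\id$ we get $IJ = K$ for each structure. This gives
\begin{equation}
ABC = (I^+J^+K^+)(I^-J^-K^-) = (-\id)(-\id) = \id.
\end{equation}
So $A$, $B$, $C$ are simultaneously diagonalizable. The joint eigenspaces can only have sign patterns $(+,+,+)$, $(+,-,-)$, $(-,+,-)$, $(-,-,+)$; let their dimensions be $d_1, d_2, d_3, d_4$. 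The conditions $d_1+d_2+d_3+d_4 = 4$ and $\operatorname{tr}A = \operatorname{tr}B = \operatorname{tr}C = 0$ form an invertible linear system, whose solution is $d_1 = d_2 = d_3 = d_4 = 1$.

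In particular the joint $(+1)$-eigenspace is one-dimensional. Any nonzero $v$ in it lies in $\ker(I^-+I^+) \cap \ker(J^-+J^+) \cap \ker(K^-+K^+)$, which proves the lemma. As a byproduct, the intersection is exactly $1$-dimensional.
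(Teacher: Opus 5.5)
Your proof is correct, and it takes a genuinely different route from the paper's.

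The paper argues computationally. It invokes \thref{oppositeCircles} to obtain a unit vector $p$ with $I^-p=-I^+p$. It then observes that the change-of-basis matrix between the adapted frames $(p,I^+p,J^+p,K^+p)$ and $(p,I^-p,J^-p,K^-p)$ must be $\operatorname{diag}(1,-1)$ followed by a $2\times 2$ rotation block. Finally it writes out $QL_i+L_iQ$, $QL_j+L_jQ$, $QL_k+L_kQ$ and exhibits an explicit common kernel vector in $\lspan(e_1,e_2)$.

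You instead recast the three kernels as the $(+1)$-eigenspaces of $A=I^+I^-$, $B=J^+J^-$, $C=K^+K^-$. The key input is that positive and negative orthogonal complex structures on $\R^4$ lie in two mutually commuting $3$-dimensional spaces of skew-symmetric matrices: left versus right quaternion multiplication, equivalently $E_+$ versus $E_-$. From this, $A,B,C$ are commuting symmetric involutions with $AB=C$ and trace zero, and the dimension count finishes the proof.

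Your sketch of the commutation step holds up:
\begin{itemize}
\item The conjugating $T$ automatically has $\det T>0$, because conjugating by an orientation-reversing map flips the sign of a structure. So $E_\pm$ are preserved.
\item A direct check shows that $R_i,R_j,R_k$ give exactly the anti-self-dual forms listed in \thref{eigenspaces}, so your hedge about an orientation-reversing conjugate is unnecessary.
\item The trace-zero claim follows correctly from $E_+\perp E_-$, since the Frobenius pairing of skew matrices is twice the paper's inner product on $\Lambda^2\R^4$.
\end{itemize}

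Your route is independent of \thref{oppositeCircles}, and hence of the matrix computation in \thref{base_case}. It makes the exact one-dimensionality of the intersection explicit, and it explains why that holds. Its cost is that it rests on the left/right commutation fact, which the paper never states. The paper's proof, by contrast, stays entirely within the adapted-basis and change-of-basis machinery it had already built for \thref{base_case}.
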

\begin{proof}
    By \thref{sign_quat_to_comp}, $I^-$ and $I^+$ are orthogonal complex structures of opposite sign on $\R^4$. So by \thref{oppositeCircles} there exists some $p \in S^3$ for which $I^-(p) + I^+(p) = 0$. 
    
    Let $\mathcal{E} = (p, I^+p, J^+p, K^+p)$ and $\mathcal{F} = (p, I^-p, J^-p, K^-p)$ be ordered bases for $\R^4$, and let $Q$ be the change of basis matrix from $\mathcal{E}$ to $\mathcal{F}$. Since $p = p$ and $I^-p = -I^+p$, we find that $Q$ is of the form 
    \begin{equation}
        \begin{bmatrix}
            1 & 0 & a & b\\
            0 & -1 & c & d \\
            0 & 0 & e & f \\
            0 & 0 & g & h
        \end{bmatrix}.
    \end{equation}
    Since $\mathcal{E}$ and $\mathcal{F}$ are both orthonormal but with opposite orientation, $Q$ must be orthogonal with negative determinant, from which we deduce that 
    \begin{equation}
        a = b = c = d = 0 \qquad \text{and} \qquad \begin{bmatrix}
        e & f \\ g & h
    \end{bmatrix} \in \operatorname{SO}(2).
    \end{equation}
    It follows that 
    \begin{equation}
        Q = \begin{bmatrix}
            1 & 0 & \\
            0 & -1 &  \\
             &  & c & -s \\
             &  & s & c
        \end{bmatrix}
    \end{equation}
    for some real numbers $c$ and $s$ satisfying $c^2 + s^2 = 1$.

    Our choice of $\mathcal{E}$ and $\mathcal{F}$ makes it so that
    \begin{enumerate}
        \item $(I^+, J^+, K^+)$ is the standard quaternionic structure in the basis $\mathcal{E}$.
        \item $(I^-, J^-, K^-)$ is the standard quaternionic structure in the basis $\mathcal{F}$.
    \end{enumerate}
    So writing everything in terms of the basis $\mathcal{E}$, we have 
    \begin{align}
        I^+ &= L_i, & J^+ &= L_j, & K^+ &= L_k, \\
        I^- &= QL_iQ^{-1}, & J^- &= QL_jQ^{-1}, & K^- &= QL_kQ^{-1}.
    \end{align}
    Our goal is now to show 
    \begin{equation}
        \ker(QL_iQ^{-1} + L_i) \cap \ker(QL_jQ^{-1} + L_j) \cap \ker(QL_kQ^{-1} + L_k) \neq \{0\},
    \end{equation}
    which is equivalent to 
    \begin{equation}
        \ker(QL_i + L_iQ) \cap \ker(QL_j + L_jQ) \cap \ker(QL_k + L_kQ) \neq \{0\}.
    \end{equation}
    We compute that

    \begin{align}
        \ker(QL_i + L_iQ) &= \ker\begin{bmatrix}
            0 & 0 \\
            0 & 0 \\
            & & -s & -c \\
            & & c & -s
        \end{bmatrix}, \\
        \ker(QL_j + L_jQ) &= \ker\begin{bmatrix}
            & & -1-c & s \\
            & & s & -1 + c \\
            1 + c & s \\
            s & 1-c
        \end{bmatrix}, \\
    \ker(QL_k + L_kQ) &= \ker\begin{bmatrix}
            & & -s & -1-c \\
            & & 1-c & s \\
            -s & -1+c \\
            1+c & s
        \end{bmatrix}.
    \end{align}
    It's easy to check via matrix multiplication that the vectors 
    \begin{equation}
        \begin{bmatrix}
        -s \\ 1+c \\ 0 \\ 0
    \end{bmatrix} \qquad \text{and} \qquad \begin{bmatrix}
        1-c \\ -s \\ 0 \\ 0
    \end{bmatrix}
    \end{equation}
    both lie in the intersection of all three kernels. No matter the choice of $c$ and $s$, at least one of these vectors is nonzero, and so the intersection of the kernels is nontrivial, as desired.
\end{proof}

\begin{theorem}\thlabel{S3_counterexample}
    Let $(I^+, J^+, K^+)$ be the standard quaternionic structure on $\R^8$ and let $(I^-, J^-, K^-) = (QI^+Q^{-1}, QJ^+Q^{-1}, QK^+Q^{-1})$, where 
    \begin{equation}
        Q = \begin{bmatrix}
            -1 & & & & & & & \\
            & 1 & & & & & & \\
            & & 1 & & & & & \\
            & & & 0 & -1 & & & \\
            & & & 1 & 0 & & & \\
            & & & & & 1 & & \\
            & & & & & & 1 & \\
            & & & & & & & 1
        \end{bmatrix}.
    \end{equation}
    Then $H_{(I^+, J^+, K^+)}$ and $H_{(I^-, J^-, K^-)}$ have no oriented 3-spheres in common. 
\end{theorem}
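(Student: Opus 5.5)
By Proposition~\ref{S7_agreement_criterion}, a shared oriented 3-sphere is the same thing as a 4-dimensional subspace $P \subseteq \R^8$ with $P = \lspan(p, I^+p, J^+p, K^+p) = \lspan(p, I^-p, J^-p, K^-p)$ for some unit $p \in P$, such that the two spanning frames induce the same orientation on $P$. I plan to argue by contradiction. I will first turn such a $P$ into a statement about kernels, using the same decomposition idea as in the proof of \thref{oppositeCircles}. I will then refute that statement by a direct computation with the explicit $Q$.

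\textbf{Step 1: signs.} Suppose such a $P$ exists. It is invariant under $I^\pm, J^\pm, K^\pm$. Since all six maps are orthogonal, $P^\perp$ is invariant under all six as well. Restricting gives orthogonal quaternionic structures on $P \cong \R^4$ and on $P^\perp \cong \R^4$. By \thref{sign_of_quat}, computed with the decomposition $\R^8 = P \oplus P^\perp$ and $e_1 = p$, the sign of each global structure is the product of its signs on $P$ and on $P^\perp$. The global signs are opposite, because $\det Q = (-1)\cdot 1 = -1$ (the $2\times 2$ block is a rotation). On $P$ the signs agree, since the fiber orientations agree. Hence $(I^+,J^+,K^+)|_{P^\perp}$ and $(I^-,J^-,K^-)|_{P^\perp}$ have opposite signs.

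\textbf{Step 2: a common kernel vector.} Applying \thref{detector} on $P^\perp$ produces a nonzero $v \in \R^8$ with
\begin{equation}
(I^- + I^+)v = (J^- + J^+)v = (K^- + K^+)v = 0 .
\end{equation}
So it suffices to show that for this particular $Q$,
\begin{equation}
\ker(QI^+ + I^+Q) \cap \ker(QJ^+ + J^+Q) \cap \ker(QK^+ + K^+Q) = \{0\}.
\end{equation}
This reformulation uses $Q^{-1} = Q^T$ and the substitution $w = Q^{-1}v$, exactly as in the proof of \thref{detector}.

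\textbf{Step 3: the computation, which I expect to be the main obstacle.} The matrices $I^+, J^+, K^+$ are block diagonal with blocks $L_i, L_j, L_k$. The matrix $Q$ acts as $\operatorname{diag}(-1,1,1)$ on coordinates $1,2,3$, as a $90^\circ$ rotation mixing coordinates $4$ and $5$, and as the identity on coordinates $6,7,8$. Each of the three matrices $QX + XQ$ is therefore sparse.

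I would first use $QI^+ + I^+Q$. Pairing $e_1$ with $e_2$ inside $L_i$, the sign flip on $e_1$ should make the corresponding block vanish, mirroring the first two rows of the $\R^4$ computation in \thref{detector}. The remaining rows should force most coordinates of $w$ to vanish, or to be tied linearly to one another. I would then substitute these relations into the $J$ and $K$ conditions. The coupling of coordinates $4$ and $5$ across the two quaternion blocks should leave only the zero solution.

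If the bookkeeping gets messy, a more conceptual fallback is available. Left quaternionic lines of $\H^2$ are $\{(x,0)\}$ and $\{(x, xm)\}$ for $m \in \H$. One could check directly that $Q$ maps none of these lines to itself with orientation preserved. This amounts to solving $Q(x, xm) = (y, ym)$ for all $x$, and should similarly lead to a contradiction.
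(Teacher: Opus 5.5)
Your Steps 1--2 are the paper's reduction. A shared oriented fiber $P$ makes $P^\perp$ invariant under all six maps, with restricted structures of opposite sign. \thref{detector} then gives a common kernel vector, and conjugating by $Q$ reduces the theorem to $\ker(QI^++I^+Q)\cap\ker(QJ^++J^+Q)\cap\ker(QK^++K^+Q)=\{0\}$. In Step 1 you use multiplicativity of the determinant in \thref{sign_of_quat} over $P\oplus P^\perp$, with fixed reference orientations on the two summands. The paper instead notes that equal orientations on $P^\perp$ would make it a second shared fiber, contradicting \thref{shared_S3_uniqueness}. Yours is just that theorem's proof inlined, and it is fine.

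The gap is Step 3. Everything before it holds for every opposite-sign pair on $\R^8$, so the entire content specific to this $Q$ is the triviality of that intersection. You only predict it (``should force'', ``should leave only the zero solution'') rather than checking it. The check is short, and your heuristics are right:
\begin{itemize}
\item $(QI^++I^+Q)e_1=Qe_2-I^+e_1=0$ and $(QI^++I^+Q)e_2=-Qe_1+I^+e_2=0$. Meanwhile $e_3,\dots,e_8$ are sent to $e_4+e_5$, $-e_3+e_6$, $e_3+e_6$, $e_4-e_5$, $2e_8$, $-2e_7$, which are independent. So the first kernel is exactly $\lspan\{e_1,e_2\}$.
\item $(QJ^++J^+Q)e_1=0$ and $(QJ^++J^+Q)e_2=-Qe_4+J^+e_2=-(e_4+e_5)$. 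This cuts the intersection down to $\lspan\{e_1\}$.
\item $(QK^++K^+Q)e_1=Qe_4-K^+e_1=e_5-e_4\neq0$. This is precisely the coupling of coordinates $4$ and $5$ you anticipated.
\end{itemize}
The paper computes all three kernels in full: $\lspan\{e_1,e_2\}$, $\lspan\{e_1,e_3\}$, and $\lspan\{e_1+e_8,\,e_4-e_5\}$.

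Your fallback would not work as stated, for two reasons.
\begin{itemize}
\item The fibers of the standard fibration are $\{(x,xm)\}$ for $m\in\H$ together with $\{(0,x)\}$. Your $\{(x,0)\}$ is just the case $m=0$, and $\{(0,x)\}$ is missing.
\item The fibers of $H_{(I^-,J^-,K^-)}$ are the images $Q(L')$ of standard fibers $L'$. So a common fiber is a line $L$ with $L=Q(L')$ for some line $L'$, and $L'$ need not equal $L$. Showing that $Q$ preserves no line does not exclude $Q$ carrying one line onto another.
\end{itemize}
To make the fallback work, you would have to solve $Q(\{(x,xm)\})=\{(y,yn)\}$ with $m$ and $n$ independent, handle the cases involving $\{(0,x)\}$, and check orientations.
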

\begin{proof}
    Suppose for contradiction that $H_{(I^+, J^+, K^+)}$ and $H_{(I^-, J^-, K^-)}$ did have an oriented 3-sphere in common. Let $P \in G_4^+(\R^8)$ be the oriented plane containing this 3-sphere, and let $p$ be a unit vector in $P$. By \thref{S7_agreement_criterion} we have
    \begin{equation}
        P = \lspan(p, I^+p, J^+p, K^+p) = \lspan(p, I^-p, J^-p, K^-p).
    \end{equation}
    In particular, $P$ is simultaneously $I^+$-, $J^+$-, $K^+$-, $I^-$-, $J^-$-, and $K^-$-invariant. Since all of these are orthogonal maps, we have that $P^\perp$ is also invariant under the action of these six maps. So letting $q$ be a unit vector in $P^\perp$, we have that
    \begin{equation}
        P^\perp = \lspan\{q, I^+q, J^+q, K^+q\} = \lspan\{q, I^-q, J^-q, K^-q\}
    \end{equation}
    as unoriented planes. However, it cannot be the case that 
    \begin{equation}
        \lspan(q, I^+q, J^+q, K^+q) = \lspan(q, I^-q, J^-q, K^-q)
    \end{equation}
    as oriented planes or else $H_{(I^+, J^+, K^+)}$ and $H_{(I^-, J^-, K^-)}$ would share both $P$ and $P^\perp$, contradicting \thref{shared_S3_uniqueness}. It follows that the bases $(q, I^+q, J^+q, K^+q)$ and $(q, I^-q, J^-q, K^-q)$ have opposite orientation within $P^\perp$, and therefore $(I^+, J^+, K^+)|_{P^\perp}$ and $(I^-, J^-, K^-)|_{P^\perp}$ are orthogonal quaternionic structures on $P^\perp \cong \R^4$ of opposite sign. By \thref{detector}, we should have 
    \begin{equation}
        \ker(I^- + I^+) \cap \ker(J^- + J^+) \cap \ker(K^- + K^+) \neq \{0\},
    \end{equation}
    which is equivalent to 
    \begin{equation}
        \ker(QI^+ + I^+Q) \cap \ker(QJ^+ + J^+Q) \cap \ker(QK^+ + K^+Q) \neq \{0\}.
    \end{equation}
    But we can check that this is not true: 
    \begin{equation}
        QI^+ + I^+Q = \begin{bmatrix}
            0 & 0 \\
            0 & 0 \\
            & & 0 & -1 & 1 & 0 \\
            & & 1 & 0 & 0 & 1\\
            & & 1 & 0 & 0 & -1 \\
            & & 0 & 1 & 1 & 0 \\
            & & & & & & 0 & -2 \\
            & & & & & & 2 & 0
        \end{bmatrix},
    \end{equation}
    which has kernel $\lspan\{e_1, e_2\}$,
    \begin{equation}
        QJ^+ + J^+Q = \begin{bmatrix}
            & & & 0 & 0 \\
            & & & 1 & -1 \\
            & & & 0 & 0 \\
            0 & -1 & 0 &0&0&0&1&0 \\
            0&-1&0&0&0&0&-1&0 \\
            & & & 0 & 0 & & & 2 \\
            & & & 1 & 1 \\
            & & & 0 & 0 & -2
        \end{bmatrix},
    \end{equation}
    which has kernel $\lspan\{e_1, e_3\}$, and 
    \begin{equation}
        QK^+ + K^+Q = \begin{bmatrix}
            0 & 0 & 0 & 1 & 1\\
            0 & 0 & -2 & 0 \\
            0 & 2 & 0 & 0 \\
            -1 & 0 & 0 & 0 & & & & 1\\
            1 & & & & 0 & 0 & 0 & -1 \\
            & & & & 0 & 0 & -2 & 0 \\
            & & & & 0 & 2 & 0 & 0 \\
            & & & 1 & 1 & 0 & 0 & 0
        \end{bmatrix},
    \end{equation}
    which has kernel $\lspan\{ e_1 + e_8, \; e_4 - e_5 \}$.

    The intersection of these three kernels is $\{0\}$, and so our original assumption that $H_{(I^+, J^+, K^+)}$ and $H_{(I^-, J^-, K^-)}$ share an oriented 3-sphere must have been false.
\end{proof}

\appendix\section{Appendix}
Here we give proofs for some standard results used throughout the paper.
\begin{proposition}\thlabel{basis_of_exterior}
    Let $\mathcal{E} = \{e_1, e_2, e_3, e_4\}$ be an orthonormal basis for $\R^4$. Then the set 
    \begin{equation}
        \{e_i^*\wedge e_j^* \mid 1 \leq i < j \leq 4 \} = \{\omega_{\lspan(e_i,e_j)} \mid 1 \leq i < j \leq 4 \}
    \end{equation}
    is a basis for $\Lambda^2\R^4$. In particular, $\im(\omega)$ spans $\Lambda^2\R^4$.
\end{proposition}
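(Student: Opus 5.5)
The proof splits into two claims: the six forms $e_i^*\wedge e_j^*$ with $i<j$ span $\Lambda^2\R^4$, and they are linearly independent. The identification $e_i^*\wedge e_j^* = \omega_{\lspan(e_i,e_j)}$ holds directly by \thref{omega}, since $(e_i,e_j)$ is a positively oriented orthonormal basis of the oriented plane $\lspan(e_i,e_j)$.

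For spanning, take an arbitrary alternating bilinear map $\alpha:\R^4\times\R^4\to\R$. Set $a_{ij}=\alpha(e_i,e_j)$ for $i<j$ and compare $\alpha$ with $\beta=\sum_{i<j} a_{ij}\, e_i^*\wedge e_j^*$. By the determinant formula for the wedge product,
\[
[e_i^*\wedge e_j^*](e_k,e_l)=\delta_{ik}\delta_{jl}-\delta_{il}\delta_{jk}.
\]
For $k<l$ this gives $\beta(e_k,e_l)=a_{kl}=\alpha(e_k,e_l)$. Both $\alpha$ and $\beta$ are alternating, so they also agree on pairs with $k>l$ and on pairs with $k=l$, where both vanish. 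Bilinearity then gives $\alpha=\beta$.

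For linear independence, suppose $\sum_{i<j} a_{ij}\, e_i^*\wedge e_j^*=0$. Evaluating both sides on $(e_k,e_l)$ with $k<l$ and using the same delta identity isolates $a_{kl}=0$. So the six forms are a basis, and $\dim\Lambda^2\R^4=6$.

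The final claim follows because every basis element lies in $\im(\omega)$, so $\im(\omega)$ spans $\Lambda^2\R^4$. Nothing here is hard. The only step needing care is the bookkeeping that reduces equality of bilinear forms to agreement on basis pairs with $k<l$, which uses that both forms are alternating.
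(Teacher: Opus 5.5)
Your proof is correct and follows the paper's argument. Linear independence comes from evaluating on basis pairs $(e_k,e_l)$ with $k<l$, and spanning comes from checking that $\alpha$ and $\sum_{i<j}\alpha(e_i,e_j)\,e_i^*\wedge e_j^*$ agree on $\mathcal{E}\times\mathcal{E}$ and then using bilinearity; you are just slightly more explicit than the paper about how the alternating property handles pairs with $k\ge l$ and why $e_i^*\wedge e_j^*=\omega_{\lspan(e_i,e_j)}$.
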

\begin{proof}
    Our set is linearly independent because 
    \begin{align}
        \sum_{i < j} c_{i,j} [e_i^* \wedge e_j^*] = 0 &\implies \sum_{i < j} c_{i,j}  [e_i^* \wedge e_j^*](e_\ell, e_k) = 0 \quad \text{for all $\ell < k$} \\
        &\implies c_{\ell, k} = 0 \quad \text{for all $\ell<k$.}
    \end{align}
    Our set spans $\Lambda^2\R^4$ since for all $\alpha \in \Lambda^2\R^4$ we have
    \begin{equation}
        \alpha = \sum_{i < j} \alpha(e_i, e_j) [e_i^* \wedge e_j^*].
    \end{equation}
    To see why, observe that the left and right sides agree on all inputs in $\mathcal{E} \times \mathcal{E}$. By multilinearity, they agree on all inputs in $\R^4 \times \R^4$, and so they are equal.
\end{proof}
\subsection{Existence, uniqueness of the inner product on $\Lambda^2\R^4$}\label{induced_inner_product}
Here we prove that there is exactly one inner product on $\Lambda^2\R^4$ which satisfies the property given in \thref{inner_product}. Though we will only prove and use it for $\Lambda^2\R^4$, this easily generalizes to $\Lambda^k \R^n$.
\begin{lemma}[Cauchy-Binet formula]\thlabel{cauchy_binet}
    Let $A = [a_{i,j}]$ and $B = [b_{i,j}]$ be $4 \times 2$ matrices. Given $R \subseteq \{1,2,3,4\}$ and $S\subseteq\{1,2\}$, we write $A_{R\times S}$ to denote the submatrix of $A$ consisting of all $a_{i,j}$ for which $i \in R$ and $j \in S$. Then
    \begin{equation}
        \det(B^TA) = \sum_{k < \ell} \det(A_{\{k,\ell\} \times \{1,2\} }) \det(B_{\{ k,\ell \} \times \{1,2\}}).
    \end{equation}
\end{lemma}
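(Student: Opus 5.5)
We prove the formula by expanding $\det(B^TA)$ multilinearly in the columns. Write $a_1, a_2 \in \R^4$ for the columns of $A$ and $b_1, b_2$ for the columns of $B$, so that $B^TA$ is the $2\times 2$ matrix with entries $\langle b_i, a_j\rangle$. Then
\begin{equation}
    \det(B^TA) = \langle b_1, a_1\rangle\langle b_2, a_2\rangle - \langle b_1, a_2\rangle\langle b_2, a_1\rangle .
\end{equation}
Expanding each inner product as $\sum_k b_{k,i} a_{k,j}$ turns this into a double sum over row indices:
\begin{equation}
    \det(B^TA) = \sum_{k,\ell=1}^4 \bigl( b_{k,1} b_{\ell,2} \bigr)\bigl( a_{k,1}a_{\ell,2} - a_{\ell,1}a_{k,2} \bigr),
\end{equation}
obtained by relabeling the summation indices in the second product.

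Next we group the terms of the double sum by the unordered pair $\{k,\ell\}$. The bracket $a_{k,1}a_{\ell,2} - a_{\ell,1}a_{k,2}$ equals $\det(A_{\{k,\ell\}\times\{1,2\}})$ when $k<\ell$. It is antisymmetric under swapping $k$ and $\ell$, and it vanishes when $k=\ell$. Pairing the $(k,\ell)$ and $(\ell,k)$ terms for $k<\ell$ therefore gives
\begin{equation}
    \det(A_{\{k,\ell\}\times\{1,2\}})\bigl(b_{k,1}b_{\ell,2} - b_{\ell,1}b_{k,2}\bigr) = \det(A_{\{k,\ell\}\times\{1,2\}})\det(B_{\{k,\ell\}\times\{1,2\}}).
\end{equation}
Summing over $k<\ell$ yields exactly the claimed formula.

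There is no real obstacle here. The only care needed is in the index bookkeeping: renaming the dummy indices in the second product so that both products share the common factor $b_{k,1}b_{\ell,2}$, and correctly identifying the antisymmetrized $A$-factor with a minor. As an alternative, if one prefers a conceptual check, both sides are bilinear in $(a_1,a_2)$ and alternating, and likewise in $(b_1,b_2)$. It therefore suffices to verify the identity when the columns are standard basis vectors $a_j = e_{r_j}$, $b_j = e_{s_j}$, which is a finite case check. The direct expansion is shorter, so we use that.
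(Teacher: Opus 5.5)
Your proof is correct and is essentially the paper's argument: both expand $\det(B^TA)$ as a double sum over row indices $k,\ell$, and both use that the diagonal terms vanish together with the $k\leftrightarrow\ell$ antisymmetry. The only difference is organizational: you factor out the $A$-minor and pair $(k,\ell)$ with $(\ell,k)$ to reach the right-hand side directly, whereas the paper expands both sides separately into the same sum over $k\neq\ell$ and compares them.
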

\begin{proof}
    We compute that 
    \begin{equation}
        B^TA = \begin{bmatrix}
            \sum_k a_{k,1} b_{k,1} & \sum_k a_{k,2} b_{k,1} \\
            \sum_k a_{k,1} b_{k,2} & \sum_k a_{k,2} b_{k,2}
        \end{bmatrix}.
    \end{equation}
    Therefore, 
    \begin{align}
        \det(B^TA) &= \textstyle \Bigl( \sum_k a_{k,1} b_{k,1} \Bigr) \Bigl(\sum_\ell a_{\ell, 2}b_{\ell, 2} \Bigr) - \Bigl( \sum_{k} a_{k,1}b_{k,2} \Bigr) \Bigl( \sum_{\ell} a_{\ell,2}b_{\ell,1} \Bigr) \\
        &= \sum_{k, \ell} \underbrace{\big[ a_{k,1}a_{\ell,2}b_{k,1}b_{\ell,2} - a_{k,1} a_{\ell,2} b_{k,2}b_{\ell,1} \big]}_{\text{equals $0$ whenever $k = \ell$}} \\
        &= \sum_{k \neq \ell} \big[ a_{k,1}a_{\ell,2}b_{k,1}b_{\ell,2} - a_{k,1} a_{\ell,2} b_{k,2} b_{\ell,1} \big] \label{first_c}
    \end{align}
    On the other hand, 
    \begin{align}
        &\mathrel{\phantom{=}} \sum_{k < \ell} \det(A_{\{k,\ell\} \times \{1,2\} }) \det(B_{\{ k,\ell \} \times \{1,2\}}) \\ 
        &= \sum_{k < \ell} \bigl( a_{k,1} a_{\ell,2} - a_{k,2} a_{\ell,1} \bigr) \bigl( b_{k,1} b_{\ell,2} - b_{k,2} b_{\ell,1} \bigr) \\
        &= \sum_{k < \ell} \bigl[a_{k,1} a_{\ell,2} b_{k,1} b_{\ell,2} - a_{k,1} a_{\ell,2} b_{k,2} b_{\ell,1} \bigr] + \underbrace{\sum_{k < \ell} \bigl[ a_{k,2}a_{\ell,1} b_{k,2} b_{\ell,1} - a_{k,2} a_{\ell,1} b_{k,1} b_{\ell,2}\bigr]}_{= \sum_{\ell < k} [ a_{\ell,2}a_{k,1} b_{\ell,2} b_{k,1} -  a_{\ell,2} a_{k,1} b_{\ell,1} b_{k,2}]} \\
        &= \sum_{k \neq \ell} \big[ a_{k,1}a_{\ell,2}b_{k,1}b_{\ell,2} - a_{k,1} a_{\ell,2} b_{k,2}b_{\ell,1} \big]. \label{second_c}
    \end{align}
    \eqref{first_c} and \eqref{second_c} are equal, and the result follows.
\end{proof}

\begin{proposition}\thlabel{inner_product_existence}
    There exists an inner product on $\Lambda^2\R^4$ such that for all $P,Q \in G_2^+(\R^4)$ we have $\langle \omega_P, \omega_Q\rangle$ equals the determinant of the orthogonal projection map $P\to Q$.
\end{proposition}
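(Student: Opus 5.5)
We define the inner product explicitly in a basis and then verify the property using the Cauchy--Binet formula (\thref{cauchy_binet}). Fix the standard orthonormal basis $\{e_1,e_2,e_3,e_4\}$ of $\R^4$. By Proposition~\ref{basis_of_exterior}, the six forms $e_k^*\wedge e_\ell^*$ ($k<\ell$) form a basis of $\Lambda^2\R^4$, and every $\alpha$ has coordinates $\alpha(e_k,e_\ell)$. We declare this basis orthonormal:
\begin{equation}
    \langle \alpha, \beta\rangle = \sum_{k<\ell} \alpha(e_k,e_\ell)\,\beta(e_k,e_\ell).
\end{equation}
This is a bilinear, symmetric, positive-definite form, since it is the standard dot product in these coordinates. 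Hence it is an inner product.

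Next we check the projection property. Let $(u,v)$ be a positive orthonormal basis of $P$ and $(x,y)$ one of $Q$. Let $A=[u\ v]$ and $B=[x\ y]$ be the corresponding $4\times 2$ matrices. Since $\omega_P = u^*\wedge v^*$, we have
\begin{equation}
    \omega_P(e_k,e_\ell)=u_kv_\ell-u_\ell v_k=\det(A_{\{k,\ell\}\times\{1,2\}}),
\end{equation}
and similarly $\omega_Q(e_k,e_\ell)=\det(B_{\{k,\ell\}\times\{1,2\}})$. So \thref{cauchy_binet} gives $\langle\omega_P,\omega_Q\rangle=\det(B^TA)$.

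It remains to identify $B^TA$ with the matrix of the orthogonal projection $P\to Q$. Because $(x,y)$ is orthonormal, the projection of $w$ onto $Q$ is $\langle w,x\rangle x+\langle w,y\rangle y$. So in the bases $(u,v)$ and $(x,y)$ the projection has matrix
\begin{equation}
    \begin{bmatrix}\langle u,x\rangle & \langle v,x\rangle\\ \langle u,y\rangle & \langle v,y\rangle\end{bmatrix}=B^TA.
\end{equation}
The determinant of a map between two oriented 2-planes is computed with respect to positive orthonormal bases. It does not depend on which such bases are chosen, because changing to another positive orthonormal basis multiplies the matrix by an element of $SO(2)$, which has determinant $1$. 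Therefore $\langle\omega_P,\omega_Q\rangle$ equals the determinant of the projection, as required.

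The one point needing care is this well-definedness: the same $SO(2)$ argument also shows that $\omega_P$ itself is independent of the chosen basis, so both sides of the identity are intrinsic. Everything else is a direct substitution into Cauchy--Binet.
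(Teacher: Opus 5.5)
Your proposal is correct and follows the paper's proof essentially step for step. Both declare $\{e_k^*\wedge e_\ell^*\}_{k<\ell}$ orthonormal, write the projection $P\to Q$ in positive orthonormal bases as $B^TA$, express $\langle\omega_P,\omega_Q\rangle$ as a sum of products of $2\times 2$ minors of $A$ and $B$, and conclude by Cauchy--Binet. Your added remark that the $SO(2)$ change of basis makes both sides basis-independent is a harmless extra that the paper leaves implicit.
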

\begin{proof}
    Let $\{e_1, e_2, e_3, e_4\} \subseteq \R^4$ be the standard basis, and let $\{ e_1^*, e_2^*, e_3^*, e_4^* \}\subseteq \Lambda^1\R^4$ be the corresponding dual basis. Consider the inner product on $\Lambda^2\R^4$ defined by declaring the basis 
    \begin{equation}
        \{ e_1^* \wedge e_2^*, \; e_1^* \wedge e_3^*, \; e_1^* \wedge e_4^*, \; e_2^* \wedge e_3^*, \; e_2^* \wedge e_4^*, \; e_3^* \wedge e_4^* \} \subseteq \Lambda^2\R^4
    \end{equation}
    given by Proposition~\ref{basis_of_exterior} to be orthonormal. Let $P, Q \subseteq \R^4$ be a pair of oriented 2-planes, and let $\pi: P \to Q$ be the orthogonal projection map. Our goal is to show that $\langle \omega_P, \omega_Q \rangle = \det(\pi)$.
    
    Let $p_1, p_2$ be a positively oriented orthonormal basis for $P$, and let $A$ be the $4 \times 2$ matrix whose columns are $p_1$ and $p_2$. Similarly, let $q_1, q_2$ be a positively oriented orthonormal basis for $Q$, and let $B$ be the $4 \times 2$ matrix whose columns are $q_1$ and $q_2$. In other words, 
    \begin{equation}
        A = \begin{bmatrix}
            e_1^*(p_1) & e_1^*(p_2) \\
            e_2^*(p_1) & e_2^*(p_2) \\
            e_3^*(p_1) & e_3^*(p_2) \\
            e_4^*(p_1) & e_4^*(p_2)
        \end{bmatrix}, \qquad B = \begin{bmatrix}
            e_1^*(q_1) & e_1^*(q_2) \\
            e_2^*(q_1) & e_2^*(q_2) \\
            e_3^*(q_1) & e_3^*(q_2) \\
            e_4^*(q_1) & e_4^*(q_2)
        \end{bmatrix}.
    \end{equation}
    Since $\pi(p_1) = \langle p_1, q_1\rangle q_1 + \langle p_1, q_2 \rangle q_2$ and $\pi(p_2) = \langle p_2, q_1\rangle q_1 + \langle p_2, q_2 \rangle q_2$, we find that $\pi$ has matrix representation
    \begin{equation}
        \begin{bmatrix}
            \langle p_1, q_1 \rangle & \langle p_2, q_1\rangle \\
            \langle p_1, q_2 \rangle & \langle p_2, q_2 \rangle
        \end{bmatrix},
    \end{equation}
    which equals $B^TA$. So
    \begin{equation}\label{bta}
        \det(\pi) = \det(B^TA). 
    \end{equation}
    On the other hand, we have 
    \begin{align}
        \langle \omega_P, \omega_Q\rangle &= \langle p_1^* \wedge p_2^*, \; q_1^* \wedge q_2^* \rangle \\
        &= \sum_{k < \ell} \Bigl( [p_1^* \wedge p_2^*](e_k, e_\ell) \Bigr) \Bigl( [q_1^* \wedge q_2^*](e_k, e_\ell) \Bigr) \\
        &= \sum_{k < \ell}\Bigl( p_1^*(e_k) p_2^*(e_\ell) - p_1^*(e_\ell) p_2^*(e_k) \Bigr)\Bigl( q_1^*(e_k) q_2^*(e_\ell) - q_1^*(e_\ell) q_2^*(e_k) \Bigr) \\
        &= \sum_{k < \ell} \det(A_{\{k,\ell\} \times \{ 1,2 \}}) \det(B_{\{k, \ell\} \times \{1, 2\}}). \label{second}
    \end{align}
    Applying Lemma~\ref{cauchy_binet} to equations \eqref{bta} and \eqref{second}, we get $\langle \omega_P, \omega_Q \rangle = \det(\pi)$, as desired.
\end{proof}
\begin{proposition}
    The inner product given by Proposition \textnormal{\ref{inner_product_existence}} is unique.
\end{proposition}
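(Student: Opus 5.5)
The idea is that an inner product is a bilinear form, so it is determined by its values on any spanning set. By Proposition~\ref{basis_of_exterior}, $\im(\omega)$ spans $\Lambda^2\R^4$. The defining property in \thref{inner_product} pins down $\langle \omega_P, \omega_Q\rangle$ for every pair $P,Q \in G_2^+(\R^4)$, so any two inner products with that property agree on all pairs of spanning vectors. Bilinearity should then force them to agree everywhere.

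Concretely, I will suppose $\langle\cdot,\cdot\rangle_1$ and $\langle\cdot,\cdot\rangle_2$ are two inner products on $\Lambda^2\R^4$. Each is assumed to satisfy $\langle \omega_P,\omega_Q\rangle_i = \det(\pi_{P\to Q})$ for all oriented planes $P,Q$. Fix the standard basis $\{e_1,\dots,e_4\}$; by Proposition~\ref{basis_of_exterior}, the six vectors $e_i^*\wedge e_j^* = \omega_{\lspan(e_i,e_j)}$ with $i<j$ form a basis of $\Lambda^2\R^4$. For any two basis elements $\omega_{\lspan(e_i,e_j)}$ and $\omega_{\lspan(e_k,e_\ell)}$, both inner products equal the determinant of the projection $\lspan(e_i,e_j)\to\lspan(e_k,e_\ell)$. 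Hence the two Gram matrices with respect to this basis coincide.

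Next I take arbitrary $\alpha = \sum a_{ij}\, e_i^*\wedge e_j^*$ and $\beta = \sum b_{k\ell}\, e_k^*\wedge e_\ell^*$. Expanding by bilinearity gives
\begin{equation}
\langle \alpha,\beta\rangle_1 = \sum a_{ij}b_{k\ell}\langle e_i^*\wedge e_j^*, e_k^*\wedge e_\ell^*\rangle_1 = \sum a_{ij}b_{k\ell}\langle e_i^*\wedge e_j^*, e_k^*\wedge e_\ell^*\rangle_2 = \langle\alpha,\beta\rangle_2 .
\end{equation}
This gives uniqueness.

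There is essentially no obstacle. The only point needing care is that the property is a statement about $\omega_P$ for planes, not about arbitrary elements of $\Lambda^2\R^4$, and this is exactly what the spanning statement in Proposition~\ref{basis_of_exterior} bridges. As a sanity check one could also compute the Gram matrix explicitly. Projecting $\lspan(e_i,e_j)$ onto $\lspan(e_k,e_\ell)$ has determinant $1$ if $\{i,j\}=\{k,\ell\}$ and $0$ otherwise, since some basis vector is sent to $0$. So the basis is orthonormal, recovering the inner product constructed in Proposition~\ref{inner_product_existence}.
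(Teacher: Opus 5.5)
Your proof is correct and is the same argument the paper gives: by Proposition~\ref{basis_of_exterior}, $\im(\omega)$ contains the basis $\{e_i^*\wedge e_j^*\}$, so the defining property fixes the Gram matrix on that basis, and bilinearity then determines the inner product everywhere. Your explicit expansion and the orthonormality check just spell out what the paper states in one line.
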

\begin{proof}
    Recall that to specify an inner product on a vector space, it's enough to specify it on a basis. We know how the inner product behaves on vectors in $\im(\omega)$, so to prove uniqueness, it suffices to show that $\im(\omega)$ contains a basis for $\Lambda^2\R^4$. This is true by Proposition~\ref{basis_of_exterior}.
\end{proof}

\subsection{Existence, uniqueness of the Hodge star operator}\label{hodge_star_existence_uniqueness}

Here we prove that there is exactly one linear map $*: \Lambda^2\R^4 \to \Lambda^2 \R^4$ which satisfies the property given in \thref{hodge_star}. Though we will only prove it for maps $\Lambda^2 \R^4 \to \Lambda^2 \R^4$, this easily generalizes to maps $\Lambda^k\R^n \to \Lambda^{n-k} \R^n$.

First, we set up some notation:
\begin{itemize}
    \item Let $[4]$ denote the set $\{1,2,3,4\}$.
    \item Let $S_4$ denote the set of bijections $[4] \to [4]$. 
    \item Given $\sigma \in S_4$, let $\sgn(\sigma)$ denote the sign of the permutation $\sigma$.
    \item Let $\binom{[4]}{2}$ denote the set of subsets of $[4]$ of cardinality 2. 
    \item Given $I \subseteq [4]$, let $\sgn_{[4]}(I)$ denote the sign of the permutation given by listing the elements of $I$ in increasing order, followed by the elements of $[4] \setminus I$ in increasing order.
    \item Given vectors $v_1, v_2, v_3, \ldots$ and a set of natural numbers $I = \{ i_1, \cdots, i_k \}$ with $i_1 < \cdots < i_k$, let $v_I$ denote the tuple $(v_{i_1}, \cdots, v_{i_k})$.
\end{itemize}
Recall that the wedge product $\wedge: \Lambda^2\R^4 \times \Lambda^2\R^4 \to \Lambda^4\R^4$ is defined by 
\begin{align}
    [\alpha \wedge \beta](v_1, v_2, v_3, v_4) &= \frac{1}{2!2!} \sum_{\sigma \in S_4} \sgn(\sigma) \, \alpha(v_{\sigma(1)}, v_{\sigma(2)}) \beta(v_{\sigma(3)}, v_{\sigma(4)}) \\
    &= \sum_{I \in \binom{[4]}{2}} \sgn_{[4]}(I) \, \alpha(v_I) \beta(v_{[4] \setminus I}).
\end{align}
\begin{lemma}\thlabel{volume_form}
    The multilinear map $(\R^4)^4\to \Lambda^4\R^4$ given by 
    \begin{equation}
        (p_1, p_2, p_3, p_4) \mapsto (p_1^* \wedge p_2^*) \wedge(p_3^* \wedge p_4^*)
    \end{equation}
    is alternating. Furthermore, if $(p_1, p_2, p_3, p_4)$ is a positively oriented orthonormal basis for $\R^4$, then 
    \begin{equation}
        (p_1^* \wedge p_2^*) \wedge (p_3^* \wedge p_4^*) = \operatorname{Vol},
    \end{equation}
    where $\operatorname{Vol} \in \Lambda^4\R^4$ is the function which maps a 4-tuple of vectors in $\R^4$ to the signed volume of the parallelepiped spanned by those vectors. 
\end{lemma}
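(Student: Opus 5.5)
The plan has two parts: first show that the map $\Phi(p_1,p_2,p_3,p_4) = (p_1^*\wedge p_2^*)\wedge(p_3^*\wedge p_4^*)$ is alternating, then evaluate it on a positively oriented orthonormal basis and compare with $\operatorname{Vol}$.

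\textbf{Multilinearity.} The assignment $p \mapsto p^*$ is linear. The wedge $\Lambda^1\times\Lambda^1\to\Lambda^2$ is bilinear, and the wedge $\Lambda^2\times\Lambda^2\to\Lambda^4$ is bilinear by its defining formula. So $\Phi$ is linear in each slot.

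\textbf{Alternating.} It suffices to show that $\Phi$ vanishes whenever two of its arguments coincide. I will combine the explicit formula
\begin{equation}
[\alpha\wedge\beta](v_1,\dots,v_4)=\sum_{I\in\binom{[4]}{2}}\sgn_{[4]}(I)\,\alpha(v_I)\beta(v_{[4]\setminus I})
\end{equation}
with $[p_1^*\wedge p_2^*](a,b)=p_1^*(a)p_2^*(b)-p_1^*(b)p_2^*(a)$. Expanding, I expect
\begin{equation}
\Phi(p_1,\dots,p_4)(v_1,\dots,v_4)=\sum_{\sigma\in S_4}\sgn(\sigma)\prod_{i=1}^4 p_i^*(v_{\sigma(i)})=\det\bigl[\langle v_j,p_i\rangle\bigr]_{i,j}.
\end{equation}
The key step is checking that the sign $\sgn_{[4]}(I)$, combined with the two signs from the inner $2\times 2$ determinants, gives exactly $\sgn(\sigma)$. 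The $2\cdot 2\cdot 6=24$ terms should match the 24 permutations bijectively. This sign bookkeeping is the main obstacle; I would verify it by writing $\sigma$ as the shuffle determined by $I$, followed by transpositions within $I$ and within its complement.

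Once this formula is in hand, $\Phi(p_1,\dots,p_4)(v_1,\dots,v_4)$ is the determinant of a matrix whose rows are indexed by the $p_i$. It is therefore alternating in the $p_i$, which proves the first claim. (It is also alternating in the $v_j$, confirming that $\Phi$ does land in $\Lambda^4\R^4$.)

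\textbf{Value on a positive orthonormal basis.} Let $(p_1,\dots,p_4)$ be a positively oriented orthonormal basis and let $M$ be the orthogonal matrix with columns $p_i$. Then
\begin{equation}
\det\bigl[\langle v_j,p_i\rangle\bigr]=\det(M^TV)=\det(M)\det(V)=\det(V),
\end{equation}
where $V$ is the matrix with columns $v_j$, and $\det(M)=1$ because $M$ is orthogonal and positively oriented. Since $\det(V)$ is the signed volume of the parallelepiped spanned by the $v_j$, we get $\Phi(p_1,\dots,p_4)=\operatorname{Vol}$.

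As a cross-check, one can instead evaluate both sides on the basis $(p_1,\dots,p_4)$ itself. Because both sides lie in the one-dimensional space $\Lambda^4\R^4$, agreement on this single input suffices. Only the term $I=\{1,2\}$ with $\sigma=\id$ survives, giving the value $1$, which matches $\operatorname{Vol}(p_1,\dots,p_4)=1$.
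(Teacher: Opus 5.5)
Your proposal is correct and follows essentially the same route as the paper. Both arguments expand the double wedge via the $\sgn_{[4]}(I)$ formula, match the combined signs with $\sgn(\sigma)$ to get $\det\bigl[p_i^*(v_j)\bigr]$, read off alternation in the $p_i$ from the rows, and identify this determinant with $\operatorname{Vol}$ for a positively oriented orthonormal basis; your $\det(M^TV)=\det(M)\det(V)$ computation simply spells out a step the paper asserts in one line.
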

\begin{proof}
    By definition, 
    \begin{align}
        &\mathrel{\phantom{=}} [(p_1^* \wedge p_2^*) \wedge (p_3^* \wedge p_4^*)](v_1, v_2, v_3, v_4) \\ 
        &= \sum_{I \in \binom{[4]}{2}} \sgn_{[4]}(I) \, [p_1^* \wedge p_2^*](v_I) [p_3^* \wedge p_4^*](v_{[4] \setminus I}) \\
        &= \sum_{I \in \binom{[4]}{2}} \sgn_{[4]}(I) \, \Bigl( \sum_{J \in \binom{I}{1}} \sgn_{I}(J) \, p_1^*(v_J)p_2^*(v_{I \setminus J}) \Bigr) \Bigl( \sum_{K \in \binom{[4] \setminus I}{1}} \sgn_{[4] \setminus I}(K) \, p_3^*(v_K)p_4^*(v_{[4] \setminus I \setminus K}) \Bigr) \\
        &= \sum_{I \in \binom{[4]}{2}} \sum_{J \in \binom{I}{1}} \sum_{K \in \binom{[4] \setminus I}{1}} \sgn_{[4]}(I) \sgn_{I}(J) \sgn_{[4] \setminus I}(K) \, p_1^*(v_J)p_2^*(v_{I \setminus J})p_3^*(v_K)p_4^*(v_{[4] \setminus I \setminus K}).
    \end{align}
    We observe that $\sgn_{[4]}(I) \sgn_{I}(J) \sgn_{[4] \setminus I}(K) = \sgn(\sigma)$, where $\sigma \in S_4$ is the permutation 
    \begin{align}
        1 &\mapsto J, \\
        2 &\mapsto I \setminus J, \\
        3 &\mapsto K, \\
        4 &\mapsto [4] \setminus I \setminus K,
    \end{align}
    so the above expression reduces to
    \begin{gather}
        \sum_{\sigma \in S_4} \sgn(\sigma) \, p_1^*(v_{\sigma(1)})p_2^*(v_{\sigma(2)})p_3^*(v_{\sigma(3)})p_4^*(v_{\sigma(4)}) \\
        = \det\begin{bmatrix}
            p_1^*(v_1) & p_1^*(v_2) & p_1^*(v_3) & p_1^*(v_4) \\
            p_2^*(v_1) & p_2^*(v_2) & p_2^*(v_3) & p_2^*(v_4) \\
            p_3^*(v_1) & p_3^*(v_2) & p_3^*(v_3) & p_3^*(v_4) \\
            p_4^*(v_1) & p_4^*(v_2) & p_4^*(v_3) & p_4^*(v_4)
            \end{bmatrix}.
    \end{gather}
    Swapping any two $p_i$ would cause the above determinant to switch sign, and so our map is alternating. And if $(p_1, p_2, p_3, p_4)$ is a positively oriented orthonormal basis for $\R^4$, the above determinant equals $\operatorname{Vol}(v_1, v_2, v_3, v_4)$.
\end{proof}

\begin{lemma}\thlabel{unique_star_char}
    Fix $\beta \in \Lambda^2\R^4$. Then there is at most one choice of $\gamma \in \Lambda^2\R^4$ satisfying 
    \begin{equation}
        \alpha \wedge \gamma = \langle \alpha, \beta \rangle \operatorname{Vol} \qquad \text{for all } \alpha \in \Lambda^2\R^4.
    \end{equation}
\end{lemma}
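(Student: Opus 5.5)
Suppose $\gamma_1$ and $\gamma_2$ both satisfy the condition. By linearity we reduce to a nondegeneracy statement. Subtracting the two identities gives $\alpha \wedge (\gamma_1 - \gamma_2) = 0$ for all $\alpha \in \Lambda^2\R^4$, since the right-hand sides $\langle \alpha, \beta\rangle \operatorname{Vol}$ agree. Here we use that $\wedge: \Lambda^2\R^4 \times \Lambda^2\R^4 \to \Lambda^4\R^4$ is bilinear, which is immediate from its defining formula as a sum of products $\alpha(v_I)\beta(v_{[4]\setminus I})$. So it suffices to show the following: if $\delta \in \Lambda^2\R^4$ satisfies $\alpha \wedge \delta = 0$ for every $\alpha$, then $\delta = 0$.

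To prove this, take the standard orthonormal basis $e_1,\dots,e_4$. By Proposition~\ref{basis_of_exterior}, write $\delta = \sum_{k<\ell} d_{k\ell}\, e_k^*\wedge e_\ell^*$. For each pair $k<\ell$, let $\{m,n\} = [4]\setminus\{k,\ell\}$ with $m<n$, and test against $\alpha = e_m^* \wedge e_n^*$. Expanding bilinearly, $\alpha \wedge \delta = \sum_{i<j} d_{ij}\,(e_m^*\wedge e_n^*)\wedge(e_i^*\wedge e_j^*)$. By \thref{volume_form}, the map $(p_1,p_2,p_3,p_4)\mapsto (p_1^*\wedge p_2^*)\wedge(p_3^*\wedge p_4^*)$ is alternating. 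So every term with $\{i,j\}\cap\{m,n\}\neq\emptyset$ vanishes, since it has a repeated argument. The only surviving term is $d_{k\ell}\,(e_m^*\wedge e_n^*)\wedge(e_k^*\wedge e_\ell^*)$. By the alternating property together with the second part of \thref{volume_form}, this equals $\pm d_{k\ell}\operatorname{Vol}$, because $(e_m,e_n,e_k,e_\ell)$ is a permutation of the standard positively oriented orthonormal basis. Since $\operatorname{Vol}\neq 0$ (it takes the value $1$ on the standard basis), we get $d_{k\ell}=0$. Running over all six pairs gives $\delta=0$, so $\gamma_1=\gamma_2$.

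The main point needing care is the claim that an alternating multilinear map kills tuples with a repeated vector, together with the passage from alternating in the $p_i$ to the sign $\pm$. Both follow directly from the determinant expression obtained in the proof of \thref{volume_form}. Everything else is routine linear algebra.
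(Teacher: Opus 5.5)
Your proof is correct and follows essentially the same route as the paper: expand in the basis $e_k^*\wedge e_\ell^*$ from Proposition~\ref{basis_of_exterior}, wedge with the complementary basis element, and use Lemma~\ref{volume_form} to see that only one term survives, as $\pm$ its coefficient times $\operatorname{Vol}$. The only difference is cosmetic: you first subtract two solutions to reduce to nondegeneracy of the wedge pairing, whereas the paper reads off each coefficient $c_{k,\ell}$ of $\gamma$ directly.
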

\begin{proof}
    Let $\alpha = e_i^* \wedge e_j^*$ with $i < j$. Writing $\gamma = \sum_{k < \ell} c_{k,\ell}[e_k^* \wedge e_\ell^*]$, we have by Lemma~\ref{volume_form} that wedging with $\alpha$ kills all but one of the terms in the sum. It follows that each $c_{k, \ell}$ is uniquely determined, and so $\gamma$ is uniquely determined.
\end{proof}

\begin{proposition}\thlabel{hodge_star_existence}
    There exists a linear map $*: \Lambda^2 \R^4 \to \Lambda^2\R^4$ which satisfies $*\omega_P = \omega_{P^\perp}$, where $P^\perp$ is the orthogonal complement of $P$ oriented so that if $(p_1, p_2)$ is a positive basis for $P$ and $(p_3, p_4)$ is a positive basis for $P^\perp$, then $(p_1, p_2, p_3, p_4)$ is a positive basis for $\R^4$.
\end{proposition}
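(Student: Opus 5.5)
The plan is to define $*$ implicitly through the characterization in \thref{unique_star_char}: for each $\beta \in \Lambda^2\R^4$, let $*\beta$ be the unique $\gamma$ satisfying $\alpha \wedge \gamma = \langle \alpha, \beta\rangle \operatorname{Vol}$ for all $\alpha$. I will check that such a $\gamma$ always exists, that the resulting map is linear, and that it sends $\omega_P$ to $\omega_{P^\perp}$.

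For existence and linearity, I will work in the orthonormal basis $\{e_i^*\wedge e_j^*\}_{i<j}$ of Proposition~\ref{basis_of_exterior} and the inner product of Proposition~\ref{inner_product_existence}. For a basis element $\beta = e_i^* \wedge e_j^*$, set $\gamma = \sgn_{[4]}(\{i,j\})\, e_k^*\wedge e_\ell^*$, where $\{k,\ell\} = [4]\setminus\{i,j\}$ and $k<\ell$. By \thref{volume_form} (the alternating property together with $(e_1^*\wedge e_2^*)\wedge(e_3^*\wedge e_4^*)=\operatorname{Vol}$), wedging $e_a^*\wedge e_b^*$ with $e_k^*\wedge e_\ell^*$ vanishes unless $\{a,b\} = \{i,j\}$. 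In that case it equals $\sgn_{[4]}(\{i,j\})\operatorname{Vol}$, so the defining identity holds on basis vectors $\alpha$, and hence for all $\alpha$ by bilinearity of $\wedge$ and $\langle\cdot,\cdot\rangle$. Extending linearly in $\beta$ gives a linear map $*$, and this map satisfies the identity for every $\beta$, again by bilinearity. Uniqueness from \thref{unique_star_char} guarantees that $*$ is well defined, i.e., independent of the choice of basis.

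The key step is showing $*\omega_P = \omega_{P^\perp}$. Choose a positively oriented orthonormal basis $(p_1,p_2,p_3,p_4)$ of $\R^4$ with $(p_1,p_2)$ a positive basis of $P$, so that $(p_3,p_4)$ is a positive basis of $P^\perp$ with the stated orientation. By \thref{unique_star_char} it suffices to check that $\alpha \wedge \omega_{P^\perp} = \langle \alpha, \omega_P\rangle \operatorname{Vol}$ for all $\alpha$. By Proposition~\ref{basis_of_exterior} applied to the orthonormal basis $(p_i)$, it is enough to verify this on $\alpha = p_a^*\wedge p_b^*$ with $a<b$.

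By \thref{volume_form}, the left side is $(p_a^*\wedge p_b^*)\wedge(p_3^*\wedge p_4^*)$. This is $\operatorname{Vol}$ when $\{a,b\}=\{1,2\}$ and $0$ otherwise, since the alternating property kills any repeated vector. For the right side, $\langle p_a^*\wedge p_b^*, p_1^*\wedge p_2^*\rangle$ is the determinant of the orthogonal projection $\lspan(p_a,p_b)\to P$. That determinant is $1$ if $\{a,b\}=\{1,2\}$ and $0$ otherwise, because the two planes then share a vector orthogonal to the other plane, or equivalently by \thref{nontrivial_intersection}-style reasoning. The two sides agree, which finishes the argument.

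The main obstacle is keeping track of sign and orientation conventions: the formula for the wedge of $2$-forms and the identification with $\operatorname{Vol}$ in \thref{volume_form}. Working in the adapted basis $(p_i)$ sidesteps the permutation-sign bookkeeping in that final check. The only place $\sgn_{[4]}$ appears is the existence step, where it is read off directly from \thref{volume_form}.
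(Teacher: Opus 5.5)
Your proposal is correct and follows essentially the same route as the paper's proof. You define $*$ on the basis $e_i^*\wedge e_j^*$ with the sign $\sgn_{[4]}(\{i,j\})$, verify $\alpha\wedge *\beta = \langle\alpha,\beta\rangle\operatorname{Vol}$ via \thref{volume_form}, check that $\omega_{P^\perp}$ satisfies the same identity for $\beta=\omega_P$ in an adapted positive orthonormal basis $(p_1,\dots,p_4)$, and conclude with \thref{unique_star_char}. One phrasing fix: the reason $\langle p_a^*\wedge p_b^*,\omega_P\rangle=0$ for $\{a,b\}\neq\{1,2\}$ is that $\lspan(p_a,p_b)$ contains $p_3$ or $p_4$, which projects to $0$ in $P$.
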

\begin{proof}
    Proposition~\ref{basis_of_exterior} tells us that $\{e_i^* \wedge e_j^* \mid 1 \leq i < j \leq 4 \}$ is a basis for $\Lambda^2\R^4$, so we may define $*$ to be the linear map sending each basis element $e_i^* \wedge e_j^*$ with $i < j$ to $\sgn_{[4]}(\{i,j\})[e_k^* \wedge e_\ell^*]$, where $\{i,j,k,\ell \} = [4]$ and $k < \ell$. 
    
    Fix $\alpha, \beta \in \{e_i^* \wedge e_j^* \mid 1 \leq i < j \leq 4 \}$. By Lemma~\ref{volume_form} we have
    \begin{equation}
        \alpha \wedge *\beta = \begin{cases}
            \operatorname{Vol} & \text{for $\alpha = \beta$,} \\
            0 & \text{for $\alpha \neq \beta$,}
        \end{cases}
    \end{equation}
    and by \thref{inner_product} we have 
    \begin{equation}
        \langle \alpha, \beta \rangle = \begin{cases}
            1 & \text{for $\alpha = \beta$,} \\
            0 & \text{for $\alpha \neq \beta$.}
        \end{cases}
    \end{equation}
    Therefore, \begin{equation}\label{star_characterization}
        \alpha \wedge *\beta = \langle \alpha, \beta \rangle \operatorname{Vol} \qquad \text{for all } \alpha, \beta \in \{e_i^* \wedge e_j^* \mid 1 \leq i < j \leq 4 \}.
    \end{equation}
    By linearity, we find that this equality holds for all $\alpha, \beta \in \Lambda^2\R^4$. In particular, given $P \in G_2^+(\R^4)$ we have
    \begin{equation}\label{special_star_characterization}
        \alpha \wedge *\omega_P = \langle \alpha, \omega_P \rangle \operatorname{Vol} \qquad \text{for all } \alpha \in \Lambda^2\R^4.
    \end{equation}

    Now fix a 2-plane $P \in G_2^+(\R^4)$, let $(p_1, p_2)$ be a positively oriented orthonormal basis for $P$, and let $(p_3, p_4)$ be a positively oriented orthonormal basis for $P^\perp$. 

    Fix $\alpha \in \{ p_i^* \wedge p_j^* \mid 1 \leq i < j \leq 4 \}$. By Lemma~\ref{volume_form} we have
    \begin{equation}
        \alpha \wedge \omega_{P^\perp} = \begin{cases}
            \operatorname{Vol} & \text{for $\alpha = \omega_P$,} \\
            0 & \text{for $\alpha \neq \omega_P$,}
        \end{cases}
    \end{equation}
    and by \thref{inner_product} we have 
    \begin{equation}
        \langle \alpha, \omega_P \rangle = \begin{cases}
            1 & \text{for $\alpha = \omega_P$,} \\
            0 & \text{for $\alpha \neq \omega_P$.}
        \end{cases}
    \end{equation}
    Therefore, \begin{equation}\label{perp_characterization}
        \alpha \wedge \omega_{P^\perp} = \langle \alpha, \omega_P \rangle \operatorname{Vol} \qquad \text{for all } \alpha \in \{ p_i^* \wedge p_j^* \mid 1 \leq i < j \leq 4 \}.
    \end{equation}
    By linearity, we find that this equality holds for all $\alpha \in \Lambda^2\R^4$: \begin{equation}\label{special_perp_characterization}
        \alpha \wedge \omega_{P^\perp} = \langle \alpha, \omega_P \rangle \operatorname{Vol} \qquad \text{for all } \alpha \in \Lambda^2\R^4.
    \end{equation}
    Applying Lemma~\ref{unique_star_char} to \eqref{special_star_characterization} and \eqref{special_perp_characterization}, we find that $*\omega_P = \omega_{P^\perp}$ for all $P \in G_2^+(\R^4)$, and so our map does satisfy the desired property.
\end{proof}

\begin{proposition}
    The map given by Proposition \textnormal{\ref{hodge_star_existence}} is unique.
\end{proposition}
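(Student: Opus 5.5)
The plan is to show that any two linear maps $*_1, *_2 : \Lambda^2\R^4 \to \Lambda^2\R^4$ satisfying $*\omega_P = \omega_{P^\perp}$ for every $P \in G_2^+(\R^4)$ must coincide. Uniqueness should follow from the fact that a linear map is determined by its values on a spanning set, and the defining property pins down $*$ on all of $\im(\omega)$.

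First, observe that for every $P \in G_2^+(\R^4)$ we have $*_1\omega_P = \omega_{P^\perp} = *_2\omega_P$. The oriented plane $P^\perp$ is determined by $P$ alone, since its orientation is fixed by the rule in Proposition~\ref{hodge_star_existence}. Hence the linear map $*_1 - *_2$ vanishes on $\im(\omega)$.

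Second, invoke Proposition~\ref{basis_of_exterior}. It says that the elements $\omega_{\lspan(e_i,e_j)}$ with $i<j$, which all lie in $\im(\omega)$, form a basis for $\Lambda^2\R^4$. A linear map vanishing on a basis is zero, so $*_1 = *_2$.

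There is no real obstacle here. The only point needing care is that $\omega_{P^\perp}$ is unambiguous, i.e.\ that the orientation of $P^\perp$ does not depend on the choice of positive bases $(p_1,p_2)$ and $(p_3,p_4)$. This holds because changing either basis within its orientation class multiplies $\det[p_1\,p_2\,p_3\,p_4]$ by a positive number. The argument is the exact analogue of the uniqueness proof for the inner product given earlier.
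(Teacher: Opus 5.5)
Your proposal is correct and follows the paper's own argument. Both proofs note that any two candidate maps agree on $\im(\omega)$, which by Proposition~\ref{basis_of_exterior} contains a basis of $\Lambda^2\R^4$, so the maps coincide; your remark that the orientation of $P^\perp$ is well defined is extra care the paper leaves implicit.
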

\begin{proof}
    Recall that to specify a linear map, it's enough to specify where the map sends elements of a basis. We know where $*$ sends vectors in $\im(\omega)$, so to prove uniqueness, it suffices to show that $\im(\omega)$ contains a basis for $\Lambda^2\R^4$. This is true by Proposition~\ref{basis_of_exterior}.
\end{proof}

\subsection{Darboux normal form}
As we observed in \thref{omega}, the map $\R^4 \times \R^4\to \R$ which takes a pair of vectors to the signed area of the parallelogram spanned by their projection onto some oriented 2-plane is an example of an alternating bilinear map in $\Lambda^2\R^4$. However, not all maps in $\Lambda^2\R^4$ arise in this way. Our goal in this section is to prove Proposition~\ref{darboux}, which tells us that to express any alternating bilinear map in $\Lambda^2\R^4$, we only need to take a linear combination of at most two of these projection maps. 

\begin{lemma}\thlabel{gram_schmidt}
    Let $\alpha, \beta \in \Lambda^1\R^4$, and let $\varphi: \R^4 \to \Lambda^1\R^4$ be the canonical map to the dual which sends each vector $x$ to the map $[w \mapsto \langle w, x\rangle]$. Then $\alpha \wedge \beta = a\omega_{\varphi^{-1}(\lspan(\alpha,\beta))}$ for some $a \in \R$.
\end{lemma}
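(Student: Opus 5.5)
Write $x = \varphi^{-1}(\alpha)$ and $y = \varphi^{-1}(\beta)$, so that $\alpha = x^*$ and $\beta = y^*$. The plan is to run Gram--Schmidt on $(x,y)$ and use bilinearity and alternation of $\wedge$ to pull out a scalar.

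First I dispose of the degenerate case. If $x$ and $y$ are linearly dependent, then one is a multiple of the other, and $\alpha \wedge \beta = 0$ by alternation, since $\gamma \wedge \gamma = 0$ follows directly from the determinant formula. Then $a = 0$ works, provided we agree that $\lspan(\alpha,\beta)$ is read as any 2-plane containing it; the statement is only meaningful up to this convention. So I assume $x$ and $y$ are linearly independent, and then $P = \lspan(x,y)$ is a genuine 2-plane, oriented by the ordered basis $(x,y)$.

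Next I apply Gram--Schmidt. Set $u = x/\|x\|$ and let $y' = y - \langle y,u\rangle u$, which is nonzero by independence. Then $v = y'/\|y'\|$, and $(u,v)$ is an orthonormal basis of $P$. It is positively oriented relative to $(x,y)$, because the change of basis is upper triangular with positive diagonal entries $\|x\|$ and $\|y'\|$.

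Since $w \mapsto w^*$ is linear, we have $x^* = \|x\|\, u^*$ and $y^* = \langle y,u\rangle u^* + \|y'\|\, v^*$. Bilinearity of $\wedge$, which is immediate from its determinant definition, together with $u^* \wedge u^* = 0$, gives
\[
\alpha \wedge \beta = \|x\|\,\|y'\|\; u^* \wedge v^* = \|x\|\,\|y'\|\,\omega_P .
\]
This uses \thref{omega}, by which $\omega_P = u^* \wedge v^*$ for a positive orthonormal basis $(u,v)$ of $P$. Finally, $\varphi^{-1}(\lspan(\alpha,\beta)) = \lspan(x,y) = P$, because $\varphi$ is a linear isomorphism.

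The only real subtlety is the orientation and degenerate-case bookkeeping. We want $\omega$ of the plane with the orientation induced by $(\alpha,\beta)$. If the plane were oriented the other way, this would just flip the sign of $a$, so the statement holds regardless.
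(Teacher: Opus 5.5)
Your proof is correct and follows essentially the same route as the paper: you apply Gram--Schmidt to the pair and use the shear and scaling invariance of $\wedge$. The only cosmetic difference is that you orthonormalize $x,y$ in $\R^4$ and transport via linearity of $\varphi$, whereas the paper orthonormalizes $\alpha,\beta$ directly in $\Lambda^1\R^4$ after declaring $\varphi$ an isometry. Your explicit handling of the dependent case, the orientation check via the triangular change of basis, and the explicit constant $a = \|x\|\,\|y'\|$ fill in details the paper leaves implicit.
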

\begin{proof}
    We equip $\Lambda^1\R^4$ with its natural inner product by declaring the map $\varphi$ to be orthogonal. We notice from the definition of the wedge product that the following two identities hold for all $c \in \R$:
    \begin{enumerate}
        \item $\alpha \wedge (\beta + c\alpha) = \alpha \wedge \beta$, 
        \item $\alpha \wedge c\beta = c(\alpha \wedge \beta)$.
    \end{enumerate}
    Therefore, we can perform Gram-Schmidt orthogonalization on the pair $\alpha, \beta \in \Lambda^1\R^4$ to get an orthonormal pair $\alpha', \beta' \in \Lambda^1\R^4$ for which $\alpha \wedge \beta$ is some real-number multiple of $\alpha' \wedge \beta'$. By \thref{omega} we have 
    \begin{align}
        \alpha' \wedge \beta' &= \omega_{\lspan(\varphi^{-1}(\alpha'), \varphi^{-1}(\beta'))} \\ &= \omega_{\varphi^{-1}(\lspan(\alpha', \beta'))} \\ &= \omega_{\varphi^{-1}(\lspan(\alpha, \beta))}.
    \end{align}
    This completes the proof.
\end{proof}

\begin{proposition}[Darboux normal form]\thlabel{darboux}
    Every alternating bilinear map in $\Lambda^2\R^4$ may be written in the form $a\omega_P + b \omega_Q$, where $P, Q \in G_2^+(\R^4)$ are planes which intersect trivially and $a,b \in \R$ are constants.
\end{proposition}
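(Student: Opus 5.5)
Since $\alpha\in\Lambda^2\R^4$ is an alternating bilinear form, we can represent it by a skew-symmetric matrix $A$ via $\alpha(x,y)=\langle Ax,y\rangle$ (or the transpose convention; the sign does not matter for the argument). The plan is to use the spectral theory of real skew-symmetric matrices to find an orthonormal basis $(p_1,p_2,p_3,p_4)$ of $\R^4$ in which $A$ is block diagonal with $2\times 2$ blocks $\begin{bmatrix}0&-a\\ a&0\end{bmatrix}$ and $\begin{bmatrix}0&-b\\ b&0\end{bmatrix}$. Then $\alpha=a\,p_1^*\wedge p_2^* + b\,p_3^*\wedge p_4^*$, and by \thref{omega} this is $a\omega_P+b\omega_Q$ with $P=\lspan(p_1,p_2)$ and $Q=\lspan(p_3,p_4)$. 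These are orthogonal complementary planes, so they intersect trivially.

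The key step is the normal form, which I would prove in the style of \thref{orthogonalDecomposition}. The matrix $A^2=-A^TA$ is symmetric and negative semidefinite, so it has an orthonormal eigenbasis. Pick a unit eigenvector $p_1$ of $A^2$ with eigenvalue $-a^2$, where $a\ge 0$.
\begin{itemize}
\item If $a>0$, set $p_2=Ap_1/a$. This is a unit vector, since $\|Ap_1\|^2=-\langle A^2p_1,p_1\rangle=a^2$, and it is orthogonal to $p_1$ by skew-symmetry. Moreover $Ap_2=A^2p_1/a=-ap_1$, so $W=\lspan(p_1,p_2)$ is $A$-invariant.
\item If $a=0$, then $Ap_1=0$. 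Choose any unit $p_2\perp p_1$ that is also an eigenvector of $A^2$ inside $p_1^\perp$; alternatively, note that $\ker A$ is then nontrivial and even-dimensional, since the rank of a skew matrix is even, and take $W$ to be a 2-plane inside it.
\end{itemize}
In either case, skew-symmetry gives that $W^\perp$ is also $A$-invariant. Repeating the argument on $W^\perp$ produces $p_3,p_4$ and $b$.

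Finally, we adjust orientations. We can take $p_1,p_2$ to be any positively oriented basis of $P$ by flipping the sign of $a$ if needed, and similarly for $Q$. Then we check $\alpha(p_i,p_j)$ against $a\,p_1^*\wedge p_2^*+b\,p_3^*\wedge p_4^*$ on basis pairs and invoke the expansion from Proposition~\ref{basis_of_exterior} (in the basis $p_i$, which is also orthonormal) to conclude that they are equal.

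The main obstacle I expect is the degenerate eigenvalue case and the bookkeeping needed to make the invariant plane 2-dimensional when $Ap_1=0$; using the evenness of the rank handles this cleanly. An alternative, suggested by the paper's preceding \thref{gram_schmidt}, is to write $\alpha=\sum c_{ij}e_i^*\wedge e_j^*$, use $\alpha\wedge\alpha$ to decide whether $\alpha$ is decomposable, and reduce pairwise. The spectral route seems more direct, though.
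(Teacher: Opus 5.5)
Your argument is correct, but it takes a different route from the paper.

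The paper encodes $\alpha$ as the skew matrix $(\alpha(e_i,e_j))$ and reduces it by elementary congruence operations $A\mapsto Q^TAQ$, with $Q$ merely invertible. This lands on one of the rank normal forms $B_0,B_1,B_2$; the paper only asserts that this step is ``not hard to see.'' The resulting basis $\mathcal{F}$ is not orthonormal, so $\alpha=\varphi_{\mathcal F}(f_1)\wedge\varphi_{\mathcal F}(f_2)+\varphi_{\mathcal F}(f_3)\wedge\varphi_{\mathcal F}(f_4)$ is a sum of wedges of non-orthonormal covectors. The paper therefore needs \thref{gram_schmidt} to turn each wedge into a multiple of some $\omega_P$. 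It gets trivial intersection of $P$ and $Q$ from the linear isomorphism $\varphi_{\mathcal E}^{-1}\circ\varphi_{\mathcal F}$, and its $P$ and $Q$ are in general not orthogonal.

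You work up to orthogonal similarity instead, via the spectral theorem for the symmetric matrix $A^2=-A^TA$. That costs you the spectral theorem, but it buys several things:
\begin{enumerate}
\item It produces an orthonormal basis directly, so $a\,p_1^*\wedge p_2^*+b\,p_3^*\wedge p_4^*$ is literally $a\omega_P+b\omega_Q$ by \thref{omega}, with no appeal to \thref{gram_schmidt}.
\item It actually proves the normal-form step that the paper leaves to the reader.
\item It gives the stronger conclusion $Q=P^\perp$.
\end{enumerate}
Orientations are a non-issue: you are free to orient $P$ by $(p_1,p_2)$ and $Q$ by $(p_3,p_4)$.

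One correction to your degenerate bullet: the first option fails as stated. If $p_2\perp p_1$ is an eigenvector of $A^2$ with nonzero eigenvalue, then $Ap_2\neq 0$ and $\langle Ap_2,p_1\rangle=-\langle p_2,Ap_1\rangle=0$. So $Ap_2\notin\lspan(p_1,p_2)$, and that plane is not $A$-invariant. Your second option, taking a 2-plane inside $\ker A$ using evenness of the rank, is fine.

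In fact you can bypass the degenerate case entirely. If $A\neq 0$ then $A^TA\neq 0$, so you may choose $p_1$ with $a>0$. Then $W^\perp$ is a 2-dimensional $A$-invariant subspace. Any skew-symmetric map of a 2-dimensional inner product space has matrix $\begin{bmatrix}0&-b\\ b&0\end{bmatrix}$ in any orthonormal basis $(p_3,p_4)$, for some $b\in\R$. So no second iteration is needed. If $A=0$, take $a=b=0$ with any orthonormal basis.
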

\begin{proof}
    Let $\mathcal{E} = \{e_1, e_2, e_3, e_4\} \subseteq \R^4$ be the standard basis, and let $\alpha: \R^4 \times \R^4 \to \R$ be an alternating bilinear map. Given a vector 
    \begin{equation}
        u = \begin{bmatrix}
            u^1 \\ u^2 \\ u^3 \\ u^4
        \end{bmatrix} \in \R^4,
    \end{equation}
    we write $u_\mathcal{E}$ as shorthand for $\sum_{i = 1}^4 u^i e_i$.
    
    We may characterize $\alpha$ in the basis $\mathcal{E}$ as follows: Let $A$ be the $4 \times 4$ matrix whose $(i,j)^\text{th}$ entry is $\alpha(e_i, e_j)$. Then for all $u,v \in \R^4$, we have 
    \begin{equation}
        \alpha(u_\mathcal{E}, v_\mathcal{E}) = u^T A v.
    \end{equation}
    Since $\alpha$ is alternating, $A$ must be skew-symmetric. Now suppose we had another basis $\mathcal{F} = \{ f_1, f_2, f_3, f_4 \}$ with $Q$ being the change of basis matrix from $\mathcal{E}$ to $\mathcal{F}$ (that is, $f_j = \sum_i Q_{ij} e_i$). We may characterize $\alpha$ in the basis $\mathcal{F}$ as follows:
    \begin{align}
        \alpha(u_\mathcal{F}, v_\mathcal{F}) &= (Qu)^T A (Qv) \\
        &= u^T(Q^TAQ) v.
    \end{align}
    If we view right-multiplication by $Q$ as performing a sequence of elementary column operations on $A$, then left-multiplication by $Q^T$ has the effect of performing the corresponding sequence of elementary row operations. We call the action of performing an elementary column operation and then the corresponding elementary row operation an \emph{elementary congruence operation}. It's not hard to see that any $4 \times 4$ skew-symmetric matrix may be taken to one of the following three forms via a finite sequence of elementary congruence operations:
    \begin{equation}
        B_0 = \begin{bmatrix}
            0 & 0 & 0 & 0 \\ 
            0 & 0 & 0 & 0 \\
            0 & 0 & 0 & 0 \\ 
            0 & 0 & 0 & 0
        \end{bmatrix}, \quad
        B_1 = \begin{bmatrix}
            0 & 1 & 0 & 0 \\ 
            -1 & 0 & 0 & 0 \\
            0 & 0 & 0 & 0 \\ 
            0 & 0 & 0 & 0
        \end{bmatrix}, \quad
        B_2 = \begin{bmatrix}
            0 & 1 & 0 & 0 \\ 
            -1 & 0 & 0 & 0 \\
            0 & 0 & 0 & 1 \\ 
            0 & 0 & -1 & 0
        \end{bmatrix}.
    \end{equation}
    Therefore, there exists a choice of invertible matrix $Q$ for which $Q^T A Q = B_i$ for some $i \in \{0,1,2\}$, which implies there exists a choice of $\mathcal{F}$ for which 
    \begin{equation}
        \alpha(u_\mathcal{F}, v_\mathcal{F}) = u^TB_i v \qquad \text{for all $u,v \in \R^4$.}
    \end{equation}
    We now split into cases:
    \begin{itemize}
        \item If $i = 0$ then $\alpha = 0$. So we can set $a = b = 0$, and we're done.
        \item If $i = 1$ then we define $\varphi_\mathcal{E},\varphi_\mathcal{F}: \R^4 \to \Lambda^1\R^4$ to be the canonical maps to the dual associated with the bases $\mathcal{E}$ and $\mathcal{F}$, respectively. In other words, $\varphi_\mathcal{E}(e_i)(e_j) = \varphi_{\mathcal{F}}(f_i)(f_j) = \delta_{ij}$.

        The equation $\alpha(u_\mathcal{F}, v_\mathcal{F}) = u^T B_1 v$ tells us that
        \begin{equation}
            \alpha(f_i, f_j) = \begin{cases}
                1 & \text{for $(i,j) = (1,2)$,} \\
                -1 & \text{for $(i,j) = (2,1)$,} \\
                0 & \text{otherwise.}
            \end{cases}
        \end{equation}
        Therefore, it's clear that $\alpha = \varphi_\mathcal{F}(f_1) \wedge \varphi_\mathcal{F}(f_2)$. By Lemma~\ref{gram_schmidt} we have $\alpha = a \omega_P$, where $a$ is a real number and
        \begin{align}
            P &= \varphi_{\mathcal{E}}^{-1}(\lspan(\varphi_\mathcal{F}(f_1) , \varphi_\mathcal{F}(f_2))) \\
            &= \varphi_\mathcal{E}^{-1} \circ \varphi_\mathcal{F}(\lspan(f_1, f_2)).
        \end{align}
        Setting $b = 0$, we are done.
        \item If $i = 2$ then we find that $\alpha = \varphi_\mathcal{F}(f_1) \wedge \varphi_\mathcal{F}(f_2) + \varphi_\mathcal{F}(f_3) \wedge \varphi_\mathcal{F}(f_4)$. By the same argument as in the $i = 1$ case, we may rewrite this as $\alpha = a \omega_P + b \omega_Q$, where $P = \varphi_\mathcal{E}^{-1} \circ \varphi_\mathcal{F}(\lspan(f_1, f_2))$ and $Q = \varphi_\mathcal{E}^{-1} \circ \varphi_\mathcal{F}(\lspan(f_3, f_4))$. Since $\lspan(f_1, f_2)$ and $\lspan(f_3, f_4)$ intersect trivially, we conclude that $P$ and $Q$ intersect trivially, and we are done.
    \end{itemize}
\end{proof}

\end{document}